\documentclass[12pt,twoside,final]{article}

\usepackage{chngpage}
\usepackage[a4paper,left=22mm,right=22mm,top=30mm,bottom=30mm]{geometry}
\usepackage{imakeidx}
\usepackage{xr-hyper}
\usepackage{xr}
\usepackage[all]{xy}
\usepackage[center]{titlesec}
\titleformat*{\section}{\large\bfseries}
\usepackage{enumerate,cite}
\usepackage[latin1]{inputenc}
\usepackage{amssymb}
\usepackage{mathbbold}
\usepackage{amsthm}
\usepackage[leqno]{amsmath}
\usepackage{amsfonts}
\usepackage{mathrsfs}
\usepackage{hyperref}
\usepackage{color}
\usepackage{graphicx}
\usepackage{setspace}
\usepackage{bbm}
\usepackage{enumitem}
\usepackage{booktabs}
\usepackage[capitalize]{cleveref}
\usepackage{tikz}
\usetikzlibrary{matrix,chains}
\usepackage[center]{titlesec}
\titleformat*{\section}{\normalsize\bfseries}
\usepackage{indentfirst}
\usepackage{amscd}
\usepackage{hyperref}
\usepackage{mathbbold}
\theoremstyle{remark}

\theoremstyle{definition}

\theoremstyle{plain}
\newtheorem{thm}{Theorem}[section]

\newtheorem{lem}[thm]{Lemma}

\newtheorem{cor}[thm]{Corollary}

\newtheorem{prop}[thm]{Proposition}
\newtheorem{rmk}[thm]{Remark}
\newtheorem{order}[thm]{}

\newtheorem{conj*}{Conjecture}
\numberwithin{equation}{thm}

\usepackage{lastpage}
\usepackage{fancyhdr}

\begin{document}
\begin{center}{\Large\bf
Extensions of inertial blocks}

\bigskip{Kun Zhang}

\smallskip{\scriptsize Faculty of Mathematics and Statistics,
Hubei University, Wuhan, 430062, P.R. China }

\bigskip{Yuanyang
Zhou}

\smallskip{\scriptsize Department of Mathematics and
Statistics, Central China Normal University,
Wuhan, 430079, P.R. China}
\end{center}

\bigskip\noindent{\bf Abstract}\quad In this paper, with suitable assumptions, we generalize the work of K\"ulshammer and Puig on extensions of nilpotent blocks to inertial blocks.



\section  {Introduction}

\begin{order} {\rm Throughout this paper, $p$ is a prime number and ${\cal O}$ is a complete discrete valuation ring with an algebraically closed
residue field $k$ of characteristic $p$. Let $H$ be a finite group, $b$ a block of $H$
over ${\cal O}$ and $Q$ a defect group of the block $b$.
Then ${\cal O} Hb$ is the corresponding block algebra. Set ${\mathbbm H} =N_H(Q)$ and let ${\mathbbm b}$ be the Brauer
correspondent of $b$ in ${\mathbbm H}$. Let $P$ be a maximal $p$-subgroup of $G$ such
that ${\rm Br}_P^{{\cal O} H}(b)\neq 0$ (see paragraph \ref{Notation7} for ${\rm Br}_P^{{\cal O} H}$). By \cite[Proposition 5.3]{KP},
the intersection $P\cap H$ is a defect group of $b$ in $H$. So we may
assume $Q=P\cap H$. Let $G$ be a finite group containing $H$ as a normal subgroup. We always assume that $G$ stabilizes the block $b$ by conjugation.
Then ${\cal O} Gb$ is an ${\cal O}$-algebra with the identity $b$ and has an obvious $G/H$-graded algebra structure; we call it the extension of the block $b$. Block extensions
have been studied extensively and play an important role in representation theory of finite groups.}
\end{order}

\begin{order}{\rm
Assuming that the block $b$ of $H$ is nilpotent, K\"ulshammer and Puig characterized the algebraic structure of the block extension ${\cal O} Gb$ (see \cite[Theorems 1.8 and 1.12]{KP}).  Later Puig and the second author strengthened \cite[Theorem 1.12]{KP}
to get \cite[Corollary 3.15]{PZ2}, by which, they concluded that when the block $b$ is nilpotent,
there is an ${\cal O} K$-module such that its restriction to $H\times {\mathbbm H}$ induces a basic Morita equivalence between ${\cal O} Hb$ and ${\cal O} {\mathbbm H}  {\mathbbm b}$.
Here, setting ${\mathbbm G}=N_G(Q)$, then $K$ is equal to $(H\times{\mathbbm H})\Delta({\mathbbm G})$, where $\Delta({\mathbbm G})$ is the diagonal subgroup of ${\mathbbm G}\times {\mathbbm G}$.
In this paper, we try to generalize the results above to inertial blocks (see \cite{P11}), with some assumptions.}
\end{order}

\begin{thm}\label{Main1} Assume that $P$ is abelian and that $b$ as a block of $PH$ is inertial. Then there is an ${\cal O} K$-module $M$ such that its restriction
${\rm Res}^K_{H\times {\mathbbm H}  }(M)$ induces a basic Morita equivalence between ${\cal O} Hb$ and ${\cal O} {\mathbbm H}  {\mathbbm b}$; moreover the induced module ${\rm Ind}_K^{G\times {\mathbbm G} }(M)$ induces a Morita equivalence between ${\cal O} G b$ and ${\cal O} {\mathbbm G} {\mathbbm b}$ and any indecomposable direct summand of ${\rm Ind}_K^{G\times {\mathbbm G} }(M)$ induces a basic Morita equivalence between the corresponding block algebras.
\end{thm}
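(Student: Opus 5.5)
We would follow the strategy of Külshammer--Puig and Puig--Zhou for nilpotent blocks, replacing the nilpotent structure theorem by Puig's structure theorem for inertial blocks \cite{P11}.

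\emph{Step 1: the $PH$-level.} Since $P$ is abelian and $b$ is inertial as a block of $PH$ with defect group $P$, there is a basic Morita equivalence between ${\cal O}(PH)b$ and ${\cal O}N_{PH}(P)e$, where $e$ is the Brauer correspondent. It is realized by a $p$-permutation bimodule $N$ with vertex $\Delta(P)$ that is indecomposable as an ${\cal O}(PH\times N_{PH}(P))$-module. We restrict $N$ to $H\times {\mathbbm H}$. Because $Q=P\cap H$ and $P$ is abelian, the local structure of $b$ over $H$ is controlled by the fusion inside $PH$. We then check, using the argument of \cite[Proposition 5.3]{KP} together with the fact that $PH/H$ is a $p$-group, that this restriction still induces a basic Morita equivalence between ${\cal O}Hb$ and ${\cal O}{\mathbbm H}{\mathbbm b}$. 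Restricting along the normal subgroup of $p$-power index should preserve indecomposability of the relevant summand, and the source algebra of $b$ in $H$ should embed in that of $b$ in $PH$.

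\emph{Step 2: extending to $K$.} This is the main obstacle. We want an ${\cal O}K$-module $M$, with $K=(H\times{\mathbbm H})\Delta({\mathbbm G})$, whose restriction to $H\times {\mathbbm H}$ is the module $M_0$ from Step 1. First, $M_0$ is $K$-stable up to isomorphism. Indeed, ${\mathbbm G}$ acts on the block pair and stabilizes $Q$, and by Puig's uniqueness of the basic Morita equivalence attached to a source algebra embedding, conjugation by $\Delta(x)$ sends $M_0$ to an isomorphic bimodule.

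The obstruction to extending $M_0$ is a $2$-cocycle of $K/(H\times{\mathbbm H})\cong {\mathbbm G}/{\mathbbm H}\cong G/H$ with values in $k^*$ (lifted to ${\cal O}^*$). We plan to kill it as in \cite[Corollary 3.15]{PZ2}. Both sides have isomorphic source algebras with compatible $N_G(Q_\delta)$-actions, so the Clifford extensions of ${\cal O}Gb$ and ${\cal O}{\mathbbm G}{\mathbbm b}$ over $G/H$ coincide: they are the same central extension of $N_G(Q_\delta)/N_H(Q_\delta)$ by $k^*$. Hence the cocycle for the bimodule is the difference of two equal classes and is trivial. For this we need the abelian hypothesis, so that the inertial quotient $E$ is a $p'$-group acting on $P$ and the source algebra is ${\cal O}_\ast(P\rtimes E)$ twisted by a cocycle. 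Then the extension of the $E$-action to ${\mathbbm G}$ can be compared directly, and the stabilizer computations of \cite{KP} (Theorems 1.8, 1.12) apply verbatim. The remaining choice of extension is fixed up to a linear character of $G/H$, which is harmless.

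\emph{Step 3: induction.} Put $X={\rm Ind}_K^{G\times{\mathbbm G}}(M)$. Since $G\times{\mathbbm G}=K\cdot(G\times 1)$ and $K\cap(G\times1)=H\times 1$, Mackey gives $X\cong {\cal O}G\otimes_{{\cal O}H}M_0\otimes_{{\cal O}{\mathbbm H}}{\cal O}{\mathbbm G}$ as bimodules. We then verify
\[
X\otimes_{{\cal O}{\mathbbm G}}X^*\cong {\cal O}Gb, \qquad X^*\otimes_{{\cal O}Gb}X\cong{\cal O}{\mathbbm G}{\mathbbm b}.
\]
This follows from the corresponding isomorphisms for $M_0$ together with the $\Delta({\mathbbm G})$-equivariance of the Morita isomorphisms, which holds because $M$ is a $K$-module. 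This is the standard graded Morita argument (cf. Marcus, Dade).

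Finally, the block decompositions of $Gb$ and ${\mathbbm G}{\mathbbm b}$ correspond under $X$, so each indecomposable summand of $X$ is $X$ cut by a pair of corresponding block idempotents and induces a Morita equivalence between those blocks. Each such summand has vertex $\Delta(P')$ for a defect group $P'$ of the corresponding block and trivial source, being a summand of a module induced from $M$, which has trivial source. Therefore these equivalences are basic.
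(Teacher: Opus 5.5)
There is a genuine gap, and it sits exactly at the step you flag as the main obstacle. In Step~2 you reduce extending $M_0$ from $H\times{\mathbbm H}$ to $K$ to the vanishing of a $k^*$-valued $2$-cocycle on $G/H$. You then kill that cocycle by asserting that both sides ``have isomorphic source algebras with compatible $N_G(Q_\delta)$-actions''. Neither half survives.

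\emph{The obstruction is not $k^*$-valued.} Since $M_0$ induces a Morita equivalence, ${\rm End}_{{\cal O}(H\times{\mathbbm H})}(M_0)\cong Z({\cal O}Hb)$, a local ring usually much bigger than ${\cal O}$. So the obstruction is a class in $H^2(G/H,Z({\cal O}Hb)^*)$. Its component in $1+J(Z({\cal O}Hb))$ does not vanish for free, because $G/H$ contains $PH/H\cong P/Q$ and is not a $p'$-group.

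\emph{The graded algebra is not determined by the data you invoke.} When $G/H$ is not a $p'$-group, ${\cal O}Gb$ is not determined by its $1$-component, an outer action and a central $k^*$-extension of $G/H$. Already for nilpotent $b$, the K\"ulshammer--Puig answer involves the $k^*$-group $\hat{\cal L}$, where ${\cal L}$ is a possibly non-split extension of $N_G(Q_\delta)/C_H(Q)$ by $Q$. Moreover, K\"ulshammer--Puig's Theorems~1.8 and~1.12 cannot be applied ``verbatim'', because $b$ is not nilpotent over $H$.

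What you take for granted is precisely the paper's main technical result, Theorem~\ref{Iso-G-ga}: ${\cal A}_\gamma\cong S\otimes_{\cal O}{\cal O}_*\hat{\cal L}^\circ$ as $\dot G$-graded $P$-interior algebras, to be set against the local isomorphism~\ref{eq:3.10.1}. Proving it takes three stages:
\begin{itemize}
\item the graded structure theorem for the $p$-extension $PH$ (Section~4), resting on the $k^*$-group isomorphism $\hat E_H(Q_\delta)\cong\hat E_{PH}(P_{\tilde\gamma})$ of Lemma~\ref{k*-Iso};
\item the lifting Lemma~\ref{L-M}, which embeds ${\cal L}$ in ${\cal N}_{\hat{\cal A}_{\hat\delta}^*}(Q)/k^*$ by lifting layer by layer through the $p'$-divisible quotients of $1+J(\hat{\cal B}_{\hat\delta}^Q)$;
\item the identification $\tilde{\cal L}\cong\hat{\cal L}^\circ$.
\end{itemize}
Once both graded source algebras carry the common factor ${\cal O}_*\hat{\cal L}^\circ$, the paper writes $M$ and $D$ down explicitly. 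It then checks by direct computation that $\Delta(N_G(Q_\delta))$ stabilizes $M\subset D$; no cohomology vanishing is used.

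Step~1 does not produce $M_0$ either. A bimodule for $PH\times N_{PH}(P)$ cannot be restricted to $H\times{\mathbbm H}$, since ${\mathbbm H}=N_H(Q)$ is in general not contained in $N_{PH}(P)=P\,N_H(P)$. Even on $H\times N_H(P)$, a Morita equivalence between $P/Q$-graded algebras restricts to one between $1$-components only when the bimodule is already induced from a diagonal-type subgroup, which is the very statement under proof. That $b$ is inertial over $H$ is something that has to be proved; in the paper it drops out of the graded isomorphism~\ref{eq:5.1.1} on $1$-components.

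There are also two smaller errors.
\begin{itemize}
\item Basic Morita equivalences have endopermutation sources, which need not be trivial even for abelian defect groups. For example, the non-principal $3$-block of $SL(2,3)$ is nilpotent with a Dade algebra of rank $4$. So neither ``$p$-permutation'' in Step~1 nor ``trivial source'' in Step~3 is available; here the sources come from $V$ and ${\mathbbm V}$.
\item Mackey gives ${\rm Ind}_K^{G\times{\mathbbm G}}(M)\cong{\cal O}G\otimes_{{\cal O}H}M$, of ${\cal O}$-rank $|G/H|\,{\rm rk}_{\cal O}(M)$. It does not give ${\cal O}G\otimes_{{\cal O}H}M_0\otimes_{{\cal O}{\mathbbm H}}{\cal O}{\mathbbm G}={\rm Ind}_{H\times{\mathbbm H}}^{G\times{\mathbbm G}}(M_0)$, whose rank is $|G/H|^2\,{\rm rk}_{\cal O}(M_0)$.
\end{itemize}
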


\begin{cor}\label{Main2}
Keep the notation as above and assume that $P$ is abelian and that  the block $b$  is inertial as a block of $PH$.
Let $N$ be a normal subgroup of $G$ containing $H$ such that $G=PN$ and $N/H$ is a $p'$-group.
Then the blocks of $G$ covering $b$ are inertial.
\end{cor}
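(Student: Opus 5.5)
Since $P$ is abelian and $G=PN$ with $N/H$ a $p'$-group, a defect group of any block $e$ of $G$ covering $b$ can be taken to be $P$ (up to conjugation), because $P$ is a maximal $p$-subgroup with ${\rm Br}_P^{{\cal O} H}(b)\neq 0$. The plan is to show that ${\cal O} Ge$ is basic Morita equivalent to the corresponding Brauer correspondent block of $N_G(P)$. Once we have that, the inertiality of $e$ follows from the definition in \cite{P11}: an inertial block is one that is basic Morita equivalent to its Brauer correspondent in the normalizer of a defect group. Since $P$ is abelian, the normalizer of the defect group of $e$ is $N_G(P)$. We pass through ${\mathbbm G}=N_G(Q)$ as an intermediate step.

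\emph{Step 1: from $G$ to ${\mathbbm G}$.} Apply Theorem~\ref{Main1}, whose hypotheses hold here. The module ${\rm Ind}_K^{G\times{\mathbbm G}}(M)$ induces a Morita equivalence between ${\cal O} Gb$ and ${\cal O}{\mathbbm G}{\mathbbm b}$, and each indecomposable summand gives a basic Morita equivalence between a block $e$ of $G$ covering $b$ and a block ${\mathbbm e}$ of ${\mathbbm G}$ covering ${\mathbbm b}$. Basic Morita equivalences preserve defect groups up to the identification of source algebras, so ${\mathbbm e}$ also has defect group $P$. I would also check that ${\mathbbm e}$ is the Brauer correspondent of $e$ through $N_G(P)\subseteq{\mathbbm G}$. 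This should follow because the bimodule has vertex $\Delta(P)$ and a trivial-source type structure inherited from $M$, so it induces the Brauer correspondence via ${\rm Br}_{\Delta P}$. Note that $N_G(P)\subseteq N_G(Q)$ holds since $Q=P\cap H$ and $H$ is normal in $G$.

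\emph{Step 2: the block ${\mathbbm e}$ of ${\mathbbm G}$.} In ${\mathbbm G}$ the subgroup $Q$ is normal, ${\mathbbm H}=N_H(Q)$ is normal in ${\mathbbm G}$, and ${\mathbbm b}$ has the normal abelian defect group $Q$. Moreover ${\mathbbm G}=P\,N_N(Q)$ by a Frattini argument, with $N_N(Q)/{\mathbbm H}$ a $p'$-group. By Külshammer's structure theorem, a block with a normal defect group is a twisted group algebra. The covering block ${\mathbbm e}$ then has the normal subgroup $Q\le P$, and it is Morita equivalent to a twisted group algebra of $P\rtimes E$-type over $C_{\mathbbm G}(Q)$-data. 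I would reduce via $C_{\mathbbm G}(Q)$: the block ${\mathbbm b}$ is inertial by hypothesis (transported to ${\mathbbm H}$), and over $Q$ it is determined by a $p'$-group of automorphisms together with a nilpotent block of $C(Q)$. I would then show that ${\mathbbm e}$ is basic Morita equivalent to its Brauer correspondent in $N_{\mathbbm G}(P)=N_G(P)$. The approach is to apply the Külshammer--Puig theorem on extensions of nilpotent blocks \cite{KP} to the nilpotent block of $QC_{\mathbbm H}(Q)$ inside $N_{\mathbbm G}(Q,\text{block})$, and then use that the relevant quotient is a $p'$-extension of an abelian $p$-group.

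\emph{Step 3: composition.} Compose the basic Morita equivalences from Steps 1 and 2. A composition of basic Morita equivalences with vertices $\Delta P$ is again basic. This yields that ${\cal O} Ge$ is basic Morita equivalent to its Brauer correspondent in $N_G(P)$, so $e$ is inertial.

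The main obstacle is Step 2, namely controlling ${\mathbbm e}$ when $P$ strictly contains $Q$. The $p$-part $P/Q$ acts on the block of ${\mathbbm H}$, so one cannot simply quote the $p'$-extension case. My first attempt is to use the abelianness of $P$. In that case $P$ centralizes $Q$, so $P\le C_{\mathbbm G}(Q)$ and ${\mathbbm e}$ is controlled by $C_{\mathbbm G}(Q)$. The block of $C_{\mathbbm G}(Q)$ covering the nilpotent block of $QC_{\mathbbm H}(Q)$ is then an extension of a nilpotent block by $P/Q$ times a $p'$-group. By \cite[Theorem 1.12]{KP}, such an extension is nilpotent, and hence inertial after the $p'$-inertial quotient is reinstated.
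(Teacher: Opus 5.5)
\textbf{Gap: Step 2 is not proved.} Your outer frame matches the paper's. You use Theorem~\ref{Main1} to replace a block $c$ of $G$ covering $b$ by a basically Morita equivalent block $\mathbbm c$ of ${\mathbbm G}=N_G(Q)$, and then try to show $\mathbbm c$ is inertial. But Step~2, where all the content lies, is not carried out, and your one concrete argument relies on a false principle.

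\cite[Theorem 1.12]{KP} describes the source algebra of an extension of a nilpotent block. It does not say such extensions are nilpotent. In fact a $p'$-extension of a nilpotent block is in general only inertial, e.g.\ ${\cal O}C_3\subset{\cal O}S_3$ at $p=3$. Külshammer's theorem also does not apply to $\mathbbm c$ directly, since its defect group need not be normal in ${\mathbbm G}$.

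The paper works with the stabilizer $Q_\delta$ of $b_\delta$, not with $C_{\mathbbm G}(Q)$. This matters because $N_H(Q)/C_H(Q)$ need not be a $p'$-group; only $N_H(Q_\delta)/C_H(Q)$ is. The argument then has three separate stages:
\begin{enumerate}[label=(\roman*)]
\item $C_N(Q_\delta)/C_H(Q)$ is a $p'$-group. Hence the blocks of $C_N(Q_\delta)$ covering $b_\delta$ have $Q$ as a \emph{central} defect group, so they are nilpotent.
\item Since $P$ is abelian, $C_G(Q_\delta)=PC_N(Q_\delta)$ is a $p$-power index extension, and nilpotency survives (\cite[Theorem 8.12.2]{L}).
\item $N_G(Q_\delta)/C_G(Q_\delta)\cong N_N(Q_\delta)/C_N(Q_\delta)$ is a $p'$-group. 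Here one needs the genuine theorem that $p'$-extensions of nilpotent blocks are inertial (\cite[Corollary]{Z2}). The result is then passed from $N_G(Q_\delta)$ to $N_G(Q)$ by Clifford theory.
\end{enumerate}
Your phrase ``hence inertial after the $p'$-inertial quotient is reinstated'' stands in for (iii), which is precisely the nontrivial input. Your nilpotency claim for the block at the $C_{\mathbbm G}(Q)$ level is true here, but only because of (i) and (ii), and you do not supply either.

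\textbf{Second problem: the defect group need not be $P$.} Your opening assertion that every block of $G$ covering $b$ has defect group $P$ is false. Take $H=1$, $G=S_3$, $p=2$, $N=A_3$. All hypotheses hold and $P$ is a Sylow $2$-subgroup, yet $G$ has a block of defect zero covering $b$. So the target ``Brauer correspondent in $N_G(P)$'' in Steps~1 and~3 is wrong in general; the defect group is some $D$ with $Q\le D\le P$ up to conjugacy. Your plan to identify Brauer correspondents through the bimodule is also left unverified. The cleanest repair is the paper's: inertiality is preserved by basic Morita equivalence (via Puig's source-algebra description of inertial blocks). So once $\mathbbm c$ is inertial, $c$ is inertial, with no need to track defect groups or Brauer correspondents through the equivalence.
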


\section  {Notation and terminology}

\begin{order}\label{Notation1} {\rm Throughout this paper, all
${\cal O}$-modules are ${\cal O}$-free finitely generated; all ${\cal O}$-algebras
have identity elements, but their subalgebras need not be unity ones. Let $\cal A$ be an
${\cal O}$-algebra; we denote by ${\cal A}^\circ\,,$ $Z({\cal A})$, $J({\cal
A})$, $1_{\cal A}$, ${\cal A}^*$ and ${\rm Aut}({\cal A})$ the opposite ${\cal O}$-algebra of ${\cal A}\,,$ the center of~${\cal A}$, the radical of
${\cal A}$, the identity element of ${\cal A}$, the multiplicative group of ${\cal A}^*$ and the group of all ${\cal O}$-automorphisms of $\cal A$, respectively. Sometimes we write $1$ instead of $1_{\cal A}$ without confusion. Let
${{{\cal B}}}$ be an ${\cal O}$-algebra; a homomorphism ${\cal F}: {\cal
A}\rightarrow {{{\cal B}}}$ of ${\cal O}$-algebras is said to be an {embedding\/}
if ${\cal F}$ is injective and
 ${\cal F}({\cal A})={\cal F}(1_{\cal A}){{{\cal B}}}{\cal F}(1_{\cal
A})$. Let $X$ be a finite group, ${\cal C}$ be an $X$-graded algebra and ${\cal C}_x$ be the $x$-component of ${\cal C}$ for any $x\in X$. The tensor product ${\cal A}\otimes_{\cal O} {\cal C}$
is an $X$-graded algebra with the $x$-component
${\cal A}\otimes_{\cal O} {\cal C}_x$ for any $x\in X$.}
\end{order}

\begin{order}\label{Notation2}  {\rm Let $X$ and $Y$ be groups with $Y$ normal in $X$. An ${\cal O}$-algebra ${{{\cal B}}}$ is a $Y$-interior
$X$-algebra (see ${\cal O} Y$-interior $X$-algebra in \cite{FP}) if there are group
homomorphisms $\varphi: Y\,\rightarrow\,{{{\cal B}}}^* $ and $\psi:\,
X\,\rightarrow\,{\rm Aut}({{{\cal B}}}) $ such that $\varphi$ preserves
the action of $X$ on $Y$ and that of $X$ on ${{{\cal B}}}^*$ and such that $\psi$ lifts
the conjugation action of $Y$ on ${{{\cal B}}}$. For any $x, y\in
Y$, any $z\in Z$ and any $a\in {{{\cal B}}}$, we write
$\varphi(x)a\varphi(y)$ as $xay$ and $\psi(z^{-1})(a)$ as $a^z$.
We call ${{{\cal B}}}$ $X$-interior algebras when $X=Y$ and $X$-algebras when $X=1$. Note that $X$-interior algebras here are the same as interior $X$-algebras in \cite{P1} and as ${\cal O} X$-interior algebras in \cite{P3}. Let ${{{\cal B}}}'$ be another $Y$-interior
$X$-algebra. The tensor product ${\cal
B}\otimes_{\cal O} {{{{\cal B}}}}'$ is a $Y$-interior $X$-algebra with the group
homomorphism
$ Y\rightarrow ({{{{\cal B}}}}\otimes_{\cal O}
{{{{\cal B}}}}')^*,\,y\mapsto y1_{{{\cal B}}}\otimes y1_{{{{\cal B}}}'}$ and with the action of $X$ on ${{{\cal B}}}\otimes_{\cal O}
{{{\cal B}}}'$ defined by the equality
$(a\otimes a')^x=a^x\otimes a'^x$ for any $a\in {{{\cal B}}}$, any $a'\in {{{\cal B}}}'$ and any $x\in X$. An
${\cal O}$-algebra homomorphism ${\cal F}:{{{\cal B}}}\rightarrow {{{\cal B}}}'$ is said
to be a $Y$-interior $X$-algebra homomorphism (see \cite[2.4]{P3})
if for any $x, y\in Y$, any $z\in Z$ and any $a\in {{{\cal B}}}$, we have
${\cal F}(xay)=x{\cal F}(a)y$ and ${\cal
F}(a^z)={\cal F}(a)^z$.
We denote by ${{{{\cal B}}}'}^X$ the ${\cal O}$-subalgebra of all $X$-fixed
elements in ${{{{\cal B}}}'}$. For any $a\in {{{\cal B}}}'^X$, the $a$-conjugation induces a $Y$-interior $X$-algebra isomorphism ${\rm int}(a):{{{\cal B}}}'\cong {{{\cal B}}}'$. The exomorphism of $Y$-interior $X$-algebras determined by $F$ is the set $\{{\rm int}(a)\circ F|a\in {{{{\cal B}}}'}^X\}$. We
call $\cal F$ an $X$-interior algebra homomorphism when $X=Y$ and
an $X$-algebra homomorphism when $X=1$.}
\end{order}

\begin{order}\label{Notation3}{\rm Let $i$ be an $X$-fixed idempotent in $Y$-interior
$X$-algebra ${{{\cal B}}}$.
It is easy to check that $i{{{\cal B}}} i$ is a $Y$-interior $X$-algebra with
the group homomorphism $Y\rightarrow (i {{{\cal B}}} i)^*, \, y\mapsto yi $
and with the group homomorphism $X\rightarrow {\rm Aut}(i{{{\cal B}}} i)$
mapping $x$ onto the restriction of $\psi(x)$ to $i{{{{\cal B}}}} i$ for any
$x\in X$. Let $W$ be a subgroup of $X$. Clearly ${{{\cal B}}}$ is a
$(W\cap Y)$-interior $W$-algebra with the restrictions of $\varphi$
to $W\cap Y$ and of $\psi$ to $W$, denoted by ${\rm Res}^X_W({{{\cal B}}})$. Let
$\cal C$ be an $X$-interior algebra with the structure homomorphism
$\rho: X\rightarrow {\cal C}^*$. Let $\varrho: Z\rightarrow X$ be a
group homomorphism. The algebra ${\cal C}$ with the group
homomorphism $\rho\circ\varrho: Z\rightarrow {\cal C}^*$ is a
$Z$-interior algebra, denoted by ${\rm Res}_{\varrho}({\cal C})$.}
\end{order}

\begin{order} \label{Notation6}{\rm A $k^*$-group
is a group $\hat X$ with an injective group
homomorphism $\theta: k^*\rightarrow Z(\hat X)$, where $Z(\hat X)$ is the center of $\hat X$, and the quotient $\hat X/\theta(k^*)=X$ is called the $k^*$-quotient of $\hat X$; usually we omit to mention $\theta$ and write $\lambda x$ instead of $\theta(\lambda) x$ for any $\lambda\in k^*$ and any $x\in \hat X$. By \cite[Charpter II, Proposition 8]{S} there is a canonical decomposition ${\cal O}^*\cong k^*\times (1+J({\cal O}))$, through which we regard $k^*$ as a subgroup of ${\cal O}^*$. We define the twisted group algebra ${\cal O}_*\hat
X$ to be the ${\cal O}$-algebra ${\cal O}\otimes _{{\cal O} k^*} {\cal O} \hat X$ with the multiplication induced by the product in $\hat X$. Let $\hat U$ be another $k^*$-group with $k^*$-quotient
$U$. A group homomorphism $\phi: \hat X\rightarrow \hat U$ is a $k^*$-group
homomorphism if $\phi(\lambda x)=\lambda\phi(x)$
for any $\lambda\in k^*$ and $x\in \hat X$. We denote by $\hat V$ the inverse image of $V$ in $\hat X$ for any subset $V$ of $X$. If $V$ is a $p$-subgroup of $X$, by \cite[2.10]{P6} there is
a unique $k^*$-group isomorphism $\hat V\cong k^*\times V$. Therefore we can identify $V$ as a subgroup of $\hat X$ through this $k^*$-group isomorphism. We denote by $\hat X^\circ$ the $k^*$-group with the same underlying group  $\hat X$ endowed with the group homomorphism
$\theta^{-1}: k^*\rightarrow Z(\hat X),\, \lambda\mapsto \theta(\lambda)^{-1}$.
Let $\vartheta: Z\rightarrow X$ be a group homomorphism; we denote by ${\rm Res}_{\vartheta}(\hat X)$ the $k^*$-group formed by all pairs
$(y, x)\in Z\times \hat X$ such that $\vartheta(y)$ is the
image of $x$ in $X$, endowed with the group homomorphism mapping $\lambda\in k^*$ on $(1, \lambda)$. The $k^*$-group ${\rm Res}_{\vartheta}(\hat X)$ has $k^*$-quotient isomorphic to $Z$ and thus is a $k^*$-group with $k^*$-quotient $Z$. For more on
$k^*$-groups, see \cite[\S 5]{P6}.}
\end{order}

\begin{order} \label{Notation7}{\rm Let ${{{\cal B}}}$ be an
$X$-algebra over ${\cal O}$. For any subgroup $Z$ of
$X$, we denote by ${{{\cal B}}}^Z$ the ${\cal O}$-subalgebra of all $Z$-fixed
elements in ${{{\cal B}}}$. For any subgroup $U$ of $X$, we set
${{{\cal B}}}(U)=k\otimes _{\cal O} ({{{\cal B}}}^U/\sum_V {{{\cal B}}}^U_V),$ where $V$ runs over the set of proper subgroups of $U$ and ${{{\cal B}}}^U_V$ is the image
of the relative trace map ${\rm Tr}_V^U: {{{\cal B}}}^V\rightarrow {{{\cal B}}}^U$. We denote by ${\rm Br}^{{{\cal B}}}_U$ the canonical surjective homomorphism ${\rm Br}^{{{\cal B}}}_U:
{{{\cal B}}}^U\rightarrow {{{\cal B}}}(U)$. When ${{{\cal B}}}$ is a $Y$-interior $X$-algebra, the $Y$-interior $X$-algebra on ${{{\cal B}}}$ induces $C_Y(U)$-interior $N_X(U)$-algebra structures on ${{{\cal B}}}(U)$ and on ${{{\cal B}}}^U$ and then ${\rm Br}^{{{\cal B}}}_U$ is a $C_Y(U)$-interior $N_X(U)$-algebra homomorphism. When ${{{\cal B}}}$ is equal to the group
algebra ${\cal O} X$, the $k$-algebra homomorphism $kC_X(U)\rightarrow {{{\cal B}}}(U)$
sending $x\in C_X(U)$ onto the image of $x$ in ${{{\cal B}}}(U)$ is a $k$-algebra
isomorphism (see \cite[Proposition 37.5]{T}), through which we identify ${{{\cal B}}}(U)$ with
$kC_X(U)$.}
\end{order}

\begin{order} \label{Notation8}{\rm Recall that a pointed group $V_\beta$ on ${{{\cal B}}}$
consists of a subgroup $V$ of $X$ and a $({{{\cal B}}}^V)^*$-conjugacy
class $\beta$ of primitive idempotents of ${{{\cal B}}}^V$. We also say that
$\beta$ is a {point\/} of $V$ on ${{{\cal B}}}$. Obviously $X$ acts on the
set of all pointed groups on ${{{\cal B}}}$ by the equality $(V_\beta)^x=V^x_{\beta^x}$. We denote by
$N_X(V_\beta)$ the stabilizer of $V_\beta$ in $X$. Another pointed group $Z_\gamma$ is
contained in $V_\beta$ and we write $Z_\gamma\leq V_\beta$ if $Z\leq V$ and there exist
$i\in \beta$ and $j\in \gamma$ such that $ij=ji=j$. A pointed group $U_\gamma$ on ${{{\cal B}}}$ is local if the
image of $\gamma$ in ${{{\cal B}}}(U)$ is not equal to $\{0\}$. A pointed group $U_\gamma$
is a defect pointed group of a pointed group $V_\beta$ on ${{\cal B}}$
if $U_\gamma$ is a maximal local pointed group contained in $V_\beta$. Since an
$X$-interior algebra structure induces an $X$-algebra structure by
the $X$-conjugation, the above terminology on pointed groups applies to $X$-interior algebras. For more on pointed groups, see \cite{P1} and \cite{T}.}
\end{order}

\begin{order} \label{Notation9}{\rm Set $\dot{X}=X/Y$ and denote by $\dot x$ the image
of $x$ in $\dot X$ for any $x\in X$. Let $Z$ be a subgroup of $X$.
Let ${\cal A}$ be an $\dot X$-graded $Z$-interior algebra such that the
image of $z$ in ${\cal A}$ is just inside the $\dot z$-component of ${\cal A}$
for any $z\in Z$. Then the
$\dot X$-graded $Z$-interior algebra structure on ${\cal A}$ induces a $W$-interior $Z$-algebra structure on the $1$-component ${{{\cal B}}}$ of the $\dot X$-graded algebra ${\cal A}$, where we set $W$ to be the intersection $Y\cap Z$. Let $R_\varepsilon$ be a local pointed group on ${{{\cal B}}}$.
Take some $h\in \varepsilon$ and set ${\cal A}_\varepsilon=h{\cal A} h$
and ${{{\cal B}}}_\varepsilon=h{{{\cal B}}} h$. Then it is easy to see that
${\cal A}_\varepsilon$ is an $\dot X$-graded $R$-interior algebra with the
$\dot x$-component $h{\cal A}_{\dot x}h$ for any $\dot x\in \dot X$ and with the group homomorphism
$R\rightarrow {\cal A}_\varepsilon^*,\,u\mapsto uh$ for any $u\in
R$, and that ${{{\cal B}}}_\varepsilon$ is a $(R\cap Y)$-interior $R$-subalgebra of ${\cal A}_\varepsilon$ (see paragraph \ref{Notation3}).}
\end{order}

\begin{order} \label{Notation10}{\rm For any $\dot{x}\in \dot{X}$, we denote by ${\cal N}_{{\cal
A}_\varepsilon^*}^{\dot{x}}(R)$ the set of all invertible elements
in the $\dot{x}$-component of ${\cal A}_\varepsilon$ normalizing
$Rh$. Notice that ${\cal N}_{{\cal A}_\varepsilon^*}^{\dot{x}}(R)$
may be empty. We set
${\cal N}_{{\cal A}_\varepsilon^*}(R)=\cup_{\dot x\in
\dot{X}} {\cal N}_{{\cal A}_\varepsilon^*}^{
\dot{x}}(R)$. Then it is easily checked that ${\cal N}_{{\cal
A}_\varepsilon^*}(R)$ is a group with respect to the multiplication
of $\cal A_\varepsilon$ and that $({{{\cal B}}}_\varepsilon^R)^*$ is a
normal subgroup of ${\cal N}_{{\cal A}_\varepsilon^*}(R)$. We set
${\cal
F}_{\cal A}(R_\varepsilon)={\cal N}_{{\cal A}_\varepsilon^*}(R)/({\cal
B}_\varepsilon^R)^*$ and $\hat {\cal
F}_{\cal A}(R_\varepsilon)={\cal N}_{{\cal A}_\varepsilon^*}(R)/(1+J({{{\cal B}}}_\varepsilon^R))$.
Since ${{{\cal B}}}_\varepsilon^R/J({\cal
B}_\varepsilon^R)\cong k$, the quotient group $\hat {\cal
F}_{\cal A}(R_\varepsilon)$ has a natural $k^*$-group structure with $k^*$-quotient ${\cal
F}_{\cal A}(R_\varepsilon)$. For any $a\in {\cal N}_{{\cal A}_\varepsilon^*}(R)$, we denote by $\bar a$ the image of $a$ in $\hat {\cal
F}_{\cal A}(R_\varepsilon)$. When $\dot X$ is the trivial group, ${\cal N}_{{\cal A}_\varepsilon^*}(R)$ is just the normalizer of $Rh$ in ${\cal A}_\varepsilon^*$; in this case, we write ${\cal N}_{{\cal A}_\varepsilon^*}(R)$ as ${N}_{{\cal A}_\varepsilon^*}(R)$.}
\end{order}

\begin{order}\label{Notation11} {\rm Now we generalize fusions in \cite{P2} to graded algebras. A pair $(\varphi, \,\dot x)$ is called an $({\cal A},\, \dot X)$-fusion of $R_\varepsilon$ if $\varphi$ is a group automorphism on $R$, $\dot x$ is an element of $\dot X$ and there is an element $a$ in ${\cal N}_{{\cal A}_\varepsilon^*}^{\dot{x}}(R)$ such that $aua^{-1}=\varphi(u)h$ for any $u\in R$. We denote by $F_{{\cal A},\, \dot X}(R_\varepsilon)$ the set of all $({\cal A},\, \dot X)$-fusions of $R_\varepsilon$. $F_{{\cal A},\, \dot X}(R_\varepsilon)$ is a group with respect to the composition determined by the composition of maps and by the product in $\dot X$. It is easy to prove that the following holds.}

\medskip\noindent
\hypertarget{prop2111}{\rm 2.9.1.}\quad {\it The map $\theta: F_{{\cal A},\, \dot X}(R_\varepsilon)\rightarrow {\cal F}_{\cal A}(R_\varepsilon)$ mapping $(\varphi, \,\dot x)$ to $\bar a$, where $a$ is an element of ${\cal N}_{{\cal A}_\varepsilon^*}^{\dot{x}}(R)$ such that $aua^{-1}=\varphi(u)h$ for any $u\in R$, is a group homomorphism. Moreover the homomorphism $\theta$ is a group isomorphism if the map $R\rightarrow Rh,\, u\mapsto uh$ is a group isomorphism.}

{\rm \medskip\noindent We set $\hat F_{{\cal A},\, \dot X}(R_\varepsilon)={\rm Res}_\theta(\hat {\cal
F}_{\cal A}(R_\varepsilon))$. That is to say, $\hat F_{{\cal A},\, \dot X}(R_\varepsilon)$ consists of all triples $(\varphi,\, \dot x,\,\bar a)$, where $a$ is an element of ${\cal N}_{{\cal A}_\varepsilon^*}^{\dot{x}}(R)$ such that $aua^{-1}=\varphi(u)h$ for any $u\in R$.
The group $F_{{\cal A},\, \dot X}(R_\varepsilon)$ has a normal group $F_{{{\cal B}}}(R_\varepsilon)$, which consists of all $({\cal A},\, \dot X)$-fusions $(\varphi,\, 1)$ of $R_\varepsilon$. When $\dot X$ is the trivial group, we write $\hat F_{{\cal A},\, \dot X}(R_\varepsilon)$, $F_{{\cal A},\, \dot X}(R_\varepsilon)$, $(\varphi,\,1,\, \bar a)$ and $(\varphi,\,1)$ as $\hat F_{{\cal A}}(R_\varepsilon)$, $F_{{\cal A}}(R_\varepsilon)$, $(\varphi,\, \bar a)$ and $\varphi$, respectively. Then $F_{{\cal A}}(R_\varepsilon)$ is just the usual fusion group in \cite{P2}.}
\end{order}

\section  {Extension category}

\begin{order}\label{Notation12} {\rm First we fix the setting of the remainder of the paper. Let $G$ be a finite group and $H$ a
normal subgroup of $G$. Let $b$ be a $G$-stable block of $H$ over ${\cal O}$. Set ${\cal A} = {\cal O} Gb$, ${{{\cal B}}} = {\cal O} Hb$ and $\dot G=G/H$. For any $x\in G$ and any subset $I$ of $G$, we denote by $\dot{x}$ and $\dot I$ the images of $x$ and $I$ in $\dot{G}$, respectively. Clearly $\cal A$ is a $\dot{G}$-graded algebra
with the $\dot{x}$-component ${\cal O} H x b$.
Obviously,
$\alpha=\{b\}$ is a point of both $H$ and $G$ on ${\cal O} H$.
Let $Q_\delta$ and $P_\gamma$ be defect pointed groups of~$H_\alpha$ and $G_\alpha$, respectively, such that $Q_\delta\leq P_\gamma\,.$ Choosing  $j\in \delta$
and $i\in \gamma$ such that $ij=ji=j\,,$ we set
${\cal A}_\gamma=i{\cal A} i$, ${{{\cal B}}}_\gamma=i{{{\cal B}}}i$ and
${{{\cal B}}}_\delta=j{{{\cal B}}}j$. Then ${\cal A}_\gamma$ is
a $\dot {G}$-graded $P$-interior algebra, ${\cal
B}_\gamma$ is a $Q$-interior $P$-subalgebra of ${\cal A}_{\gamma}$ and the $Q$-interior algebra
${{{\cal B}}}_\delta$ is a source algebra of the block algebra ${{{\cal B}}}$.
Set ${\mathbbm H}=N_H(Q)$ and ${\mathbbm G}=N_G(Q)$.
Let $\mathbbm b$ be the Brauer correspondent of $b$ in ${\mathbbm H}$ and set $\bbalpha=\{\mathbbm b\}$. Since $b$ is $G$-stable, $\mathbbm b$ is ${\mathbbm G}$-stable. Thus $\bbalpha$ is a point of both ${\mathbbm G}$ and ${\mathbbm H}$ on ${\cal O} {\mathbbm H}$.
We take the local points $\bbgamma$ and $\bbdelta$ of $P$ and $Q$ on ${\cal O} {\mathbbm H}$, respectively, such that ${\rm Br}_P^{{\cal O} H}(\gamma)={\rm Br}_P^{{\cal O} {\mathbbm H}}(\bbgamma)$ and ${\rm Br}_Q^{{\cal O} H}(\delta)={\rm Br}_Q^{{\cal O} {\mathbbm H}}(\bbdelta)$. Then it is easy to verify that $P_{\bbgamma}$ is a defect pointed group of ${\mathbbm G}_{\bbalpha}$, that $Q_{\bbdelta}$ is a defect pointed group of ${\mathbbm H}_{\bbalpha}$ and that $Q_{\bbdelta}$ is contained in $P_{\bbgamma}$.
Take $\mathbbm{i}\in \bbgamma$ and $\mathbbm{j}\in \bbdelta$. Set ${\mathbbm A}={\cal O} {\mathbbm G} \mathbbm b$, ${\mathbbm B}={\cal O} {\mathbbm H} \mathbbm b$, ${\mathbbm A}_\bbgamma=\mathbbm{i}{\mathbbm A}\mathbbm{i}$, ${\mathbbm B}_\bbgamma=\mathbbm{i}{\mathbbm B}\mathbbm{i}$ and ${\mathbbm B}_\bbdelta=\mathbbm{j}{\mathbbm B}\mathbbm{j}$.
For any local pointed group $R_\varepsilon$ such that $Q_\delta\leq R_\varepsilon\leq P_\gamma$, we set $C_G(R_\varepsilon)=N_G(R_\varepsilon)\cap C_G(R)$ and denote by $ b_\varepsilon$ the block of $C_H(R)$ over ${\cal O}$ such that ${\rm Br}_R^{{\cal O} H}(\varepsilon){\rm Br}_R^{{\cal O} H}(b_\varepsilon)={\rm Br}_R^{{\cal O} H}(\varepsilon)$.
Throughout this section, we assume
$P$ is abelian.
}
\end{order}

\begin{order} {\rm
 Let $R_\varepsilon$ and $T_\zeta$ be local pointed groups on ${\cal O} H$.
For any $x\in G$ such that ${}^x(R_\varepsilon)\leq T_\zeta$, we denote by $\varphi_{R,\, x}^T$ the group homomorphism $\varphi_{R,\, x}^T: R\rightarrow T, \,u\mapsto xux^{-1}$. We call the pair $(\varphi_{R,\, x}^T, \,\dot{x})$ a $(G,\,\dot G)$-homomorphism from $R_\varepsilon$ to $T_\zeta$, where we set $\dot G=G/H$ and $\dot x$ is the image of $x$ in $\dot G$. By \cite[Theorem 1.8]{KP} or \cite[Theorem 3.5]{PZ2}, we can see that local pointed groups $R_\varepsilon$ on ${\cal O} H$
such that $Q_\delta\leq R_\varepsilon\leq P_\gamma$ play an important role in determining the algebra structure of block extensions. So we define a category ${\cal E}_{(P_\gamma,\,H,\,G)}$, where the objects are local pointed groups $R_\varepsilon$ such that $Q_\delta\leq R_\varepsilon\leq P_\gamma$ and, for any pair of objects $R_\varepsilon$ and $T_\zeta$, the morphisms from $R_\varepsilon$ to $T_\zeta$ are $(G,\,\dot G)$-homomorphisms from $R_\varepsilon$ to $T_\zeta$; the composition in ${\cal E}_{(P_\gamma,\,H,\,G)}$ is determined by the composition of group homomorphisms and the product in $\dot G$. The category ${\cal E}_{(P_\gamma,\,H,\,G)}$ is the local structure of the block extension ${\cal O} Gb$.
Note that $\{1\}$ is the unique local point of $R$ on ${\cal O} Q$ for any subgroup $R$ of $P$.}
\end{order}

For any local pointed group  $R_\varepsilon$ on ${\cal O} H$,
we denote by $N_G(R_\varepsilon)$ the stabilizer of $R_\varepsilon$ in $G$ under the $G$-conjugation action.
Then we have the following lemma.

\begin{lem}\label{N_G(R)}  Let $R_\varepsilon$ be a local pointed group such that $R_\varepsilon\leq P_\gamma$ and $Q\subset R$. Then we have $N_G(R_\varepsilon)=N_{N_G(R_\varepsilon)}(P_\gamma) C_G(R_\varepsilon)$.
\end{lem}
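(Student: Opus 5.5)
This is a Frattini-type argument: every $x\in N_G(R_\varepsilon)$ will be corrected by an element of $C_G(R_\varepsilon)$ into $N_{N_G(R_\varepsilon)}(P_\gamma)$. The strategy is to show that $N_G(R_\varepsilon)$ permutes transitively, via $C_G(R_\varepsilon)$, a suitable set of defect pointed groups all $G$-conjugate to $P_\gamma$. The inclusion $\supseteq$ is trivial, since $C_G(R_\varepsilon)$ is by definition a subgroup of $N_G(R_\varepsilon)$.

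For $\subseteq$, first observe that $P$ is abelian and $R\le P$, so $P\le C_G(R)$. Also $R_\varepsilon\le P_\gamma$ and $P_\gamma$ is a defect pointed group of $G_\alpha$.

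I will then work with the Brauer quotient. Set ${\cal C}={\cal O}C_H(R)b_\varepsilon$, regarded as a $C_G(R_\varepsilon)$-algebra; note that $C_G(R_\varepsilon)$ stabilizes $b_\varepsilon$, because it fixes $\varepsilon$ and $b_\varepsilon$ is determined by $\varepsilon$. The point $\gamma$ of $P$ on ${\cal O}H$ corresponds, through ${\rm Br}_R$ and the standard correspondence between local pointed groups containing $R_\varepsilon$ and local pointed groups on ${\rm Br}_R({\cal O}H){\rm Br}_R(b_\varepsilon)\cong k C_H(R)\bar b_\varepsilon$ (Puig's theory, \cite{P1} or \cite{T}), to a local point $\bar\gamma$ of $P$ on $kC_H(R)\bar b_\varepsilon$. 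The key claim is that $P_{\bar\gamma}$ is a defect pointed group of $C_G(R_\varepsilon)_{\{\bar b_\varepsilon\}}$.

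To prove the claim, take any local pointed group $T_\tau$ on $kC_H(R)\bar b_\varepsilon$ containing $P_{\bar\gamma}$ with $T\le C_G(R_\varepsilon)$. Since $T\le C_G(R)$ and $R\le P\le T$, the group $T$ contains $R$ in its centre. Lifting $T_\tau$ back through ${\rm Br}_R$ gives a local pointed group $T_{\tau'}$ on ${\cal O}H$ with $P_\gamma\le T_{\tau'}$. Maximality of $P_\gamma$ among local pointed groups of $G_\alpha$ then forces $T=P$. This lifting step, in the $G$-algebra (rather than $H$-interior) setting, is the main obstacle: I would invoke the standard fact that for $R\trianglelefteq T$ the local points of $T$ on ${\cal O}H$ containing $R_\varepsilon$ correspond bijectively to local points of $T$ on ${\rm Br}_R$ of the image of $\varepsilon$ (e.g.\ \cite[\S 37]{T}).

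Now let $x\in N_G(R_\varepsilon)$. Then $x$ normalizes $C_G(R)$, $C_H(R)$ and $b_\varepsilon$, hence $C_G(R_\varepsilon)$. Moreover ${}^xP_{\gamma}$ contains ${}^x R_\varepsilon=R_\varepsilon$, and ${}^xP\le C_G(R)\cap N_G(R_\varepsilon)=C_G(R_\varepsilon)$. So ${}^xP_{\bar\gamma}$ is again a defect pointed group of $C_G(R_\varepsilon)_{\{\bar b_\varepsilon\}}$. By conjugacy of defect pointed groups (\cite{P1}, \cite[Theorem 18.3]{T}) there is $c\in C_G(R_\varepsilon)$ with ${}^{cx}P_{\bar\gamma}=P_{\bar\gamma}$, and by the bijection above ${}^{cx}P_\gamma=P_\gamma$. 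Hence $cx\in N_{N_G(R_\varepsilon)}(P_\gamma)$, and therefore $x\in C_G(R_\varepsilon)N_{N_G(R_\varepsilon)}(P_\gamma)=N_{N_G(R_\varepsilon)}(P_\gamma)C_G(R_\varepsilon)$, where the last equality holds because $C_G(R_\varepsilon)$ is normal in $N_G(R_\varepsilon)$. The hypothesis $Q\subset R$ is only used to ensure that $R_\varepsilon$ is an object of ${\cal E}_{(P_\gamma,H,G)}$; I do not expect to need it further.
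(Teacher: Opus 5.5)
There is a genuine gap. You only prove $P\le C_G(R)$, but your argument needs $P\le N_G(R_\varepsilon)$, i.e.\ $P\le C_G(R_\varepsilon)$, and you use it without proof in two places.

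First, your key claim only makes sense if $P_{\bar\gamma}$ is a pointed group on the $C_G(R_\varepsilon)$-algebra $kC_H(R)\bar b_\varepsilon$, which requires $P\le C_G(R_\varepsilon)$. Even defining $\bar\gamma$ needs $\bar b_\varepsilon$ to be $P$-stable: ${\rm Br}_R(i)$ is primitive in $(kC_H(R))^P$, so it lies in the sum of a $P$-orbit of blocks of $C_H(R)$, not necessarily in $\bar b_\varepsilon$.

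Second, the assertion ${}^xP\le C_G(R)\cap N_G(R_\varepsilon)$ for $x\in N_G(R_\varepsilon)$ is just $P\le N_G(R_\varepsilon)$ again.

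This is not automatic, because ${\cal O}H$ is only an $H$-interior $G$-algebra. For $u\in Q$, conjugation by $u$ on $({\cal O}H)^R$ is inner, since $Q\le C_H(R)$. For $u\in P\setminus H$ it is merely an automorphism. All you can say is that $u$ normalizes $R$ (as $P$ is abelian) and fixes $\gamma$. Hence ${}^u(R_\varepsilon)=R_{{}^u\varepsilon}$ is again a local pointed group contained in $P_\gamma$. Nothing in your argument rules out ${}^u\varepsilon\neq\varepsilon$: local points of $R$ on ${\cal O}H$ correspond to points of $kC_H(R)$, and a priori several of them may lie under $P_\gamma$ and be permuted by $P$.

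This is exactly where the hypothesis $Q\subset R$ is used, contrary to your closing remark. The paper's proof begins by deducing $P\subset C_G(R_\varepsilon)$ from \cite[Proposition 5.5]{KP} together with the commutativity of $P$. By the observation above, it would suffice, for instance, to know that once $Q_\delta\le R_\varepsilon$, the point $\varepsilon$ is the only local point $\varepsilon'$ of $R$ on ${\cal O}H$ with $R_{\varepsilon'}\le P_\gamma$.

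Once $P\subset C_G(R_\varepsilon)$ is established, the rest of your proposal is correct and is the same Frattini argument as the paper's. The only difference is where it is run. You work on the Brauer quotient with the evidently $N_G(R_\varepsilon)$-stable point $\{\bar b_\varepsilon\}$ of $C_G(R_\varepsilon)$ and then lift back through ${\rm Br}_R$. The paper works directly on ${\cal O}H$ with the pointed group $C_G(R_\varepsilon)_\eta$ having $P_\gamma$ as a defect pointed group; its uniqueness, and hence its $N_G(R_\varepsilon)$-stability, comes from \cite[2.16.2]{KP}.
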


\begin{proof} Since $P$ is abelian, by \cite[Proposition 5.5]{KP}, we have $P\subset C_G(R_\varepsilon)$. Clearly there is a pointed group $C_G(R_\varepsilon)_\eta$ on ${\cal O} H$ such that $P_\gamma$ is a defect pointed group of $C_G(R_\varepsilon)_\eta$. Since $R_\varepsilon\leq P_\gamma$, by \cite[2.16.2]{KP} the pointed group $C_G(R_\varepsilon)_\eta$ is unique. Clearly $N_G(R_\varepsilon)$ stabilizes $C_G(R_\varepsilon)_\eta$ and thus the set of all defect pointed groups of $C_G(R_\varepsilon)_\eta$. Since $C_G(R_\varepsilon)$ acts transitively on this set, by Frattini argument we have $N_G(R_\varepsilon)=N_{N_G(R_\varepsilon)}(P_\gamma) C_G(R_\varepsilon)$.
\end{proof}

\begin{order}\label{NotationF1}\label{L} {\rm Set ${\cal N}=N_G(Q_\delta)$, ${\cal H}=C_H(Q)$,  $\beta=\{b_\delta\}$ and set ${\cal E}={\cal N}/{\cal H}$. Clearly ${\cal N}_\beta$ and ${\cal H}_\beta$ are pointed groups on ${\cal O} {\cal H}$. Let $R_\varepsilon$ be a local pointed group such that $Q_\delta\leq R_\varepsilon\leq P_\gamma$ and let $R_\epsilon$ be a local pointed group on ${\cal O} {\cal H}$ such that ${\rm Br}_R^{{\cal O} H}(\varepsilon)={\rm Br}_R^{{\cal O} {\cal H}}(\epsilon)$. Obviously ${\rm Br}_Q^{{\cal O} {\mathbbm H}}(\epsilon)$ is a local point of $R$ on $k{\cal H}$. Since ${\rm Br}_Q^{{\cal O} {\mathbbm H}}$ maps $({\cal O} {\mathbbm H})^R$ onto $(k {\cal H})^R$ and ${\rm Ker}({\rm Br}_Q^{{\cal O} {\mathbbm H}})$ is contained in $J({\cal O} {\mathbbm H})$, $R_\epsilon$ has to be a local pointed group on ${\cal O} {\mathbbm H}$. In particular, $P_{\bbgamma}$ and $Q_\bbdelta$ are local pointed groups on ${\cal O} {\cal H}$. Moreover, $P_{\bbgamma}$ is a defect pointed group of ${\cal N}_\beta$ and $Q_\bbdelta$ is a defect pointed group of ${\cal H}_\beta$. Since the block $b_\delta$ of ${\cal H}$ is nilpotent (see \cite{BP}), by \cite[Theorem 3.5]{PZ2} there are a finite group
$L$ containing $P$ and a surjective group homomorphisms $\bbpi: L\rightarrow {\cal
E}$ with kernel $Q$ such that $\bbpi(u)$ is the image of $u$ in ${\cal E}$ for any $u\in P$ and such that with the identification of $L/Q$ and ${\cal E}$ through the isomorphism $L/Q\cong {\cal E}$ induced by $\bbpi$, the functor
$\bbtau:{\cal E}_{(P_{\bbgamma},\, {\cal H},\,{\cal N})}\rightarrow {\cal E}_{(P_{\{1\}},\, Q,\, L)}$ mapping an object $R_\bbespilon$ onto $R_{\{1\}}$ and a morphism $(\varphi_{R,\, x}^T, \,\tilde{x})$ onto $(\varphi_{R,\, x}^T, \,\pi(x))$ is an isomorphism of categories, where $\tilde{x}$ is the image of $x$ in ${\cal E}$.}
\end{order}

\begin{prop}\label{Cal-L} Keep the notation as above. There are a finite group
${\cal L}$ containing $P$ and a surjective group homomorphism $\pi: {\cal L}\rightarrow {\cal
E}$ with the kernel $Q$ such that $\pi(u)$ is the image of $u$ in ${\cal E}$ for any $u\in P$ and such that with the identification of ${\cal L}/Q$ and ${\cal E}$ through the isomorphism ${\cal L}/Q\cong{\cal E}$ induced by $\pi$, the functor $\tau: {\cal E}_{(P_\gamma,\,H,\,G)}\rightarrow {\cal E}_{(P_{\{1\}},\,Q,\,{\cal L})}$ mapping an object $R_\varepsilon$ onto $R_{\{1\}}$ and a morphism $(\varphi_{R,\, x}^T, \,\dot{x})$ onto $(\varphi_{R,\, x}^T, \,\pi(x))$ is an isomorphism of categories. Moreover for another such a pair of ${\cal L}'$ and $\pi'$, there is a group isomorphism $\varphi: {\cal L}\rightarrow {\cal L'}$, unique up to conjugation, such that $\pi'\circ\varphi=\pi$ and such that $\varphi(u)=u$ for any $u\in P$.
\end{prop}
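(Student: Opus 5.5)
The plan is to take ${\cal L}=L$ and $\pi=\bbpi$ from paragraph \ref{L}, and to show that ${\cal E}_{(P_\gamma,\,H,\,G)}$ is isomorphic to ${\cal E}_{(P_{\bbgamma},\,{\cal H},\,{\cal N})}$ by a functor which is the identity on the underlying group homomorphisms. The isomorphism in \cite[Theorem 3.5]{PZ2} (encoded in $\bbtau$) then does the rest.

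\textbf{Step 1: objects.} The objects correspond bijectively. Given $R_\varepsilon$ with $Q_\delta\le R_\varepsilon\le P_\gamma$, the Brauer map gives a local pointed group $R_\epsilon$ on ${\cal O}{\cal H}$ with ${\rm Br}_R^{{\cal O}H}(\varepsilon)={\rm Br}_R^{{\cal O}{\cal H}}(\epsilon)$. This works because $Q\le R\le P$ and $P$ is abelian, so $C_H(R)\subseteq C_H(Q)={\cal H}$, and by paragraph \ref{L} it satisfies $Q_\bbdelta\le R_\epsilon\le P_{\bbgamma}$. Conversely, every such $R_\epsilon$ arises from a unique $\varepsilon$. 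This is the standard bijection between local points of $R$ on ${\cal O}H$ and on ${\cal O}C_H(R)$, together with the fact that $Q_\delta$-containment corresponds to $Q_\bbdelta$-containment.

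\textbf{Step 2: morphisms.} Let $x\in G$ with ${}^x(R_\varepsilon)\le T_\zeta$. Since both pointed groups contain $Q_\delta$ and $Q\le R$, we need to check $x\in N_G(Q_\delta)={\cal N}$. My first attempt is Lemma \ref{N_G(R)} together with \cite[Proposition 5.5]{KP}. Abelianness of $P$ gives uniqueness of the local point of $Q$ below $P_\gamma$, and ${}^xQ_\delta\le T_\zeta\le P_\gamma$ then forces ${}^xQ_\delta=Q_\delta$.

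Conversely, for $x\in{\cal N}$ we must show ${}^x(R_\varepsilon)\le T_\zeta$ if and only if ${}^x(R_\epsilon)\le T_\theta$ on ${\cal O}{\cal H}$. This follows from compatibility of the Brauer homomorphisms with $x$-conjugation, using the inclusion criterion for local pointed groups via ${\rm Br}$.

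We also need $\dot x$ and $\tilde x$ to determine each other consistently. Composing with the canonical map ${\cal E}={\cal N}/{\cal H}\to {\cal N}H/H\subseteq \dot G$ gives a functor. It is an isomorphism provided two elements $x,x'\in{\cal N}$ inducing the same homomorphism $R\to T$ and the same $\dot x$ also have the same image in ${\cal E}$. I expect this faithfulness point to be the main obstacle. Here $x'x^{-1}\in{\cal N}\cap C_G(R)\cap H$, while ${\cal H}=C_H(Q)$ may be smaller than ${\cal N}\cap H$.

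To handle it, I would argue that morphisms in ${\cal E}_{(P_\gamma,H,G)}$ are pairs $(\varphi,\dot x)$, so different $x$ yielding the same pair define the same morphism. On the ${\cal N}$ side one replaces $\tilde x$ by $\dot x$: the pairs $(\varphi,\tilde x)$ with fixed $\varphi$ and $\dot x$ differ by elements of $N_H(Q_\delta)/C_H(Q)$ centralizing $R$. Since $F_{{\cal O}H}(Q_\delta)$ is a $p'$-group acting faithfully on the abelian $Q$, such an element acts trivially on $Q$ and hence lies in ${\cal H}C_H(R)\cap N_H(Q_\delta)$. A Frattini argument like the one in Lemma \ref{N_G(R)} then shows it is trivial in ${\cal E}$ modulo $C_{\cal N}(R)$. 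Composing with $\bbtau$ yields $\tau$.

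\textbf{Step 3: uniqueness.} Given $({\cal L}',\pi')$, both ${\cal L}$ and ${\cal L}'$ are extensions of ${\cal E}$ by $Q$ containing $P$ compatibly and realizing the same category. The uniqueness part of \cite[Theorem 3.5]{PZ2} applies (via $\bbtau$) and gives $\varphi$, unique up to conjugation, with $\pi'\circ\varphi=\pi$ and $\varphi|_P={\rm id}$.
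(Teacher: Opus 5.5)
Your proposal is correct and is essentially the paper's proof. The paper likewise shows that any $x$ with ${}^x(R_\varepsilon)\le T_\zeta$ lies in $N_G(Q_\delta)$, using ${}^xQ\le P\cap H=Q$ and \cite[Proposition 5.5]{KP}; it defines the isomorphism of ${\cal E}_{(P_\gamma,\,H,\,G)}$ onto the corresponding category for ${\cal H}\trianglelefteq{\cal N}$ by $(\varphi,\dot x)\mapsto(\varphi,\tilde x)$, composes with the isomorphism of paragraph \ref{L}, and gets uniqueness from \cite[Theorem 3.5]{PZ2}. The faithfulness point you flag is not an obstacle: if $x,x'\in{\cal N}$ induce the same pair $(\varphi,\dot x)$, then $x^{-1}x'\in C_G(R)\cap H=C_H(R)\le C_H(Q)={\cal H}$ because $Q\le R$, so $\tilde x=\tilde x'$ at once and the $p'$-group/Frattini detour is unnecessary.
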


 \begin{proof} Take any two local pointed groups $R_\varepsilon$ and $T_\nu$ in ${\cal E}_{(P_\gamma,\,H,\,G)}$ and let $x$ be an element of $G$ such that $(R_\varepsilon)^{x^{-1}}\leq T_\nu$.
By \cite[Proposition 5.3]{KP}, we have $Q^{x^{-1}}=(R^{x^{-1}}\cap H)=T\cap H=P\cap H=Q$. Furthermore, since $(Q_\delta)^{x^{-1}}\leq (R_\varepsilon)^{x^{-1}}\leq P_\gamma$, by \cite[Proposition 5.5]{KP} we have  $\delta^{x^{-1}}=\delta$ and thus $x\in N_G(Q_\delta)$. Let $R_\bbespilon$ and $T_\bbnu$ be local pointed groups on ${\cal O} {\cal H}$ such that ${\rm Br}_R^{{\cal O} H}(\varepsilon)={\rm Br}_R^{{\cal O} {\cal H}}(\bbespilon)$ and ${\rm Br}_T^{{\cal O} H}(\nu)={\rm Br}_R^{{\cal O} {\cal H}}(\bbnu)$, respectively.
Note that $(R_\varepsilon)^{x^{-1}}\leq T_\nu$ is equivalent to $(R_\bbespilon)^{x^{-1}}\leq T_\bbnu$. We define a functor $\psi: {\cal E}_{(P_{\gamma},\, H,\, G)}\rightarrow {\cal E}_{(P_{\bbgamma},\, {\cal H},\, {\cal N})}$ mapping an object $R_\varepsilon$ onto $R_\bbespilon$ and a morphism $(\varphi_{R,\, x^{-1}}^T, \,\dot x)$ from $R_\varepsilon$ to $T_\nu$ onto a morphism $(\varphi_{R,\, x^{-1}}^T, \,\tilde x)$ from $R_\bbespilon$ to $T_\bbnu$. It is trivial to check that this functor is an isomorphism of categories. Set $\tau=\bbtau\circ\psi$. Then $\tau$ is an isomorphism of categories fulfilling Proposition \ref{Cal-L}. Therefore $L$ and $\pi$ satisfy Proposition \ref{Cal-L}. Suppose that there are another finite group $L'$ and another group homomorphism $\pi':L'\rightarrow {\cal E}$ satisfying Proposition \ref{Cal-L}. We denote by $\tau'$ the corresponding isomorphism of categories $ {\cal E}_{(P_\gamma,\,H,\,G)}\cong {\cal E}_{(P_{\{1\}},\,Q,\,L')}$. Then $\tau'\circ \psi^{-1}$ is the isomorphism of categories ${\cal E}_{(P_{\bbgamma},\, {\cal H},\,{\cal N})}\cong{\cal E}_{(P_{\{1\}},\, Q,\, L')}$ induced by the inclusion $P\subset L'$ and the homomorphism $\bbpi$ in the sense of \cite[Theorem 3.5]{PZ2}. Then the last statement of Proposition \ref{Cal-L} follows from the uniqueness part of \cite[Theorem 3.5]{PZ2}.
\end{proof}

\begin{rmk}
{The finite group $L$  and the homomorphism $\bbpi$ in paragraph \ref{NotationF1}  can be chosen to be ${\cal L}$ and $\pi$, respectively.
For consistency, we shall use the symbols $\cal{L}$ and $\pi$ throughout.}
\end{rmk}

\begin{order}\label{Def-hatNGQ}{\rm
We denote by $U$ the simple
factor of $({\cal O} H)^Q$ determined by the point $\delta$ and by
$s_\delta$ the canonical surjective homomorphism $({\cal O} H)^Q\rightarrow U$. Explicitly $U$ is the simple factor of $({\cal O} H)^Q$ such that $s_\delta(\delta)\neq \{0\}$.
Clearly the $N_G(Q_\delta)$-conjugation stabilizes $\delta$ and thus $U$.  We construct the subgroup $\hat N_G(Q_\delta)$ of the direct product $N_G(Q_\delta)\times U^*$ consisting of all elements $(x, s_\delta(a))$  such that the $x$- and $s_\delta(a)$-conjugations have the same action on $U$.
Clearly there are injective group homomorphisms $k^*\rightarrow \hat N_G(Q_\delta),\, \lambda\mapsto (1,\,\lambda)$ and $C_H(Q)\rightarrow \hat N_G(Q_\delta),\,
x\mapsto (x, s_\delta(x))$. We identify $k^*$ and $C_H(Q)$ as subgroups of $\hat N_G(Q_\delta)$.
Then it is easily checked that $k^*$ is central in $\hat N_G(Q_\delta)$, that $C_H(Q)$ is normal in $\hat N_G(Q_\delta)$ and that the intersection of $k^*$ and $C_H(Q)$ is trivial.
Since any
$k$-algebra automorphism on $U$ is inner, the quotient of $\hat N_G(Q_\delta)$ by $k^*$ is isomorphic to $N_G(Q_\delta)$. That is to say, $\hat N_G(Q_\delta)$ is a $k^*$-group with $k^*$-quotient $N_G(Q_\delta)$ (see paragraph \ref{Notation6}).
}
\end{order}

\begin{order}\label{Def-hat-L}{\rm
For an object $R_\varepsilon$ of ${\cal E}_{(P_\gamma,\,H,\,G)}$, we denote by $E_{G,\,\dot G}(R_\varepsilon)$ its automorphism group in ${\cal E}_{(P_\gamma,\,H,\,G)}$. The map $N_G(Q_\delta)\rightarrow E_{G,\,\dot G}(Q_\delta),\, x\mapsto (\varphi_{Q,\, x}^Q, \,\dot{x})$ induces a group isomorphism ${\cal E}\rightarrow E_{G,\,\dot G}(Q_\delta)$, through which we identify the two groups. We set $\hat E_{G,\,\dot G}(Q_\delta)=\hat N_G(Q_\delta)/C_H(Q)$. Then the inclusion $k^*\subset \hat N_G(Q_\delta)$ induces an injective group homomorphism $k^*\rightarrow \hat E_{G,\,\dot G}(Q_\delta)$ so that we can identify $k^*$ as a subgroup of $\hat E_{G,\,\dot G}(Q_\delta)$. Moreover $k^*$ is central in $\hat E_{G,\,\dot G}(Q_\delta)$ and the quotient of $\hat E_{G,\,\dot G}(Q_\delta)$ by $k^*$ is isomorphic to $E_{G,\,\dot G}(Q_\delta)$. That is to say, $\hat E_{G,\,\dot G}(Q_\delta)$ is a $k^*$-group with $k^*$-quotient $E_{G,\,\dot G}(Q_\delta)$. We denote by $\hat {\cal L}$ the pull-back through the homomorphism $\pi$ and the canonical homomorphism $\hat E_{G,\,\dot G}(Q_\delta)\rightarrow E_{G,\,\dot G}(Q_\delta)$. Then $\hat{\cal L}$ is a $k^*$-group with $k^*$-quotient ${\cal L}$.}
\end{order}

\begin{order}\label{Local} {\rm We define a $k^*$-group $\hat E_{{\cal N},\,{\cal E}}(Q_{\bbdelta})$ with $k^*$-quotient $E_{{\cal N},\,{\cal E}}(Q_{\bbdelta})$ as we define $\hat E_{G,\, \dot G}(Q_\delta)$ above. We set $\hat L={\rm Res}_\bbpi(\hat E_{{\cal N},\,{\cal E}}(Q_{\bbdelta}))$. Since the simple factors of $({\cal O} H)^Q$ and $({\cal O} {\cal H})^Q$ determined by the points $\delta$ and $\bbdelta$ respectively are isomorphic as $N_G(Q_\delta)$-algebras, it is easily checked that there is a $k^*$-group isomorphism $\hat E_{G,\, \dot G}(Q_\delta)\cong \hat E_{{\cal N},\,{\cal E}}(Q_{\bbdelta})$ lifting the equality $E_{G,\, \dot G}(Q_\delta)= E_{{\cal N},\,{\cal E}}(Q_{\bbdelta})$ and then that there is a $k^*$-group isomorphism $\hat{\cal L}\cong \hat L$ lifting the equality ${\cal L}=L$. We identify $\hat L$ and $\hat{\cal L}$ through the latter $k^*$-group isomorphism.}
\end{order}

\begin{order} {\rm Since $Q$ is normal in ${\cal L}$, the twisted group algebra ${\cal O}_*\hat{\cal L}^\circ$ has an obvious ${\cal L}/Q$-graded algebra structure. We identify ${\cal L}/Q$ and ${\cal E}$ through the isomorphism ${\cal L}/Q\cong {\cal E}$ induced by $\pi$. Then ${\cal O}_*\hat{\cal L}^\circ$ becomes an ${\cal E}$-graded algebra. Since ${\mathbbm b}={\rm Tr}^{\mathbbm G}_{{\cal N}}(b_\delta)$ and $P_{\bbgamma}\leq {\cal N}_\beta$, we have ${\mathbbm A}_\bbgamma=\mathbbm{i}{\mathbbm A}\mathbbm{i}=\mathbbm{i}({\cal O} {\cal N})\mathbbm{i}$. Since $P_\bbgamma$ is a pointed group on ${\cal O} {\cal H}$, ${\mathbbm A}_\bbgamma$ has an obvious ${\cal E}$-graded algebra structure.
By \cite[Corollary 3.15]{PZ2}, there is a $P$-interior full matrix algebra ${\mathbbm S}$ over ${\cal O}$ such that we have an ${\cal E}$-graded $P$-interior algebra isomorphism
\begin{equation}
{\mathbbm A}_\bbgamma\cong {\mathbbm S}\otimes_{\cal O} {\cal O}_*\hat{\cal L}^\circ.
\tag*{3.10.1}
\label{eq:3.10.1}
\end{equation}
Since $N_H(Q_\delta)$ is normal in $N_G(Q_\delta)$, ${\mathbbm A}_\bbgamma$ also has an obvious $N_G(Q_\delta)/N_H(Q_\delta)$-graded algebra structure.
By \cite[Theorem 1.2]{P1}, $H$ acts on the set of all defect pointed groups of $H_\alpha$. Clearly the $G$-conjugation action also stabilizes this set and thus by Frattini argument, we have $G=H N_G(Q_\delta)$. Then the inclusion $N_G(Q_\delta)\subset G$ induces a group isomorphism $\dot G\cong N_G(Q_\delta)/N_H(Q_\delta)$.
We  identify  $N_G(Q_\delta)/N_H(Q_\delta)$ with $\dot G$.
Thus ${\mathbbm A}_\bbgamma$ becomes a $\dot G$-graded algebra. Let $x$ be an element of $N_G(Q_\delta)$, $\tilde x$ be the image of $x$ in ${\cal E}$, $y_x$ be an inverse image of $\tilde x$ in ${\cal L}$ and $\hat y_x$ be a lift of $y_x$ in $\hat{\cal L}$.
We denote by ${\cal K}$ the inverse image in ${\cal L}$ of $N_H(Q_\delta)/C_H(Q)$ and let $\hat{\cal K}$ be the inverse image in $\hat{\cal L}$ of ${\cal K}$.
Then the isomorphism \ref{eq:3.10.1} maps $\mathbbm{i}({\cal O} N_H(Q_\delta)x)\mathbbm{i}$ onto ${\mathbbm S}\otimes_{\cal O} ({\cal O}_*\hat{\cal K}^\circ)\hat y_x$ and thus it is also a $\dot G$-graded $P$-interior algebra isomorphism.}
\end{order}

\section  { $p$-extensions of inertial blocks}

Throughout this section, we keep the notation in paragraph \ref{Notation12}. We assume that $P$ is abelian, that the index of $H$ in $G$ is a power of $p$, and that $b$ as a block of $G$ is inertial.
In this case, it is known that $P$ is a defect group of the block $b$ of $G$ and that $G$ is equal to $PH$.

\begin{order}\label{p-Cal-A-ga} {\rm Since the index of $H$ in $G$ is a power of $p$, by \cite[Example 3.9 and Theorem 3.16]{KP}, there exists a local point $\tilde\gamma$ of $P$ on ${\cal O} G$ such that $\gamma\subset \tilde\gamma$.
In particular, $P_{\tilde\gamma}$ is a defect pointed group of $G_{\{b\}}$ and ${\cal A}_\gamma$ is a source algebra of the block algebra ${\cal O} Gb$.
Using the simple factor of $({\cal O} G)^P$ determined by the point $\tilde{\gamma}$, in a way analogous to the definitions in paragraphs \ref{Def-hatNGQ} and \ref{Def-hat-L}, we can define a $k^*$-group $\hat{E}_G(P_{\tilde{\gamma}})$ whose $k^*$-quotient is $E_G(P_{\tilde\gamma})=N_G(P_{\tilde{\gamma}})/C_G(P)$.
Since $b$ as a block of $G$ is inertial,
by \cite{P11}, there is a $P$-interior full matrix algebra $S$ over ${\cal O}$ such that there is a $P$-interior algebra isomorphism
\begin{equation}
\mathcal{A}_\gamma \cong S \otimes_{\mathcal{O}} \mathcal{O}_*(P \rtimes  \hat{E}_G(P_{\tilde{\gamma}})^\circ);
\tag*{4.1.1}
\label{eq:4.1.1}
\end{equation}
moreover $S$ is
a primitive Dade $P$-algebra (see \cite[3.9]{P11}), unique up to isomorphisms (see \cite[Lemma 4.6]{P8}).
 The $P$-interior algebra $S$ is also $E_G(P_{\tilde\gamma})$-stable (see \cite[3.9]{P11}). We adjust the $P$-interior algebra structure on $S$ by an $E_G(P_{\tilde\gamma})$-stable linear character of $P$ so that $S$ is an $E_G(P_{\tilde\gamma})$-stable determinant one $P$-interior full matrix algebra. Then the $P$-interior algebra $S$ is unique up to $P$-interior algebra isomorphisms.}
\end{order}

\begin{order} {\rm Set $\hat{\cal A}=S^\circ\otimes_{\cal O} {\rm Res}^G_P({\cal A})$ and $\hat{{{\cal B}}}=S^\circ\otimes_{\cal O} {\rm Res}^G_P({{{\cal B}}})$. Then $\hat{\cal A}$ is a $\dot G$-graded $P$-interior algebra (see paragraphs \ref{Notation1} and \ref{Notation2}) and $\hat {{{\cal B}}}$ is a $Q$-interior $P$-algebra (see paragraph \ref{Notation9}).
By \cite[Theorem 5.3]{P4}, the local pointed group $P_\gamma$ determines a local pointed group $P_{\hat\gamma}$ on $\hat{{{\cal B}}}$ such that for some $\hat{i}\in \hat\gamma$, we have
\begin{equation}
\hat{i}(1\otimes i)=(1\otimes i)\hat{i}=\hat{i}.
\tag*{4.2.1}
\label{eq:4.2.1}
\end{equation}
Similarly $Q_\delta$ also determines a local pointed group $Q_{\hat\delta}$ on $\hat{{{\cal B}}}$ such that for some $\jmath$ in the unique local point of $Q$ on $S$ and some $\hat j$ in $\hat\delta$, we have
$\hat{j}(\jmath\otimes j)=(\jmath\otimes j)\hat{j}=\hat{j}.$
We set $\hat{\cal A}_{\hat\gamma}=\hat{i}\hat{\cal A}\hat{i}$, $\hat{{{\cal B}}}_{\hat\gamma}=\hat{i}\hat{{{\cal B}}}\hat{i}$, $\hat{\cal A}_{\hat\delta}=\hat{j}\hat{\cal A}\hat{j}$ and $\hat{{{\cal B}}}_{\hat\delta}=\hat{j}\hat{{{\cal B}}}\hat{j}$.
Then $\hat{\cal A}_{\hat\gamma}$ is a $\dot G$-graded $P$-interior algebra and $\hat{\cal A}_{\hat\delta}$ is a $\dot G$-graded $Q$-interior algebra (see paragraph \ref{Notation9}).
Since $\hat{\cal A}=\sum_u \hat{{{\cal B}}} u$ where $u$ run over $P$, by \cite[Proposition 2.8]{P5} $P_{\hat\gamma}$ determines a local pointed group $P_{\tilde{\hat\gamma}}$ on $\hat{\cal A}$
such that $\hat\gamma\subset \tilde{\hat\gamma}$. We note that the point $\tilde{\hat\gamma}$ of $P$ on $\hat{\cal A}$ is determined by the point $\tilde\gamma$ of $P$ on ${\cal A}$ in the sense of \cite[Theorem 5.3]{P4}.}
\end{order}

\begin{lem}\label{Iso-Hat-A-ga} With the notation as above, there is a $P$-interior algebra isomorphism
\begin{equation}
\hat{\cal A}_{\hat\gamma}\cong {\cal O}_*(P\rtimes \hat E_G(P_{\tilde\gamma})^\circ).
\tag*{4.3.1}
\label{eq:4.3.1}
\end{equation}
\end{lem}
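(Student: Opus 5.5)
The plan is to compute $\hat{\cal A}_{\hat\gamma}$ directly from the isomorphism \ref{eq:4.1.1} by tensoring with $S^\circ$ and cutting down by a primitive idempotent. By \ref{eq:4.2.1} we have $\hat i\in (S^\circ\otimes_{\cal O} {\cal A}_\gamma)^P$, where we view $S^\circ\otimes_{\cal O}{\cal A}_\gamma=(1\otimes i)\hat{\cal A}(1\otimes i)$. Hence
\[
\hat{\cal A}_{\hat\gamma}=\hat i\,(S^\circ\otimes_{\cal O}{\cal A}_\gamma)\,\hat i .
\]
Substituting \ref{eq:4.1.1} gives a $P$-interior algebra isomorphism
\[
S^\circ\otimes_{\cal O}{\cal A}_\gamma\cong (S^\circ\otimes_{\cal O} S)\otimes_{\cal O}{\cal O}_*(P\rtimes\hat E_G(P_{\tilde\gamma})^\circ).
\]
Since $S$ is a Dade $P$-algebra (full matrix algebra with $P$-stable basis up to the determinant-one adjustment, cf.\ \cite[3.9]{P11}), $S^\circ\otimes_{\cal O} S\cong {\rm End}_{\cal O}(S)$ as $P$-interior algebras. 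Moreover, there is a primitive idempotent $e\in (S^\circ\otimes_{\cal O}S)^P$ with ${\rm Br}_P(e)\neq 0$ and $e(S^\circ\otimes S)e\cong{\cal O}$ as $P$-interior algebras (the standard fact that $S^\circ\otimes S$ has a unique local point of $P$, of multiplicity one and with trivial embedded algebra). Concretely, $e$ is the projection onto the rank-one direct summand ${\cal O}\cdot 1_S$ of the permutation-like module $S$, which is available because $S$ is a primitive Dade algebra with $p\nmid{\rm rank}$ behaviour given by determinant one.

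The key steps, in order, are as follows.

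\begin{enumerate}
\item Note that $e\otimes 1$ cut down gives
\[
(e\otimes 1)\bigl(S^\circ\otimes S\otimes{\cal O}_*(P\rtimes\hat E^\circ)\bigr)(e\otimes 1)\cong{\cal O}_*(P\rtimes\hat E_G(P_{\tilde\gamma})^\circ).
\]
\item Show that $e\otimes 1_{\cal O}$, where $1_{\cal O}$ denotes the idempotent $1\in{\cal O}_*(P\rtimes \hat E^\circ)$, is primitive in the $P$-fixed points. The algebra ${\cal O}_*(P\rtimes\hat E^\circ)$ has $1$ primitive in its $P$-fixed points: its $P$-fixed points modulo the kernel of ${\rm Br}_P$ map to $k_*(P\times \hat C)$-type local algebra, where $\hat E$ is a $p'$-group acting on abelian $P$ and $C_{\hat E}(P)$ is trivial in $E_G(P_{\tilde\gamma})$. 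Alternatively, this follows from the fact that ${\cal A}_\gamma$ is a source algebra, so $i$ is primitive; then $1\otimes i$ decomposes in $\hat{\cal A}$ exactly as $S^\circ\otimes S$ decomposes.
\item Identify the point: the image of $e\otimes 1$ is local and lies in the point determined by $\gamma$ via \cite[Theorem 5.3]{P4}. This holds because ${\rm Br}_P(e\otimes 1)={\rm Br}_P(e)\otimes{\rm Br}_P(1)\neq 0$ and it lies under $1\otimes i$, and the point $\hat\gamma$ is the unique local point of $P$ on $S^\circ\otimes{\cal B}_\gamma$ with the multiplicity-one correspondence of \cite[Theorem 5.3]{P4}.
\item Conclude that $\hat i$ and $e\otimes 1$ are conjugate in $((1\otimes i)\hat{\cal B}(1\otimes i))^P$, which yields \ref{eq:4.3.1}.
\end{enumerate}

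The main obstacle is the second and third steps together: checking that the idempotent $e\otimes1$, which lies a priori in the graded algebra $\hat{\cal A}$, actually lies in the $1$-component $\hat{{\cal B}}$ and belongs to $\hat\gamma$ rather than just to $\tilde{\hat\gamma}$. For this I would use that \ref{eq:4.1.1} is compatible with $P$-conjugation, so $e\otimes 1$ sits in $S^\circ\otimes S\otimes 1\subset \hat{\cal B}_\gamma$. I would then invoke the uniqueness in \cite[Theorem 5.3]{P4} together with $\hat\gamma\subset\tilde{\hat\gamma}$ from \cite[Proposition 2.8]{P5}, which forces the local point of $P$ on $\hat{\cal B}$ under $1\otimes i$ contained in $\tilde{\hat\gamma}$ to be $\hat\gamma$.
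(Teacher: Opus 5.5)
\textbf{Verdict: the strategy is the paper's, but steps 3--4 rest on an unjustified and circular claim. The conclusion only holds after a small repair.}

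Your plan matches the paper's proof. You view both $\hat{\cal A}_{\hat\gamma}$ and ${\cal O}_*(P\rtimes \hat E_G(P_{\tilde\gamma})^\circ)$ as corners of $(1\otimes i)\hat{\cal A}(1\otimes i)\cong S^\circ\otimes_{\cal O} S\otimes_{\cal O}{\cal O}_*(P\rtimes \hat E_G(P_{\tilde\gamma})^\circ)$, cut by $\hat i$ and by $e\otimes 1$ (the paper's $d(1)\otimes 1$), and show the two idempotents lie in the same local point of $P$.

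The problem is how you close the argument. You want $\hat i$ and $e\otimes 1$ to be conjugate inside $((1\otimes i)\hat{\cal B}(1\otimes i))^P$. To get $e\otimes 1\in S^\circ\otimes_{\cal O}{\cal B}_\gamma$, you argue that \ref{eq:4.1.1} is compatible with $P$-conjugation and hence sends $S\otimes 1$ into ${\cal B}_\gamma$. This does not follow:
\begin{itemize}
\item \ref{eq:4.1.1} comes from Puig's structure theorem for the inertial block $b$ of $G$ and is only a $P$-interior algebra isomorphism. It carries no information about the $\dot G$-grading of ${\cal A}_\gamma$, so nothing says which graded component $S\otimes 1$ lands in.
\item Producing a graded version of \ref{eq:4.1.1} is exactly what Proposition \ref{Graded-Hat-A-ga} and Theorem \ref{Gad-Cal-A-ga} do, and both use the present lemma. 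So this step is circular.
\item Consequently you cannot apply the uniqueness of the local point $\hat\gamma$ of $P$ on $S^\circ\otimes_{\cal O}{\cal B}_\gamma$ to $e\otimes 1$.
\end{itemize}

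The requirement is also unnecessary, because the lemma asserts only an ungraded $P$-interior isomorphism. Work in $(1\otimes i)\hat{\cal A}(1\otimes i)=S^\circ\otimes_{\cal O}{\cal A}_\gamma$ instead:
\begin{itemize}
\item Since $\hat\gamma\subset\tilde{\hat\gamma}$, the idempotent $\hat i$ is primitive and local in $\hat{\cal A}^P$ and lies under $1\otimes i$.
\item $P$ has a unique local point on $S^\circ\otimes_{\cal O} S\otimes_{\cal O}{\cal O}_*(P\rtimes\hat E_G(P_{\tilde\gamma})^\circ)$ by \cite[Theorem 5.3]{P4}. This holds because the Brauer quotient of the twisted group algebra at $P$ is $kP$, which is local.
\item $e\otimes 1$ is primitive and local in this algebra.
\end{itemize}
Hence $\hat i$ and $e\otimes 1$ are conjugate by an invertible $P$-fixed element of $S^\circ\otimes_{\cal O}{\cal A}_\gamma$, not of $\hat{\cal B}$. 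That gives \ref{eq:4.3.1}, and it is the paper's argument.

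On your step 2: the fact that the Brauer quotient is $kP$ only shows $P$ has a unique \emph{local} point on ${\cal O}_*(P\rtimes\hat E_G(P_{\tilde\gamma})^\circ)$. It does not exclude non-local points, so it does not show $1$ is primitive in the $P$-fixed points. Your alternative source-algebra argument is the right one, once phrased correctly:
\begin{itemize}
\item $i\in\tilde\gamma$ is primitive in ${\cal A}_\gamma^P\cong (S\otimes_{\cal O}{\cal O}_*(P\rtimes\hat E_G(P_{\tilde\gamma})^\circ))^P$.
\item Any orthogonal decomposition $1=f+f'$ in $({\cal O}_*(P\rtimes\hat E_G(P_{\tilde\gamma})^\circ))^P$ would give the decomposition $1_S\otimes f+1_S\otimes f'$ there, contradicting primitivity.
\end{itemize}
Your claim that $1\otimes i$ decomposes ``exactly as $S^\circ\otimes S$ does'' is false for the non-local summands of $S^\circ\otimes S$. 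You do not need it, since only the local summand $e$ matters.
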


\begin{proof} By isomorphism \ref{eq:4.1.1} and equation \ref{eq:4.2.1}, we get a $P$-interior algebra embedding
\begin{equation}
\hat{\cal A}_{\hat\gamma}\rightarrow S^\circ\otimes_{\cal O} S\otimes_{\cal O} {\cal O}_*(P\rtimes \hat E_G(P_{\tilde\gamma})^\circ).
\tag*{4.3.2}
\label{eq:4.3.2}
\end{equation}
There is a $P$-interior algebra embedding $d: {\cal O}\rightarrow S^\circ\otimes_{\cal O} S$, which induces another $P$-interior algebra embedding
\begin{equation}
{\cal O}_*(P\rtimes \hat E_G(P_{\tilde\gamma})^\circ)\rightarrow S^\circ\otimes_{\cal O} S\otimes_{\cal O}{\cal O}_*(P\rtimes \hat E_G(P_{\tilde\gamma})^\circ)
\tag*{4.3.3}
\label{eq:4.3.3}
\end{equation}
mapping $a$ onto $d(1)\otimes a$ for any $a\in {\cal O}_*(P\rtimes \hat E_G(P_{\tilde\gamma})^\circ)$.
Since we have $C_{P\rtimes E_G(P_{\tilde\gamma})}(P)=P$,  it is easy to check that $\{1\}$ is the unique local point of $P$ on ${\cal O}_*(P\rtimes \hat E_G(P_{\tilde\gamma})^\circ)$. Thus by \cite[Theorem 5.3]{P4}, $P$ has a unique local point on $S^\circ\otimes_{\cal O} S\otimes_{\cal O} {\cal O}_*(P\rtimes \hat E_G(P_{\tilde\gamma})^\circ)$. Therefore the local points of $P$ on $S^\circ\otimes_{\cal O} S\otimes_{\cal O} {\cal O}_*(P\rtimes \hat E_G(P_{\tilde\gamma})^\circ)$ (see paragraph \ref{Notation6}) determined respectively by the local pointed group $P_{\{1\}}$ on ${\cal O}_*(P\rtimes \hat E_G(P_{\tilde\gamma})^\circ)$ through embedding \ref{eq:4.3.3} and by the obvious local pointed group $P_{\{\hat i\}}$ on $\hat{\cal A}_{\hat\gamma}$ through embedding \ref{eq:4.3.2} have to coincide. Then by \cite[2.13.1]{P6}, the lemma is proved.
\end{proof}

\begin{lem}\label{Hat-i} With the notation as above, $\{\hat i\}$ is a local point of $Q$ on $\hat{{{\cal B}}}_{\hat\gamma}$ and thus $\hat i\in \hat\delta$.
\end{lem}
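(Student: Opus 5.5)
The plan is to transport the question through the isomorphism of Lemma~\ref{Iso-Hat-A-ga} and then compute inside a twisted group algebra. Set $E=E_G(P_{\tilde\gamma})$ and $X=P\rtimes \hat E^\circ$, where $\hat E=\hat E_G(P_{\tilde\gamma})$. Two things must be shown. First, $\hat i$ is primitive in $(\hat{{\cal B}}_{\hat\gamma})^Q$. Second, ${\rm Br}_Q(\hat i)\neq 0$. Granting both, $\{\hat i\}$ is a local point of $Q$ on $\hat{{\cal B}}_{\hat\gamma}$. Moreover, by the relation $\hat j(\jmath\otimes j)=\hat j$, the local point of $Q$ determined by $\delta$ through \cite[Theorem 5.3]{P4} is contained in $P_{\hat\gamma}$. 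So it must coincide with $\{\hat i\}$, which gives $\hat i\in\hat\delta$.

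\emph{Step 1: identify the $1$-component.} Since $G=PH$, we have $\dot G\cong P/Q$. Also $N_G(P_{\tilde\gamma})=P\,N_H(P_{\tilde\gamma})$ and $P\subset C_G(P)$, so every element of $E$ has a representative in $H$. Hence the image of $\hat E$ under \ref{eq:4.3.1} lies in the $1$-component of the $\dot G$-graded algebra $\hat{\cal A}_{\hat\gamma}$, while $u\in P$ lies in the $\dot u$-component. Comparing gradings, \ref{eq:4.3.1} restricts to a $Q$-interior $P$-algebra isomorphism
\[
\hat{{\cal B}}_{\hat\gamma}\cong {\cal O}_*(Q\rtimes \hat E^\circ).
\]

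\emph{Step 2 (the key point): $C_E(Q)=1$.} Let $x\in N_G(P_{\tilde\gamma})$ centralize $Q$. Because $P$ centralizes $Q$ and $G=PH$, we get $C_G(Q)=P\,C_H(Q)$, so $x=uh$ with $u\in P$ and $h\in C_H(Q)$. By Proposition~\ref{Cal-L}, the action of $x$ on $P$ (as a morphism of ${\cal E}_{(P_\gamma,H,G)}$) is then the action of an element of ${\cal L}$ lying over the image of $P$ in ${\cal E}$, i.e. of an element of $P$. Since $P$ is abelian, this action is trivial, so $x\in C_G(P)$. The delicate part is making this use of the isomorphism $\tau$ precise, namely checking that elements with the same image in ${\cal E}$ induce the same fusion on $P$. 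I expect this to be the main obstacle.

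\emph{Step 3: locality and primitivity in ${\cal O}_*(Q\rtimes\hat E^\circ)$.} By Step 2, $C_{Q\rtimes E}(Q)=Q$. Hence ${\rm Br}_Q$ maps $({\cal O}_*(Q\rtimes\hat E^\circ))^Q$ onto $kQ$, which is a local algebra. In particular ${\rm Br}_Q(1)\neq0$, and there is a unique local point.

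For primitivity of the unit $\hat i$, I would show that $({\cal O}_*(Q\rtimes\hat E^\circ))^Q/J$ is isomorphic to $k$. The algebra $({\cal O}_*(Q\rtimes\hat E^\circ))^Q$ is spanned by $Q$-class sums. The class of $ue$ with $e\neq1$ has stabilizer $C_Q(e)$, which is proper in $Q$ by Step 2 (as $e$ has $p'$-order and acts faithfully), so these sums lie in $\sum_{R<Q}{\rm Tr}_R^Q$. The remaining sums lie in ${\cal O}Q$, which is local. Thus the algebra equals ${\cal O}Q+{\rm Ker}({\rm Br}_Q)$.

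Suppose $\hat i$ decomposed into orthogonal idempotents. By Rosenberg's lemma, each summand either lies in the Brauer kernel, or maps to a nonzero idempotent of the local algebra $kQ$. At most one summand can be of the second kind. For the others, I would show that a nonzero idempotent in ${\rm Ker}({\rm Br}_Q)$ is impossible. The argument is that such an idempotent $f$ gives $f\hat{{\cal B}}_{\hat\gamma}f$ a projective relative to a proper subgroup, contradicting the fact that $\hat{\cal A}_{\hat\gamma}$ is a primitive source-type algebra with $P$ acting through $X$ with $C_X(Q)=Q$. If this fails, the fallback is to check directly that ${\rm Br}_R$-images rule out such idempotents via the $\dot G$-grading and the primitivity of $\hat i$ in $(\hat{\cal A}_{\hat\gamma})^P$.
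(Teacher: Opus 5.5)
Your overall strategy is the paper's: transport to a twisted group algebra and compute the Brauer quotient at $Q$ using $C(Q)$. However, two steps do not hold up as written.

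\textbf{Step 1 is unjustified.} Isomorphism \ref{eq:4.3.1} is only a $P$-interior algebra isomorphism, obtained abstractly via \cite[2.13.1]{P6}. The copy of $\hat E_G(P_{\tilde\gamma})^\circ$ it yields inside $\hat{\cal A}_{\hat\gamma}$ consists of units normalizing $P\hat i$, not of images of elements of $N_H(P_\gamma)$. So the fact that every element of $E_G(P_{\tilde\gamma})$ has a representative in $H$ says nothing about whether these units are homogeneous of degree $1$; a priori each is a sum of components in the various $\hat{\cal B}_{\hat\gamma}u$. A graded version needs the argument of Proposition~\ref{Graded-Hat-A-ga}: show $F_{\hat{\cal B}}(P_{\hat\gamma})=F_{\hat{\cal A}}(P_{\tilde{\hat\gamma}})$, then re-choose the complement inside $\hat{\cal B}_{\hat\gamma}$ by the unique splitting. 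The paper does this only afterwards. The detour is also unnecessary. Since $\hat i$ is the unit of both $\hat{\cal B}_{\hat\gamma}$ and $\hat{\cal A}_{\hat\gamma}$, it suffices to show that $\hat i$ is primitive in $(\hat{\cal A}_{\hat\gamma})^Q$, and this can be done in the ungraded algebra ${\cal O}_*(P\rtimes\hat E_G(P_{\tilde\gamma})^\circ)$. Locality on $\hat{\cal B}_{\hat\gamma}$ then follows from ${\rm Br}_P(\hat i)\neq 0$, or, as in the paper, from \cite[Proposition 2.8]{P5}.

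\textbf{Step 3 never proves primitivity, which is the more serious gap.} A local Brauer quotient $A(Q)$ only gives a unique \emph{local} point. You still have to exclude nonzero idempotents of $A^Q$ lying in ${\rm Ker}({\rm Br}_Q)$, and your sketch does not do this. Primitivity of $\hat i$ with respect to $P$ does not pass to $Q$, and $C_X(Q)=Q$ alone is not enough. For example, take $X=S_3$, $p=2$, $Q=\langle(12)\rangle$: then $C_X(Q)=Q$, yet $1-\frac13\sum_{g\in A_3}g$ is a nonzero idempotent of $({\cal O}X)^Q$ killed by ${\rm Br}_Q$.

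The missing idea is that $Q$ is \emph{normal} in $X=P\rtimes E_G(P_{\tilde\gamma})$. Then for $x\in X$ one has ${\rm Tr}^Q_{C_Q(x)}(\hat x)=\bigl(\sum_{v\in Q/C_Q(x)}[v,x]\bigr)\hat x$, and the coefficient lies in $J({\cal O}Q)$. Since $J({\cal O}Q)A$ is a two-sided ideal that is nilpotent modulo $J({\cal O})A$, it lies in $J(A)$. Hence ${\rm Ker}({\rm Br}_Q)\subseteq J(A)\cap A^Q\subseteq J(A^Q)$. It follows that $A^Q/J(A^Q)$ is a quotient of $A(Q)\cong kP$, so $A^Q$ is local and $1$ is primitive. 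This is exactly the fact the paper invokes.

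Your Step 2 is correct, and it justifies $C_{P\rtimes E_G(P_{\tilde\gamma})}(Q)=P$, which the paper merely asserts. It follows more directly by coprime action. Any $h\in N_H(P_\gamma)\cap C_H(Q)$ satisfies $[h,P]\subseteq P\cap H=Q$. So $h$ induces a $p'$-automorphism of $P$ that is trivial on $Q$ and on $P/Q$, hence trivial. Your final identification $\hat i\in\hat\delta$ is fine once primitivity and locality are established.
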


\begin{proof} Since we have $C_{P\rtimes E_G(P_{\tilde\gamma})}(Q)=P$ (see isomorphism \ref{eq:4.1.1}) and  $P\cap H=Q$, it is easy to check that $({\cal O}_*(P\rtimes \hat E_G(P_{\tilde\gamma})^\circ))(Q)$
is isomorphic to $kP$ as $k$-algebras. Since ${\rm Ker}({\rm Br}_Q^{{\cal O}_*(P\rtimes \hat E_G(P_{\tilde\gamma})^\circ)})$ is contained in $J({\cal O}_*(P\rtimes \hat E_G(P_{\tilde\gamma})^\circ))$,  $\{1\}$ is the unique local point of $Q$ on the twisted group algebra ${\cal O}_*(P\rtimes \hat E_G(P_{\tilde\gamma})^\circ)$. Then by Lemma \ref{Iso-Hat-A-ga}, $\{\hat{i}\}$ is a local point of $Q$ on $\hat{\cal A}_{\hat\gamma}$.
Since $\hat{\cal A}_{\hat\gamma}$ is equal to $\sum_{u\in P}\hat{{{\cal B}}}_{\hat\gamma}u$, by \cite[Proposition 2.8]{P5} any local point of $Q$ on $\hat{{{\cal B}}}_{\hat\gamma}$ is contained in a local point of $Q$ on $\hat{\cal A}_{\hat\gamma}$. This forces that $\{\hat i\}$ is a local point of $Q$ on $\hat{{{\cal B}}}_{\hat\gamma}$.
\end{proof}

So there is an invertible element $a$ in $\hat{{{\cal B}}}^Q$ such that $\hat j^a=\hat i$. In particular, the $a$-conjugation induces a $\dot G$-graded $Q$-interior algebra isomorphism $\hat{\cal A}_{\hat\delta}\cong {\rm Res}^P_Q(\hat {\cal A}_{\hat\gamma})$, through which we identify $\hat{\cal A}_{\hat\gamma}$ with $\hat{\cal A}_{\hat\delta}$. In this case, we have $\hat j=\hat i$. We denote by $N_{\hat{{{\cal B}}}_{\hat\gamma}^*}(P)$ the normalizer of $P\hat i$ in $\hat{{{\cal B}}}_{\hat\gamma}^*$.

\begin{order}{\rm
Analogous to the definition of $\hat{E}_G(P_{\tilde{\gamma}})$, we define a $k^*$-group $\hat{E}_H(Q_{\delta})$ whose $k^*$-quotient is given by $E_H(Q_{\delta}) = N_H(H_{\delta}) / C_H(Q)$.
Recall that there exists an isomorphism
\[
N_G(Q_\delta) / C_H(Q) \cong E_{G, \dot G}(Q_\delta),
\]
and that $\hat{E}_{G, \dot G}(Q_\delta)$ is a $k^*$-group with $k^*$-quotient $E_{G, \dot G}(Q_\delta)$ (see paragraph~\ref{Def-hat-L}).
Identifying $N_G(Q_\delta) / C_H(Q)$ with $E_{G, \dot G}(Q_\delta)$ via the isomorphism above, it follows that $\hat{E}_H(Q_{\delta})$ coincides with the inverse image of $N_H(Q_{\delta}) / C_H(Q)$ in $\hat{E}_{G, \dot G}(Q_\delta)$.
By the proof of Lemma \ref{Iso-Hat-A-ga}, $\gamma$ is the unique local point of $P$ on ${\cal O} H$ such that $\gamma\subset \tilde\gamma$, so we have$N_G(P_{\tilde\gamma})=N_G(P_\gamma)$. Since $G=PH$ and $Q=P\cap H$, we have $N_G(P_{\gamma})\subseteq N_G(Q_{\delta})$.
By Lemma \ref{N_G(R)}, we have $N_G(Q_\delta)=N_{N_G(Q_\delta)}(P_\gamma) C_G(Q_\delta)$.
So $N_G(Q_\delta)=N_{G}(P_\gamma) C_G(Q_\delta)$.
Then the inclusion map  $N_{G}(P_\gamma)\hookrightarrow N_G(Q_\delta)$ induced an isomorphism
 \begin{equation}
E_G(P_{\tilde{\gamma}})\cong E_H(Q_{\delta}).
\tag*{4.5.1}
\label{eq:4.5.1}
\end{equation}
}
\end{order}

\begin{lem}\label{k*-Iso} Keep the notation as above and denote by ${\frak e}_{\delta,\, \gamma}$ the inverse of isomorphism \ref{eq:4.5.1}. Then there is a $k^*$-group isomorphism $\hat {\frak e}_{\delta,\, \gamma}: \hat E_H(Q_\delta)\cong \hat E_G(P_{\tilde\gamma})$ lifting the group isomorphism ${\frak e}_{\delta,\, \gamma}$.
\end{lem}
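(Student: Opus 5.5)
The plan is to realize both $k^*$-groups as quotients of normalizer groups inside a common algebra, namely $\hat{\cal A}_{\hat\gamma}=\hat{\cal A}_{\hat\delta}$ (identified above via $\hat j=\hat i$). The comparison map will then come from the identity map on a suitable group of invertible elements.

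First, using \cite[Theorem 5.3]{P4} together with the Dade algebra $S$ (which is $E_G(P_{\tilde\gamma})$-stable and of determinant one), identify $\hat E_G(P_{\tilde\gamma})$ with $N_{\hat{\cal A}_{\hat\gamma}^*}(P)/(1+J(\hat{\cal A}_{\hat\gamma}^P))\cdot P$. Concretely, the simple factor of $({\cal O}G)^P$ at $\tilde\gamma$ and the simple factor of $\hat{\cal A}^P$ at $\tilde{\hat\gamma}$ differ by tensoring with the simple quotient of $(S^\circ)^P$. That quotient carries a canonical $E_G(P_{\tilde\gamma})$-action lifting to $k^*$-trivial data, because $S$ is determinant one and stable; this gives a $k^*$-group isomorphism between the two $\hat E$'s.

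Treat $\hat E_H(Q_\delta)$ in the same way. It is isomorphic to the $k^*$-group $\hat{\cal F}_{\hat{{\cal B}}}(Q_{\hat\delta})$ modulo the image of $Q$, built from the simple factor of $\hat{{\cal B}}^Q$ at $\hat\delta$. Here one uses that $S$ restricted to $Q$ remains a determinant one Dade $Q$-algebra stable under $N_H(Q_\delta)$. By Lemma \ref{Iso-Hat-A-ga}, $\hat{\cal A}_{\hat\gamma}\cong{\cal O}_*(P\rtimes\hat E_G(P_{\tilde\gamma})^\circ)$, so $N_{\hat{\cal A}^*_{\hat\gamma}}(P)$ is generated by $(\hat{\cal A}^P_{\hat\gamma})^*$ and the images of the lifts $\hat e\in\hat E_G(P_{\tilde\gamma})$.

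Next, show that these normalizing elements lie in the trivial-graded component $\hat{{\cal B}}_{\hat\gamma}$ up to multiplication by $P$. Since $G=PH$ and $N_G(P_\gamma)=N_G(P_{\tilde\gamma})\subseteq N_G(Q_\delta)$, each class in $E_G(P_{\tilde\gamma})$ is represented by $x\in N_H(P_\gamma)$. A corresponding normalizer element can therefore be taken in $N_{\hat{{\cal B}}^*_{\hat\gamma}}(P)$, after adjusting by an element of $P$ to land in the $1$-component. Such an element normalizes $Q\hat i$, and since $\hat i\in\hat\delta$ by Lemma \ref{Hat-i}, it defines an element of $\hat E_H(Q_\delta)$. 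This gives a map $N_{\hat{{\cal B}}^*_{\hat\gamma}}(P)\to\hat E_H(Q_\delta)$. Modding out by $(\hat{{\cal B}}_{\hat\gamma}^P)^*$ and $1+J$, this map is a $k^*$-group homomorphism from (a copy of) $\hat E_G(P_{\tilde\gamma})$ to $\hat E_H(Q_\delta)$ lifting \ref{eq:4.5.1}. A $k^*$-homomorphism lifting an isomorphism of $k^*$-quotients is automatically an isomorphism. Its inverse is $\hat{\frak e}_{\delta,\gamma}$.

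The main obstacle is well-definedness and compatibility with the $k^*$-structures. One must show that the kernel data match: $1+J(\hat{{\cal B}}^P_{\hat\gamma})$ should map into $1+J(\hat{{\cal B}}^Q_{\hat\delta})$, and invertible elements of $\hat{{\cal B}}_{\hat\gamma}^P$ should map into $(\hat{{\cal B}}^Q_{\hat\delta})^*$ acting by scalars on the relevant simple factor. For this I would use that $P$ is abelian, so $P\subseteq C_G(Q_\delta)$, and that $\hat{{\cal B}}_{\hat\gamma}^P/J\cong k$ via the local point $\{\hat i\}$. I would also need to check that the Dade-algebra twist by $S$ contributes identical cocycles on both sides. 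This should follow from uniqueness of the determinant one stable structure on $S$ and its restriction to $Q$, via \cite[Lemma 4.6]{P8}.
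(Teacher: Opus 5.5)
Your construction is sound, but it is not how the paper proves this lemma. It is essentially the construction of the second lift $\hat{\frak e}'_{\delta,\gamma}={\frak g}_\gamma^{-1}\circ\hat{\frak f}_{\gamma,\delta}^{-1}\circ{\frak g}_\delta$, which the paper carries out later, inside the proof of Proposition~\ref{Graded-Hat-A-ga}.

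The paper proves the lemma without touching $S$ or normalizers in source algebras:
\begin{itemize}
\item Since $|G:H|$ is a $p$-power, ${\cal O}H\to{\cal O}G$ is a strict semicovering. So \cite[Proposition 3.3]{KP} gives $({\cal O}H)(Q_\delta)\cong({\cal O}G)(Q_{\tilde\delta})$, and hence $\hat E_H(Q_\delta)\cong\hat E_G(Q_{\tilde\delta})$.
\item By \cite[Proposition 6.5]{KP}, $Q_{\tilde\delta}\le P_{\tilde\gamma}$ are both nilcentralized, and Puig's comparison theorem \cite[Theorem 7.16]{P10} yields $\hat E_G(Q_{\tilde\delta})\cong\hat E_G(P_{\tilde\gamma})$.
\end{itemize}

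In your route two of the three pieces are easy:
\begin{itemize}
\item The $P$-side is free. Lemma~\ref{Iso-Hat-A-ga} directly gives $\hat{\cal F}_{\hat{\cal A}}(P_{\tilde{\hat\gamma}})\cong\hat E_G(P_{\tilde\gamma})^\circ$, with no Dade-twist discussion needed.
\item The restriction map $N_{\hat{{\cal B}}^*_{\hat\gamma}}(P)/(1+J(\hat{{\cal B}}^P_{\hat\gamma}))\to\hat{\cal F}_{\hat{{\cal B}}}(Q_{\hat\delta})$ is formal.
\end{itemize}
So all the content of the lemma sits in the $Q$-side identification $\hat{\cal F}_{\hat{{\cal B}}}(Q_{\hat\delta})\cong\hat E_H(Q_\delta)^\circ$, lifting $F_{\hat{{\cal B}}}(Q_{\hat\delta})=E_H(Q_\delta)$. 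This step cannot be done ``in the same way'' as the $P$-side, since nothing analogous to Lemma~\ref{Iso-Hat-A-ga} ties the normalizer $k^*$-group at $Q$ to the simple factor of $({\cal O}H)^Q$ at $\delta$. Nor does \cite[Lemma 4.6]{P8}, which is uniqueness of $S$ up to isomorphism, supply it. What you need are exactly the inputs the paper uses for ${\frak g}_\delta$:
\begin{itemize}
\item Puig's comparison of the multiplicity-algebra $k^*$-group with the source-algebra normalizer $k^*$-group, \cite[Proposition 6.12]{P6};
\item the fact that tensoring with a determinant-one stable Dade algebra does not change the $k^*$-extension, \cite[2.12.4]{KP} and \cite[Proposition 5.11]{P4}.
\end{itemize}

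What each route buys:
\begin{itemize}
\item Yours produces the particular lift that is compatible with normalizers in $\hat{\cal A}_{\hat\gamma}$. The later adjustment of $\hat{\frak e}_{\delta,\gamma}$ by a linear character in Proposition~\ref{Graded-Hat-A-ga} then becomes unnecessary.
\item The paper's is shorter and never involves $S$ or $\hat{\cal A}_{\hat\gamma}$.
\end{itemize}

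Three small slips:
\begin{itemize}
\item Factor out only $1+J(\hat{{\cal B}}^P_{\hat\gamma})$, not $(\hat{{\cal B}}^P_{\hat\gamma})^*$. The latter contains $k^*$, so quotienting by it loses the $k^*$-structure.
\item Quotienting by $P$ is vacuous: for abelian $P$ we have $P\hat i\subseteq 1+J(\hat{\cal A}^P_{\hat\gamma})$.
\item Normalizing elements need not be homogeneous. However, your representatives $x\in N_H(P_\gamma)$ already give $N_{\hat{\cal A}^*_{\hat\gamma}}(P)=N_{\hat{{\cal B}}^*_{\hat\gamma}}(P)\,(\hat{\cal A}^P_{\hat\gamma})^*$, with no adjustment by $P$ needed.
\end{itemize}
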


\smallskip\noindent{\it Proof.}\quad Since the index of $H$ in $G$ is a $p$-power, the inclusion map ${\cal O} H\hookrightarrow {\cal O} G$ is a strict semicovering of $P$-algebra homomorphisms (see \cite[Example 3.9 and Theorem 3.16]{KP}). We denote by $\tilde\delta$ the local point of $Q$ on ${\cal O} G$ such that $\delta\subset \tilde\delta$. By the proof of Lemma \ref{Iso-Hat-A-ga}, $\delta$ is the unique local point of $Q$ on ${\cal O} H$ such that $\delta\subset \tilde\delta$, so we have $N_G(Q_\delta)=N_G(Q_{\tilde\delta})$ and then  $E_H(Q_\delta)=E_G(Q_{\tilde\delta})$. Then by \cite[Proposition 3.3]{KP}, the inclusion map ${\cal O} H\hookrightarrow {\cal O} G$ induces an $N_G(Q_\delta)$-algebra isomorphism $({\cal O} H)(Q_\delta)\cong({\cal O} G)(Q_{\tilde\delta})$ which induces a $k^*$-group isomorphism $\hat E_H(Q_\delta)\cong\hat E_G(Q_{\tilde\delta})$ lifting the equality $E_H(Q_\delta)=E_G(Q_{\tilde\delta})$. So it suffices to show that there is a $k^*$-group isomorphism $\hat {\frak e}_{\delta,\, \gamma}': \hat E_G(Q_{\tilde\delta})\cong \hat E_G(P_{\tilde\gamma})$ lifting the group isomorphism ${\frak e}_{\delta,\, \gamma}:E_G(Q_{\tilde\delta})\cong E_G(P_{\tilde\gamma})$. Since $b_\delta$ and $b_\gamma$ are nilpotent blocks of $C_H(Q)$ and $C_H(P)$ respectively and the index of $H$ in $G$ is a $p$-power, by \cite[Proposition 6.5]{KP} $Q_{\tilde\delta}$ and $P_{\tilde\gamma}$ are nilcentralized in the sense of \cite[3.1]{P5}. Since $Q_{\tilde\delta}$ is contained in $P_{\tilde\gamma}$,
the isomorphism $\hat {\frak e}_{\delta,\, \gamma}'$ follows from \cite[Theorem 7.16]{P10}.

\begin{order} {\rm We consider the twisted group algebra ${\cal O}_*(P\rtimes \hat E_G(P_{\tilde\gamma})^\circ)$ and begin to endow it with a $\dot{G}$-graded $P$-interior algebra structure.
Clearly the $k^*$-group isomorphism $\hat{\frak e}_{\delta,\, \gamma}$ can be extended to an injective group homomorphism
$Q\rtimes \hat E_H(Q_\delta)\rightarrow P\rtimes \hat E_G(P_{\tilde\gamma}),$ which extends the inclusion $Q\subset P$ and is still denoted by $\hat{\frak e}_{\delta,\, \gamma}$ for convenience. We identify $Q\rtimes \hat E_H(Q_\delta)$ with a subgroup of $P\rtimes \hat E_G(P_{\tilde\gamma})$ through $\hat {\frak e}_{\delta,\, \gamma}$, which is normal in $P\rtimes \hat E_G(P_{\tilde\gamma})$.}
\end{order}

\begin{order}{\rm Clearly $P\subset P\rtimes \hat E_G(P_{\tilde\gamma})$ and $P\subset G$ induce the following group isomorphisms $$(P\rtimes \hat E_G(P_{\tilde\gamma}))/(Q\rtimes \hat E_H(Q_\delta))\cong P/Q\cong \dot{G}. $$ We identify $\dot{G}$ with $(P\rtimes \hat E_G(P_{\tilde\gamma}))/(Q\rtimes \hat E_H(Q_\delta))$ through these isomorphisms. As $\cal A$ is endowed with a $\dot{G}$-graded algebra structure (see paragraph \ref{Notation12}), we endow the twisted
group algebra ${\cal O}_*(P\rtimes \hat E_G(P_{\tilde\gamma})^\circ)$ with an obvious $\dot{G}$-graded algebra structure. Notice that the $1$-component of this $\dot{G}$-graded algebra ${\cal O}_*(P\rtimes \hat E_G(P_{\tilde\gamma})^\circ)$ is ${\cal O}_*(Q\rtimes \hat E_H(Q_\delta)^\circ)$. The inclusion $P\subset P\rtimes \hat E_G(P_{\tilde\gamma})$ induces a $P$-interior algebra structure on ${\cal O}_*(P\rtimes \hat E_G(P_{\tilde\gamma})^\circ)$.
}
\end{order}

\begin{prop}\label{Graded-Hat-A-ga} With the notation as above,
there is a $\dot{G}$-graded $P$-interior algebra isomorphism
$\hat{\cal A}_{\hat\gamma}\cong {\cal O}_*(P\rtimes \hat E_G(P_{\tilde\gamma})^\circ).$
\end{prop}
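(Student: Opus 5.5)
We start from the $P$-interior algebra isomorphism $f:\hat{\cal A}_{\hat\gamma}\cong {\cal O}_*(P\rtimes \hat E_G(P_{\tilde\gamma})^\circ)$ of Lemma \ref{Iso-Hat-A-ga}. The plan is to show that $f$, perhaps after composing with a suitable automorphism, respects the $\dot G$-gradings. Both gradings are determined by the $P$-interior structure together with the $1$-components. Since $G=PH$, $Q=P\cap H$ and $\dot G\cong P/Q$, the $\dot u$-component of $\hat{\cal A}_{\hat\gamma}$ is $\hat{{{\cal B}}}_{\hat\gamma}u=u\hat{{{\cal B}}}_{\hat\gamma}$ for $u\in P$. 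Likewise, the $\dot u$-component of the twisted group algebra is ${\cal O}_*(Q\rtimes \hat E_H(Q_\delta)^\circ)u$. So it suffices to prove that $f(\hat{{{\cal B}}}_{\hat\gamma})={\cal O}_*(Q\rtimes \hat E_H(Q_\delta)^\circ)$, possibly after replacing $f$ by ${\rm int}(c)\circ f$ for some $c$ in the $P$-fixed units.

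First I identify the image of the $1$-component intrinsically. By Lemma \ref{Hat-i}, $\hat{{{\cal B}}}_{\hat\gamma}=\hat{{{\cal B}}}_{\hat\delta}$ is a $Q$-interior algebra in which $\{\hat i\}$ is the unique local point of $Q$. Moreover $\hat{{{\cal B}}}_{\hat\delta}$ is obtained from the source algebra ${{{\cal B}}}_\delta$ of the block $b$ of $H$ by tensoring with $S^\circ$ and cutting by a point. The block $b$ of $H$ is inertial, being covered with $p$-power index by the inertial block $b$ of $G=PH$; alternatively one can use that $Q$ is abelian and argue via \cite{P11}. By \cite{P11} and the argument of Lemma \ref{Iso-Hat-A-ga} applied to $Q$, with $\hat E_H(Q_\delta)$ in place of $\hat E_G(P_{\tilde\gamma})$, we get a $Q$-interior isomorphism $\hat{{{\cal B}}}_{\hat\delta}\cong{\cal O}_*(Q\rtimes \hat E_H(Q_\delta)^\circ)$. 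Here one must check that ${\rm Res}^P_Q(S)$ is the Dade algebra attached to $b$ as a block of $H$, which follows from uniqueness of $S$ \cite[Lemma 4.6]{P8}.

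The main step is to compare the normalizer structures. In $\hat{\cal A}_{\hat\gamma}$, I consider the group $N$ of units normalizing $P\hat i$, and the analogous group in the twisted group algebra. The latter is generated by $P\rtimes\hat E_G(P_{\tilde\gamma})$ together with $1+J$ of the $P$-fixed part, and similarly for the $1$-components. The gradation on the target is the one induced via $\hat{\frak e}_{\delta,\gamma}$ of Lemma \ref{k*-Iso}. Thus I must show that $f$ carries graded normalizing units of $\hat{{{\cal B}}}_{\hat\gamma}$, namely ${\cal N}^{1}$, to elements of $\hat E_H(Q_\delta)$ up to $(1+J)$ and $Q$. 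On the quotient level both sides yield $\hat E_H(Q_\delta)\cong\hat E_G(P_{\tilde\gamma})$ as $k^*$-groups, through $\hat{\frak e}_{\delta,\gamma}$ and the identity $N_G(Q_\delta)=N_G(P_\gamma)C_G(Q_\delta)$ from Lemma \ref{N_G(R)}. I expect this compatibility of $k^*$-structures to be the main obstacle: the two $k^*$-groups come from different simple factors ($\delta$ on ${\cal O}H$ versus $\tilde\gamma$ on ${\cal O}G$), and $f$ could a priori twist the lift by a character. To handle it, I would first use \cite[Theorem 7.16]{P10}, whose nilcentralized setting is the one used in Lemma \ref{k*-Iso}, to see that the lift is canonical. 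If necessary I would then adjust $f$ by an automorphism of ${\cal O}_*(P\rtimes\hat E_G(P_{\tilde\gamma})^\circ)$ induced by a character of $\hat E_G(P_{\tilde\gamma})$ trivial on $k^*$, which is $P$-interior.

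Finally, ${\cal O}_*(Q\rtimes \hat E_H(Q_\delta)^\circ)$ is spanned by $Q\rtimes\hat E_H(Q_\delta)$. Since $f$ maps $Q\hat i$ and lifts of $\hat E_H(Q_\delta)$ in $\hat{{{\cal B}}}_{\hat\gamma}$ into it, we get $f(\hat{{{\cal B}}}_{\hat\gamma})\subseteq {\cal O}_*(Q\rtimes \hat E_H(Q_\delta)^\circ)$. Equality follows by comparing ${\cal O}$-ranks, because both sides have rank equal to $1/|P:Q|$ of the total rank by the grading. Hence $f$ is graded.
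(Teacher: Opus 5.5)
\textbf{There is a genuine gap: the step that makes $f$ respect the $1$-components is asserted, not proved.}

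Your reduction is correct. Since $f$ is $P$-interior and the $\dot u$-components are $\hat{\cal B}_{\hat\gamma}u$ and ${\cal O}_*(Q\rtimes\hat E_G(P_{\tilde\gamma})^\circ)u$, it suffices to find a $P$-fixed unit $c$ with $({\rm int}(c)\circ f)(\hat{\cal B}_{\hat\gamma})={\cal O}_*(Q\rtimes\hat E_G(P_{\tilde\gamma})^\circ)$.

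The step that is supposed to achieve this is your assertion in the last paragraph that $f$ maps $Q\hat i$ and the lifts of $\hat E_H(Q_\delta)$ in $\hat{\cal B}_{\hat\gamma}$ into the $1$-component. It does not follow from what you set up. The lifts coming from Step 2 are units of $\hat{\cal B}_{\hat\gamma}$ that normalize $Q\hat i$ only. Normalizing $Q$ gives no control on the grading: every $u\in P\setminus Q$ centralizes $Q$, yet lies in a nontrivial component.

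Step 2 is itself unsupported. Abelian defect does not make a block inertial. You neither prove nor cite that inertiality passes from $b$ as a block of $PH$ to $b$ as a block of $H$. In the paper, the isomorphism $\hat{\cal B}_{\hat\gamma}\cong{\cal O}_*(Q\rtimes\hat E_H(Q_\delta)^\circ)$ is later read off from this proposition, not fed into it.

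Two ideas are missing.
\begin{itemize}
\item[(i)] Every fusion of $P_{\tilde{\hat\gamma}}$ in $\hat{\cal A}$ is realized by a unit of the $1$-component normalizing $P\hat i$, that is, $F_{\hat{\cal B}}(P_{\hat\gamma})=F_{\hat{\cal A}}(P_{\tilde{\hat\gamma}})$. This holds because by \cite[Proposition 2.8]{P5} the former is normal of $p$-power index in the latter, and the latter equals $E_G(P_{\tilde\gamma})$, a $p'$-group.
\item[(ii)] Because this group is $p'$, the extensions of it by $1+J(\hat{\cal B}_{\hat\gamma}^P)$ and by $1+J(\hat{\cal A}_{\hat\gamma}^P)$, given by $N_{\hat{\cal B}_{\hat\gamma}^*}(P)/k^*$ and $N_{\hat{\cal A}_{\hat\gamma}^*}(P)/k^*$, split uniquely up to conjugacy \cite[Lemma 45.6]{T}. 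This yields a subgroup $\hat F\supseteq k^*$ of $N_{\hat{\cal B}_{\hat\gamma}^*}(P)$ lifting $E_G(P_{\tilde\gamma})$. It also shows that $\hat F$ is conjugate under $1+J(\hat{\cal A}_{\hat\gamma}^P)$ to $f^{-1}(\hat E_G(P_{\tilde\gamma})^\circ)$.
\end{itemize}
After composing $f$ with the corresponding inner automorphism, the set $(Q\hat i)\hat F\subset\hat{\cal B}_{\hat\gamma}$ goes onto the image of $Q\rtimes\hat E_G(P_{\tilde\gamma})^\circ$, which spans the $1$-component. Equality then follows from the decomposition of both algebras as direct sums of their components. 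A bare ${\cal O}$-rank comparison is not enough, since over ${\cal O}$ it only gives a sublattice of finite index. This route also makes Step 2 unnecessary.

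\textbf{The $k^*$-compatibility you single out as the main obstacle has no bearing on gradedness.} Through $\hat{\mathfrak e}_{\delta,\gamma}$, the subgroup $Q\rtimes\hat E_H(Q_\delta)$ is just $Q\rtimes\hat E_G(P_{\tilde\gamma})$, whichever lift is used. Twisting $f$ by a linear character of $E_G(P_{\tilde\gamma})$ preserves ${\cal O}_*(Q\rtimes\hat E_G(P_{\tilde\gamma})^\circ)$. So neither \cite[Theorem 7.16]{P10} nor a character twist can repair a failure of $f$ to respect $1$-components. In the paper, the diagram involving $\mathfrak{g}_\gamma$, $\mathfrak{g}_\delta$ and $\hat{\mathfrak f}_{\gamma,\delta}$ only makes the labelling of the $1$-component by $\hat E_H(Q_\delta)$ match the intrinsic one. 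The character adjustment there is applied to $\hat{\mathfrak e}_{\delta,\gamma}$, not to $f$.
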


\begin{proof} Since $\hat{\cal A}$ is a $\dot G$-graded $P$-interior algebra with the 1-component $\hat{{{\cal B}}}$, the group $F_{\hat{{{\cal B}}}}(P_{\hat\gamma})$ (see paragraph \ref{Notation11}) makes sense. In this case, $F_{\hat{{{\cal B}}}}(P_{\hat\gamma})$ actually is equal to the group obtained by applying $F_A(K_\gamma)$ in \cite[2.3]{P5} to $\hat {{{\cal B}}}$ and $P_{\hat\gamma}$. By \cite[Proposition 2.8]{P5} applied to the inclusion map $\hat{{{\cal B}}}\rightarrow \hat{\cal A}$, $F_{\hat{{{\cal B}}}}(P_{\hat\gamma})$ is a normal subgroup of $F_{\hat{\cal A}}(P_{\tilde{\hat\gamma}})$ with a $p$-power index. On the other hand, by \cite[Theorem 3.1]{P2}, we have $E_G(P_{\tilde\gamma})= F_{{\cal A}}(P_{\tilde\gamma})$ and by \cite[Lemma 1.17]{KP}, we have $F_{\cal A}(P_{\tilde\gamma})= F_{\hat{\cal A}}(P_{\tilde{\hat\gamma}})$. Since $E_G(P_{\tilde{\gamma}})$ is a $p'$-group, so is $F_{\hat{\cal A}}(P_{\tilde{\hat\gamma}})$ and thus we have $F_{\hat{{{\cal B}}}}(P_{\hat\gamma})=F_{\hat{\cal A}}(P_{\tilde{\hat\gamma}})$.

For any $\phi\in F_{\hat{{{\cal B}}}}(P_{\hat\gamma})$, there is an invertible element $a$ in $N_{\hat{{{\cal B}}}_{\hat\gamma}^*}(P)$ such that $\phi(u)\hat i=aua^{-1}$ for any $u\in P$. Since the map $P\rightarrow P\hat i,\,u\mapsto u\hat i$ is a group isomorphism, it is easy to check that the map $F_{\hat{{{\cal B}}}}(P_{\hat\gamma})\rightarrow N_{\hat{{{\cal B}}}_{\hat\gamma}^*}(P)/(\hat{{{\cal B}}}_{\hat\gamma}^P)^*$ mapping $\phi$ onto the image of $a$ in $N_{\hat{{{\cal B}}}_{\hat\gamma}^*}(P)/(\hat{{{\cal B}}}_{\hat\gamma}^P)^*$ for any $\phi\in F_{\hat{{{\cal B}}}}(P_{\hat\gamma})$ is a group isomorphism. Then we have a short exact sequence of group homomorphisms $$1\rightarrow (1+J(\hat{{{\cal B}}}_{\hat\gamma}^P))\cong (\hat{{{\cal B}}}_{\hat\gamma}^P)^*/k^*\rightarrow N_{\hat{{{\cal B}}}_{\hat\gamma}^*}(P)/k^*\rightarrow F_{\hat{{{\cal B}}}}(P_{\hat\gamma})\rightarrow 1. $$
Since $F_{\hat{{{\cal B}}}}(P_{\hat\gamma})$ is a $p'$-group, this short exact sequence uniquely splits (see \cite[Lemma 45.6]{T}) and thus there is a subgroup $\hat F$ of $N_{\hat{{{\cal B}}}_{\hat\gamma}^*}(P)$ containing $k^*$ such that the restriction to $\hat F/k^*$ of the homomorphism $N_{\hat{{{\cal B}}}_{\hat\gamma}^*}(P)/k^*\rightarrow F_{\hat{{{\cal B}}}}(P_{\hat\gamma})$ is a group isomorphism.

We consider the conjugation action of $\hat F$ on $P\hat i$, the semidirect product $(P\hat{i})\rtimes \hat F$ and the ${\cal O}$-algebra homomorphism $\theta: {\cal O}_*((P\hat{i})\rtimes \hat F)\rightarrow \hat{\cal A}_{\hat\gamma}$ induced by the inclusions $P\hat{i}\subset \hat{\cal A}_{\hat\gamma}$ and $\hat F\subset \hat{\cal A}_{\hat\gamma}$. Similar to the paragraph above, we have another uniquely split short exact sequence
$$1\rightarrow (1+J(\hat{\cal A}_{\hat\gamma}^P))\rightarrow N_{\hat{\cal A}_{\hat\gamma}^*}(P)/k^*\rightarrow F_{\hat{\cal A}}(P_{\tilde{\hat\gamma}})\rightarrow 1. $$
Clearly the inclusion $\hat F\subset N_{\hat{{{\cal B}}}_{\hat\gamma}^*}(P)$ induces a group isomorphism $F_{\hat{\cal A}}(P_{\tilde{\hat\gamma}})\cong \hat F/k^*$, which is a section of the homomorphism
$N_{\hat{\cal A}_{\hat\gamma}^*}(P)/k^*\rightarrow F_{\hat{\cal A}}(P_{\tilde{\hat\gamma}})$. Let $\hat L$ be the inverse image of $\hat E_G(P_{\tilde\gamma})^\circ$ through isomorphism \ref{eq:4.3.1}. The inclusion $\hat L\subset N_{\hat{\cal A}_{\hat\gamma}^*}(P)$ induces a group isomorphism $F_{\hat{\cal A}}(P_{\tilde{\hat\gamma}})\cong \hat L/k^*$, which is also a section of the homomorphism
$N_{\hat{\cal A}_{\hat\gamma}^*}(P)/k^*\rightarrow F_{\hat{\cal A}}(P_{\tilde{\hat\gamma}})$.
So $\hat F$ and $\hat L$
are conjugate in $\hat{\cal A}_{\hat\gamma}^*$. Thus the homomorphism $\theta$ is surjective and an ${\cal O}$-algebra isomorphism. Note that $\theta$ maps ${\cal O}_*((Q\hat{i})\rtimes \hat F)$ onto $ \hat{{{\cal B}}}_{\hat\gamma}$.

Clearly the inclusions $\hat F\subset N_{\hat{{{\cal B}}}_{\hat\gamma}^*}(Q)$ and $\hat F\subset N_{\hat{{{\cal B}}}_{\hat\gamma}^*}(P)$ induce $k^*$-group isomorphisms ${\frak f}_\delta: \hat F\cong \hat F_{\hat{{{\cal B}}}}(Q_{\hat\delta})$ and ${\frak f}_\gamma: \hat F\cong \hat F_{\hat{\cal A}}(P_{\tilde{\hat\gamma}})$, respectively. On the other hand, the obvious inclusion $N_{\hat{{{\cal B}}}_{\hat\gamma}^*}(P)\subset N_{\hat{{{\cal B}}}_{\hat\gamma}^*}(Q)$ induces a group isomorphism $F_{\hat{\cal A}}(P_{\tilde{\hat\gamma}})\cong F_{\hat{{{\cal B}}}}(Q_{\hat\delta})$ which maps any automorphism on $P$ onto its restriction to $Q$. Since it follows from \cite[Theorem 3.1]{P2} and \cite[Lemma 1.17]{KP} that $E_H(Q_\delta)$ and $E_G(P_{\tilde\gamma})$ are equal to $F_{\hat{{{\cal B}}}}(Q_{\hat\delta})$ and $F_{\hat{\cal A}}(P_{\tilde{\hat\gamma}})$ respectively, the group isomorphism $F_{\hat{\cal A}}(P_{\tilde{\hat\gamma}})\cong F_{\hat{{{\cal B}}}}(Q_{\hat\delta})$ coincides with isomorphism \ref{eq:4.5.1}. The inclusion
$N_{\hat{{{\cal B}}}_{\hat\gamma}^*}(P)\subset N_{\hat{{{\cal B}}}_{\hat\gamma}^*}(Q)$ also induces a $k^*$-group isomorphism $\hat{\frak f}_{\gamma, \,\delta}:\hat F_{\hat{\cal A}}(P_{\tilde{\hat\gamma}})\cong \hat F_{\hat{{{\cal B}}}}(Q_{\hat\delta})$ lifting isomorphism \ref{eq:4.5.1}.
Moreover we have $\hat{\frak f}_{\gamma, \,\delta}\circ{\frak f}_\gamma={\frak f}_\delta$. By \cite[Proposition 6.12]{P6}, \cite[2.12.4]{KP} and \cite[Proposition 5.11]{P4}, we have $k^*$-group isomorphisms ${\frak g}_\gamma: \hat E_G(P_{\tilde\gamma})^\circ\cong \hat F_{\hat{\cal A}}(P_{\tilde{\hat\gamma}})$ and ${\frak g}_\delta:\hat E_H(Q_\delta)^\circ\cong \hat F_{\hat{{{\cal B}}}}(Q_{\hat\delta})$, which respectively lift the equalities $E_G(P_{\tilde\gamma})= F_{\hat{\cal A}}(P_{\tilde{\hat\gamma}})$
and $E_H(Q_\delta)=F_{\hat{{{\cal B}}}}(Q_{\hat\delta})$. We set $\hat{\frak e}'_{\delta,\, \gamma}={\frak g}_\gamma^{-1}\circ\hat{\frak f}_{\gamma, \,\delta}^{-1}\circ{\frak g}_\delta$. Since $\hat{\frak e}'_{\delta,\, \gamma}$ and $\hat{\frak e}_{\delta,\, \gamma}$ both lift the isomorphism ${\frak e}_{\delta,\, \gamma}$, we can adjust $\hat{\frak e}_{\delta,\, \gamma}$ by a linear character of $E_G(P_{\tilde\gamma})$ so that $\hat{\frak e}'_{\delta,\, \gamma}$ and $\hat{\frak e}_{\delta,\, \gamma}$ coincide. Summarizing the above, we have a commutative diagram of $k^*$-group homomorphisms \[
\begin{matrix}
\hat{E}_G(P_{\tilde{\gamma}})^\circ & \xrightarrow{{\mathfrak{g}}_\gamma} & \hat{F}_{\hat{\mathcal{A}}}(P_{\tilde{\hat{\gamma}}}) & \xleftarrow{{\mathfrak{f}}_\gamma} & \hat{F} \\
\hat{\mathfrak{e}}_{\delta, \gamma} \Big\uparrow & & \Big\downarrow \hat{\mathfrak{f}}_{\gamma, \delta} & & =\Big\downarrow \\
\hat{E}_H(Q_\delta)^\circ & \xrightarrow{{\mathfrak{g}}_\delta} & \hat{F}_{\hat{\mathcal{B}}}(Q_{\hat{\delta}}) & \xleftarrow{{\mathfrak{f}}_\delta} & \hat{F}
\end{matrix}.
\]
In particular, this shows that there is an ${\cal O}$-algebra isomorphism ${\cal O}_*((P\hat{i})\rtimes \hat F)\cong {\cal O}_*(P\rtimes \hat E_G(P_{\tilde\gamma})^\circ)$ mapping $u\hat i$ onto $u$ for any $u\in P$ and ${\cal O}_*((Q\hat{i})\rtimes \hat F)$ onto ${\cal O}_*(Q\rtimes \hat E_H(Q_\delta)^\circ)$. By composing the inverse of this ${\cal O}$-algebra isomorphism and isomorphism \ref{eq:4.3.1}, we get the desired $\dot G$-graded $P$-interior algebra isomorphism.
\end{proof}

\begin{lem}\label{Cal-A-ga} With the notation as above, there is a $\dot{G}$-graded $P$-interior algebra embedding ${\cal A}_\gamma\rightarrow S\otimes_{\cal O} \hat {\cal A}_{\hat\gamma}$.
\end{lem}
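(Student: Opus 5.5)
The embedding will come from the standard identity $S\otimes_{\cal O} S^\circ\cong {\rm End}_{\cal O}(S)$. Since $S$ is a $P$-interior full matrix algebra, we can write $S={\rm End}_{\cal O}(V)$ for an ${\cal O}P$-lattice $V$ (determinant one, so $V$ is an honest ${\cal O}P$-module). Then $S\otimes_{\cal O} S^\circ\cong{\rm End}_{\cal O}(S)$ as $P$-interior algebras, where $P$ acts on $S$ by $u\cdot s\cdot u^{-1}$. Because $S$ is a primitive Dade $P$-algebra, $S$ has ${\cal O}$ as a $P$-fixed direct summand complement; concretely, the $P$-interior algebra embedding $d':{\cal O}\rightarrow S\otimes_{\cal O} S^\circ$ is the analogue of $d$ used in the proof of Lemma~\ref{Iso-Hat-A-ga}. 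Write $e=d'(1)$. This is a $P$-fixed idempotent, primitive in $(S\otimes_{\cal O} S^\circ)^P$ and local, with $e(S\otimes_{\cal O} S^\circ)e={\cal O}e$, and $ue=eu$ for $u\in P$.

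\textbf{Step 1: the ungraded embedding.} Tensoring $d'$ with ${\rm Res}^G_P({\cal A})$ gives an embedding ${\cal A}\cong {\cal O}\otimes_{\cal O}{\cal A}\rightarrow S\otimes_{\cal O} S^\circ\otimes_{\cal O}{\cal A}=S\otimes_{\cal O}\hat{\cal A}$, namely $a\mapsto e\otimes a$. It preserves the $P$-interior structure since $e$ commutes with the image of $P$. It also preserves the $\dot G$-grading, because the grading on $S\otimes_{\cal O}\hat{\cal A}$ is carried entirely by the ${\cal A}$ factor (paragraph~\ref{Notation1}). Restricting to $i{\cal A}i$ gives a $\dot G$-graded $P$-interior embedding ${\cal A}_\gamma\rightarrow (e\otimes i)(S\otimes_{\cal O}\hat{\cal A})(e\otimes i)$, which in turn sits inside $S\otimes_{\cal O}(1\otimes i)\hat{\cal A}(1\otimes i)$.

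\textbf{Step 2: replace $1\otimes i$ by $\hat i$.} It remains to move the idempotent $e\otimes i$ into $S\otimes \hat i$ up to conjugation by an invertible element of degree $1$ fixed by $P$. Here I would use the pointed group $P_{\hat\gamma}$ determined by $P_\gamma$ through \cite[Theorem 5.3]{P4}, together with equation~\ref{eq:4.2.1}. In $S\otimes_{\cal O}\hat{{\cal B}}=S\otimes S^\circ\otimes{\cal B}$, the point of $P$ determined by $\hat\gamma$ and the unique local point of $P$ on $S$ is the same as the point determined by $\gamma$ and the local point $\{e\}$ of $P$ on $S\otimes S^\circ$. Both arise from the multiplicativity in \cite[Theorem 5.3]{P4}, and $S\otimes S^\circ$ has the unique local point $\{e\}$ because $S$ is Dade, as already used in Lemma~\ref{Iso-Hat-A-ga}. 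Hence $e\otimes i$ and some $s\otimes\hat i'$, with $s$ primitive local in $S^P$ and $\hat i'\in\hat\gamma$, are conjugate by an element $c\in((S\otimes\hat{{\cal B}})^P)^*$. Replacing $\hat i'$ by $\hat i$ is a further conjugation in $(\hat{{\cal B}}^P)^*$. Conjugating by $c$ then gives a $\dot G$-graded $P$-interior isomorphism
\[
(e\otimes i)(S\otimes\hat{\cal A})(e\otimes i)\cong (s\otimes\hat i)(S\otimes\hat{\cal A})(s\otimes\hat i),
\]
and the right-hand side embeds in $S\otimes_{\cal O}\hat{\cal A}_{\hat\gamma}$. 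The grading is preserved because $c$ lies in the $1$-component $S\otimes\hat{{\cal B}}$. Composing with Step 1 yields the lemma.

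\textbf{Main obstacle.} I expect the hardest step to be the point-identification in Step 2: checking that $e\otimes i$ is primitive in $(S\otimes\hat{{\cal B}})^P$ and lies in the same local point as $s\otimes\hat i$. If applying \cite[Theorem 5.3]{P4} directly is awkward, my fallback is to argue via Brauer quotients. I would show that ${\rm Br}_P(e\otimes i)$ and ${\rm Br}_P(s\otimes\hat i)$ are primitive idempotents of $(S\otimes\hat{{\cal B}})(P)\cong S(P)\otimes S^\circ(P)\otimes{\cal B}(P)$, and that they define the same point, using $S(P)\otimes S^\circ(P)\cong {\rm End}_k(S(P))$. Conjugacy then lifts by the standard lifting of idempotents.
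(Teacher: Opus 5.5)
Your proposal is essentially the paper's argument. The paper likewise uses the $P$-interior embedding ${\cal O}\rightarrow S\otimes_{\cal O}S^\circ$ to embed ${\cal A}_\gamma$ into $S\otimes_{\cal O}S^\circ\otimes_{\cal O}{\cal A}_\gamma$, and notes that $P$ has a unique local point on $S\otimes_{\cal O}S^\circ\otimes_{\cal O}{\cal B}_\gamma$, so the point containing $e\otimes i$ is the one coming from $S\otimes_{\cal O}\hat{\cal A}_{\hat\gamma}$; it then factors through $S\otimes_{\cal O}\hat{\cal A}_{\hat\gamma}$ via \cite[2.11.2]{KP}, which is exactly your conjugation by a degree-one $P$-fixed unit.

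Two points to tidy. First, \cite[Theorem 5.3]{P4} only gives an idempotent of that local point lying below $s\otimes\hat i'$ (equivalently below $1_S\otimes\hat i$); it does not say $s\otimes\hat i'$ itself is primitive, but an idempotent below it is all you need for the embedding. Second, cutting down by $1\otimes 1\otimes i$, which is legitimate since $\hat i\le 1\otimes i$, puts both idempotents in $S\otimes_{\cal O}S^\circ\otimes_{\cal O}{\cal B}_\gamma$, where uniqueness of the local point makes your point identification immediate, as in the paper.
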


\begin{proof} Clearly the inclusion map $\hat {\cal A}_{\hat\gamma}\rightarrow S^\circ\otimes_{\cal O} {\cal A}_\gamma$ (see equation \ref{eq:4.2.1}) is a $\dot{G}$-graded $P$-interior algebra embedding and so is the inclusion map $\iota:S\otimes_{\cal O}\hat {\cal A}_{\hat\gamma}\rightarrow S\otimes_{\cal O} S^\circ\otimes_{\cal O} {\cal A}_\gamma.$ By \cite[Theorem 5.3]{P4}, the obvious local point $\{\hat i\}$ of $P$ on $\hat{{{\cal B}}}_{\hat\gamma}$ determines a local point $\hat\gamma'$ of $P$ on $S\otimes_{\cal O} \hat{{{\cal B}}}_{\hat\gamma}$.
Since embeddings preserve localness (see \cite[Proposition 15.1]{T}), there is a local point $\gamma'$ of $P$ on $S\otimes_{\cal O} S^\circ\otimes_{\cal O} {{{\cal B}}}_\gamma$ containing $\hat\gamma'$.

There is a $P$-interior algebra embedding $e: {\cal O}\rightarrow S\otimes_{\cal O} S^\circ$, which induces a $\dot{G}$-graded $P$-interior algebra embedding $e\otimes 1:{\cal A}_\gamma\rightarrow S\otimes_{\cal O} S^\circ\otimes_{\cal O}{\cal A}_\gamma$ mapping $a$ onto $a\otimes d(1)$ for any $a\in {\cal A}_\gamma$.
Obviously $\{i\}$ is a local point of $P$ on ${{{\cal B}}}_\gamma$.
Again since embeddings preserve localness, there is a local point $\gamma''$ of $P$ on $S\otimes_{\cal O} S^\circ\otimes_{\cal O} {{{\cal B}}}_\gamma$ containing the image of $h$ through embedding $e\otimes 1$.

Obviously $\{i\}$ is also the unique local point of $P$ on ${{{\cal B}}}_\gamma$ and then by \cite[Theorem 5.3]{P4}, $P$ has a unique local point on $S\otimes_{\cal O} S^\circ\otimes_{\cal O} {{{\cal B}}}_\gamma$. So $\gamma'$ is equal to $\gamma''$.
Then by \cite[2.11.2]{KP}, the embedding $e\otimes 1$ factors through the embedding $\iota$ to get the embedding in this lemma.
\end{proof}

\begin{thm}\label{Gad-Cal-A-ga} With the notation as above, there is a $\dot{G}$-graded $P$-interior algebra isomorphism
${\cal A}_\gamma\cong S\otimes_{\cal O} {\cal O}_*(P\rtimes \hat E_G(P_{\tilde\gamma})^\circ)$.
\end{thm}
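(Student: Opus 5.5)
The plan is to combine Lemma \ref{Cal-A-ga} with Proposition \ref{Graded-Hat-A-ga}, and then to show that the resulting graded embedding is onto by a rank count. The ungraded isomorphism \ref{eq:4.1.1} will supply the rank.

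First, Lemma \ref{Cal-A-ga} gives a $\dot G$-graded $P$-interior algebra embedding ${\cal F}:{\cal A}_\gamma\rightarrow S\otimes_{\cal O}\hat{\cal A}_{\hat\gamma}$. Proposition \ref{Graded-Hat-A-ga} gives a $\dot G$-graded $P$-interior algebra isomorphism $\hat{\cal A}_{\hat\gamma}\cong {\cal O}_*(P\rtimes \hat E_G(P_{\tilde\gamma})^\circ)$. Tensoring the latter with $S$ is compatible with the grading described in paragraph \ref{Notation1}. Composing, we obtain a $\dot G$-graded $P$-interior algebra embedding
$${\cal G}: {\cal A}_\gamma\rightarrow S\otimes_{\cal O}{\cal O}_*(P\rtimes \hat E_G(P_{\tilde\gamma})^\circ).$$
It remains to show that ${\cal G}$ is surjective, or equivalently that ${\cal G}(1)=1$.

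Second, since ${\cal G}$ is an embedding, its image is ${\cal G}(1)\,{\cal C}\,{\cal G}(1)$, where ${\cal C}$ denotes the target algebra. By \ref{eq:4.1.1}, ${\cal A}_\gamma$ is isomorphic as an ${\cal O}$-module (indeed as a $P$-interior algebra) to ${\cal C}$. Hence ${\cal A}_\gamma$ and ${\cal C}$ are ${\cal O}$-free of the same finite rank. So ${\cal G}(1){\cal C}{\cal G}(1)$ is a direct summand of ${\cal C}$ of full rank, and therefore equals ${\cal C}$. Concretely, ${\cal C}=e{\cal C}e\oplus e{\cal C}(1-e)\oplus(1-e){\cal C}e\oplus(1-e){\cal C}(1-e)$ with $e={\cal G}(1)$. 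Equality of ranks forces $(1-e){\cal C}(1-e)=0$, so $1-e=0$. Thus ${\cal G}$ is an isomorphism of $\dot G$-graded $P$-interior algebras, because it already preserves the grading componentwise.

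The main subtlety lies in the first step. One must check that the identification in Proposition \ref{Graded-Hat-A-ga}, which involved adjusting $\hat{\frak e}_{\delta,\gamma}$ by a linear character, is the same $\dot G$-grading on ${\cal O}_*(P\rtimes \hat E_G(P_{\tilde\gamma})^\circ)$ as the one fixed in the preceding paragraph. Otherwise the composite would not be graded with respect to the intended structure. I expect this to follow directly from how the grading was defined, through $Q\rtimes\hat E_H(Q_\delta)$ being the $1$-component. The rank argument itself is routine, and needs no grading information beyond what ${\cal G}$ already respects.
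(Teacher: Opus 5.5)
Your proof is correct. It differs from the paper only in how you show that the graded embedding ${\cal G}$ is onto.

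\textbf{The paper's route.} The paper argues locally. Via Lemma \ref{k*-Iso}, it checks that the unity of $S\otimes_{\cal O}{\cal O}_*(P\rtimes \hat E_G(P_{\tilde\gamma})^\circ)$ lies in the unique local point of $P$ on the $1$-component $S\otimes_{\cal O}{\cal O}_*(Q\rtimes \hat E_H(Q_\delta)^\circ)$. So $1$ is primitive in the $P$-fixed points of that $1$-component. The nonzero $P$-fixed idempotent ${\cal G}(i)$ of the $1$-component is therefore forced to equal $1$.

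\textbf{Your route.} You instead compare ${\cal O}$-ranks, using the ungraded isomorphism \ref{eq:4.1.1}. This involves the same $S$, since $\hat{\cal A}$ was built from it. The Peirce decomposition of the target with respect to $e={\cal G}(1)$ then kills $1-e$.

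\textbf{What each buys.} Your argument is more elementary: it needs no analysis of local points on the $1$-component. The paper's argument is intrinsic to the graded target and does not use the ungraded isomorphism \ref{eq:4.1.1}. That is why the same argument can be reused in Lemma \ref{Gad-Cal-A-ga-p'}, where no analogue of \ref{eq:4.1.1} for the whole of ${\cal A}_\gamma$ is available in advance to supply the rank.

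\textbf{The grading subtlety.} The issue you flag is harmless. The map $\hat{\frak e}_{\delta,\, \gamma}$ is a $k^*$-group isomorphism onto $\hat E_G(P_{\tilde\gamma})$, whatever linear character of $E_G(P_{\tilde\gamma})$ is used to adjust it. Hence the image of $Q\rtimes \hat E_H(Q_\delta)$ is always the subgroup $Q\rtimes \hat E_G(P_{\tilde\gamma})$. The $\dot G$-grading, given by cosets modulo this subgroup (that is, by $P/Q\cong\dot G$), is therefore unchanged. In any case, Proposition \ref{Graded-Hat-A-ga} already states a graded isomorphism with respect to the grading fixed just before it, so you may simply cite it.
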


\begin{proof} By  Proposition \ref{Graded-Hat-A-ga} and Lemma \ref{Cal-A-ga}, there is a $\dot{G}$-graded $P$-interior algebra embedding
${\cal A}_\gamma\rightarrow S\otimes_{\cal O} {\cal O}_*(P\rtimes \hat E_G(P_{\tilde\gamma})^\circ)$.  By Lemma \ref{k*-Iso}, it is easy to check that the identity element of $S\otimes_{\cal O} {\cal O}_*(P\rtimes \hat E_G(P_{\tilde\gamma})^\circ)$ is contained in the unique local point of $P$ on $S\otimes_{\cal O} {\cal O}_*(Q\rtimes \hat E_H(Q_\delta)^\circ)$. Thus this embedding must be an isomorphism.
\end{proof}

\section  {Proof of the main results}

We continue to keep the notation in paragraph \ref{Notation12} in this section. In addition, we assume that $P$ is abelian and that the block $b$ of $H$ is inertial. In this section, we give a proof of Theorem \ref{Main1} and Corollary \ref{Main2}.

\begin{order}\label{NotationF5}{\rm
Set $J=P H$, $\dot{J}=J/H$ and ${\cal C}_\gamma=i({\cal O} J)i$. Clearly ${\cal C}_\gamma$ is a $\dot J$-graded $P$-interior algebra. By \cite[Proposition 6.2]{KP}, the obvious $P$-algebra homomorphism ${\cal O} H\rightarrow {\cal O} J$ induced by the inclusion $H\subset J$ is a strict semicovering and there is a unique local point $\tilde\gamma$ of $P$ on ${\cal O} J$ containing $\gamma$. Since $P_\gamma$ is a defect pointed group of $J_\alpha$, by \cite[Corollary 6.3]{KP} $P_{\tilde\gamma}$ is a defect pointed group of $J_\alpha$ on ${\cal O} J$. In particular, ${\cal C}_\gamma$ is a source algebra of the block algebra ${\cal O} J b$.
By Theorem \ref{Gad-Cal-A-ga}, we get a determinant one $P$-interior full matrix algebra $S$ over ${\cal O}$ such that there is a $\dot{J}$-graded $P$-interior algebra isomorphism
\begin{equation}
{\cal C}_\gamma\cong S\otimes_{\cal O} {\cal O}_*(P\rtimes \hat E_J(P_{\tilde\gamma})^\circ).
\tag*{5.1.1}
\label{eq:5.1.1}
\end{equation}
Moreover $S$ is unique up to $P$-interior algebra isomorphisms and it has a $P$-stable ${\cal O}$-basis containing the unity of $S$.}
\end{order}

\begin{order}{\rm  Let $R_\varepsilon$ be a local pointed group
such that $Q_\delta\leq R_\varepsilon\leq P_\gamma$. Take some $h\in
\varepsilon$ such that $ih=hi=h$ and $hj=jh=j$. Set
${\cal A}_\varepsilon=h{\cal A}h$ and ${\cal
B}_\varepsilon=h{{{\cal B}}}h$. Clearly ${\cal
A}_\varepsilon=h{\cal A}h$ is a $\dot{G}$-graded $R$-interior algebra. Set $\hat{\cal A}=S^\circ\otimes_{\cal O} {\rm Res}^G_P(
{\cal A})$ and $\hat{{{\cal B}}}=S^\circ\otimes_{\cal O} {\rm Res}^G_P(
{{{\cal B}}}).$ Then $\hat{\cal A}$ is a $\dot G$-graded $P$-interior algebra.
By \cite[Theorem 5.3]{P4}, $R_\varepsilon$ determines
a unique local pointed group $R_{\hat\varepsilon}$ on $\hat{{{\cal B}}}$ such
that for some $\hat{h}\in \hat\varepsilon$ and some element $\ell$ of
the unique local point $\varepsilon_S$ of $R$ on $S$, we have $\hat{h}(\ell\otimes
h)=(\ell \otimes h)\hat{h}=\hat{h}.$ Set ${\hat{\cal A}}_{\hat\varepsilon}=\hat{h}{\hat{\cal A}}\hat{h}$ and ${\hat{{{\cal B}}}}_{\hat\varepsilon}=\hat{h}{\hat{{{\cal B}}}}\hat{h}$. Clearly ${\hat{\cal A}}_{\hat\varepsilon}$ is a $\dot{G}$-graded $R$-interior algebra.}
\end{order}

\begin{order} {\rm Similarly we have the local pointed groups $P_{\hat\gamma}$ and $Q_{\hat\delta}$ on $\hat {{{\cal B}}}$ determined by the pointed groups $P_\gamma$ and $Q_\delta$ on ${{{\cal B}}}$ respectively, the $\dot{G}$-graded $P$-interior algebra $\hat{\cal A}_{\hat\gamma}=\hat{i}\hat{\cal A}\hat{i}$ for some $\hat i\in \hat\gamma$, and the $\dot{G}$-graded $Q$-interior algebra $\hat{\cal A}_{\hat\delta}=\hat{j}\hat{\cal A}\hat{j}$ for some $\hat j\in \hat\delta$.
By Lemma \ref{Hat-i}, we have $\hat\gamma\subset \hat\varepsilon\subset
\hat\delta$. So as we did in the paragraph above Lemma \ref{k*-Iso}, we identify ${\hat{\cal A}}_{\hat\gamma}$ and ${\hat{\cal A}}_{\hat\varepsilon}$ through some suitable $\dot G$-graded $R$-interior algebra isomorphism, and ${\hat{\cal A}}_{\hat\varepsilon}$ and ${\hat{\cal A}}_{\hat\delta}$ through some suitable $\dot G$-graded $Q$-interior algebra isomorphism. In this sense, we have ${\hat{\cal A}}_{\hat\gamma}={\hat{\cal A}}_{\hat\varepsilon}={\hat{\cal A}}_{\hat\delta}$ and $\hat i=\hat h=\hat j$.}
\end{order}

\begin{lem}\label{Inj-map} With the above notation, we have $E_{G,\,\dot{G}}(R_\varepsilon)\subset {F}_{{\cal A},\,\dot{G}}(R_{\varepsilon})$ and $E_{G,\,\dot{G}}(R_\varepsilon)\subset {F}_{\hat{\cal A},\,\dot{G}}(R_{\hat\varepsilon})$.
\end{lem}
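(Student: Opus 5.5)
The plan is to realise each element of $E_{G,\,\dot G}(R_\varepsilon)$ by an explicit invertible homogeneous element of ${\cal A}_\varepsilon$, and then transport it to $\hat{\cal A}_{\hat\varepsilon}$.

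\emph{Realising a morphism in ${\cal A}_\varepsilon$.} Let $(\varphi^R_{R,\,x},\,\dot x)\in E_{G,\,\dot G}(R_\varepsilon)$, so that $x\in N_G(R_\varepsilon)$. Since $x$ stabilizes $\varepsilon$, the idempotents $xhx^{-1}$ and $h$ both lie in $\varepsilon$. Hence there is $c\in ({{{\cal B}}}^R)^*$ with $c\,xhx^{-1}c^{-1}=h$. Set $a=c\,xh=h\,c\,x h$. This is an element of $h({\cal O} Hx b)h$, the $\dot x$-component of ${\cal A}_\varepsilon$. It is invertible in ${\cal A}_\varepsilon$, with inverse $hx^{-1}c^{-1}h$. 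For $u\in R$, since $c$ commutes with $R$ we have
\[
a(uh)a^{-1}=c\,xux^{-1}\,xhx^{-1}c^{-1}=\varphi^R_{R,\,x}(u)\,h .
\]
Thus $a\in{\cal N}^{\dot x}_{{\cal A}_\varepsilon^*}(R)$ and $(\varphi^R_{R,\,x},\dot x)\in F_{{\cal A},\,\dot G}(R_\varepsilon)$. Both sides are groups under the same composition law, so this gives the first inclusion.

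\emph{Transfer to $\hat{\cal A}_{\hat\varepsilon}$.} Since $\varphi=\varphi^R_{R,\,x}$ is induced by $x\in N_G(R)$, and $S$ is $E_G(P_{\tilde\gamma})$-stable and $P$-stable, I will use the following step. The twist ${\rm Res}_\varphi(S)$ should be isomorphic to $S$ as an $R$-interior algebra; equivalently, there is $s\in S^*$ with $s\,u\,s^{-1}=\varphi(u)$ in $S$ for all $u\in R$. To get this, I would use Lemma \ref{N_G(R)}: it writes $x=yz$ with $y\in N_G(P_\gamma)$ and $z\in C_G(R_\varepsilon)$, so on $R$ we have $\varphi=\varphi_{R,\,y}$. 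The element $y$ induces an element of $E_G(P_{\tilde\gamma})$ (using $N_G(P_{\tilde\gamma})=N_G(P_\gamma)$ in the $J=PH$ setting), so stability of $S$ yields $s'\in S^*$ realising $y$-conjugation on $P$, and hence on $R$. Because $\varepsilon_S$ is the unique local point of $R$ on $S$, we can then move $\ell$ by an element of $(S^R)^*$ so that the conjugate $s'\ell s'^{-1}$ equals $\ell$. This gives $s\in S^*$ with $s\ell s^{-1}=\ell$ normalizing $R\ell$ via $\varphi$. The element $\ell s^{\circ}\ell\otimes a$, where $s^{\circ}$ denotes $s^{-1}$ viewed in $S^\circ$, is then invertible in $\ell S^\circ\ell\otimes{\cal A}_\varepsilon$, lies in the $\dot x$-component, and normalizes $R(\ell\otimes h)$ via $\varphi$.

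\emph{Cutting down to $\hat h$, the main obstacle.} It remains to compress this normalizing element from $\ell\otimes h$ down to $\hat h$. By \cite[Theorem 5.3]{P4}, $\hat\varepsilon$ is the unique local point of $R$ on $\hat{{{\cal B}}}$ below $\ell\otimes h$ corresponding to $\varepsilon$. Conjugation by our element preserves $R$, twisted by $\varphi$, and it preserves the correspondence. Hence it maps $\hat h$ into a conjugate of $\hat\varepsilon$, and the real issue is to show that this conjugate is $\hat\varepsilon$ itself. I would try to show it fixes $\hat\varepsilon$ by checking images under ${\rm Br}_R$, using $x\in N_G(R_\varepsilon)$ together with the fact that the local point of $R$ on $S$ is unique. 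Granting that, we can adjust by an element of $(\hat{{{\cal B}}}^R)^*$ so that $\hat h$ is fixed, exactly as in the first step. The compressed element $\hat h(\cdot)\hat h$ then lies in ${\cal N}^{\dot x}_{\hat{\cal A}_{\hat\varepsilon}^*}(R)$ and realises $(\varphi,\dot x)$, which gives the second inclusion.
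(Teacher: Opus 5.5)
Your first inclusion is correct and is the paper's argument: your $c$ is the paper's $a_y^{-1}$ and your $a$ is its $d_y$. Your transfer step also follows the paper's route. You use Lemma~\ref{N_G(R)} to replace $x$ by an element of $N_G(P_\gamma)$, and the uniqueness of the local point of $R$ on $S$ to get $s$ with $s\ell s^{-1}=\ell$ (the paper phrases this as $T=\ell S\ell$ being $N_G(R_\varepsilon)$-stable). Then you form $w=\ell s^{\circ}\ell\otimes a$ and correct by an element of $(\hat{{\cal B}}^R)^*$.

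However, the one step that actually needs an argument is $w\hat h w^{-1}\in\hat\varepsilon$, and there you write ``granting that''. So the second inclusion is not proved. The missing observation is short, and no ${\rm Br}_R$-computation using $x\in N_G(R_\varepsilon)$ is needed.

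The elements $w^{\pm1}$ lie in $(\ell\otimes h)\hat{\cal A}(\ell\otimes h)$ in degrees $\dot x^{\pm1}$, and $w$ normalizes $R(\ell\otimes h)$ via $\varphi$. Hence conjugation by $w$ is an automorphism of $((\ell\otimes h)\hat{{\cal B}}(\ell\otimes h))^R$ carrying ${\rm Tr}_{R'}^R$ to ${\rm Tr}_{\varphi(R')}^R$. So $w\hat hw^{-1}$ is again a primitive local idempotent of $\hat{{\cal B}}^R$ lying below $\ell\otimes h$.

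Now $R$ has exactly one local point on $\ell S^\circ\ell\otimes h{\cal B}h$. Indeed, its Brauer quotient at $R$ is isomorphic to $(h{\cal B}h)(R)$, which is a local algebra, because ${\rm Br}_R(\ell)$ is primitive in the simple algebra $S(R)$ and $h$ is primitive in ${\cal B}^R$. This is the uniqueness in \cite[Theorem 5.3]{P4} that the paper invokes, and it gives $w\hat hw^{-1}\in\hat\varepsilon$ at once. Your qualifier ``corresponding to $\varepsilon$'' obscured this: there is no other local point below $\ell\otimes h$ to exclude. Your final adjustment by an element of $(\hat{{\cal B}}^R)^*$ then finishes exactly as the paper's $c_y=e_y^{-1}(s_y\otimes d_y)\hat h$.

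A smaller point concerns the stability of $S$. In Section~5, $S$ is produced by \ref{eq:5.1.1} for $J=PH$, so paragraph~\ref{p-Cal-A-ga} only yields $E_J(P_{\tilde\gamma})$-stability, and your $y\in N_G(P_\gamma)$ need not lie in $J$. The paper gets the required $N_G(P_\gamma)$-stability of $S$ differently. It notes that $y$ normalizes $J$ and fixes $\gamma$, so ${\cal C}_\gamma$ is $N_G(P_\gamma)$-stable, and then applies the uniqueness of $S$ in \ref{eq:5.1.1}. That is the justification you need in place of ``$E_G(P_{\tilde\gamma})$-stability''.
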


\begin{proof} Given $(\phi,\, \dot x)$ in ${ E}_{G, \, \dot G}(R_\varepsilon)$, $\dot x$ has a representative $y$ in $N_G(R_\varepsilon)$ such that we have $\phi=\varphi_{R,\, y}^R$. There is some invertible element $a_y$ of
$({\cal O} H)^R$ such that $yhy^{-1}=a_y ha_y^{-1}$. Thus
$a_y^{-1}y$ commutes with $h$. Set $d_y=(a_y^{-1}y)h$. Clearly $d_y$ belongs to ${\cal N}_{{\cal A}_\varepsilon^*}^{
\dot{x}}(R)$ and $d_yud_y^{-1}=\phi(u)h$. So $(\phi,\, \dot x)\in {F}_{{\cal A},\,\dot{G}}(R_{\varepsilon})$ and $E_{G,\,\dot{G}}(R_\varepsilon)\subset {F}_{{\cal A},\,\dot{G}}(R_{\varepsilon})$.

Set $T=\ell S \ell$. Clearly $T$ is a Dade $R$-algebra and a $R$-interior algebra. Since the $\dot J$-graded $P$-interior algebra ${\cal C}_\gamma$ is $N_G(P_\gamma)$-stable, by the uniqueness of $S$ the $P$-interior algebra $S$ is $N_G(P_\gamma)$-stable. Since $N_G(R_\varepsilon)=N_{N_G(R_\varepsilon)}(P_\gamma)C_G(R_\varepsilon)$ (see Lemma \ref{N_G(R)}), by the uniqueness of the local point of $R$ on $S$, $T$ is $N_G(R_\varepsilon)$-stable.
So there is an invertible element $s_y$ in $T$ such that $s_y u s_y^{-1}=\phi(u)1_{T}$.

Clearly we have $\hat{h}^{(s_y\otimes d_y)^{-1}}(1_T\otimes
h)=(1_T \otimes h)\hat{h}^{(s_y\otimes d_y)^{-1}}=\hat{h}^{(s_y\otimes d_y)^{-1}}$ and $\hat{h}^{(s_y\otimes d_y)^{-1}}$ is contained in some local point of $R$ on $\hat {{{\cal B}}}$. By the uniqueness of $R_{\hat\varepsilon}$, $\hat{h}^{(s_y\otimes d_y)^{-1}}$ belongs to $\hat\varepsilon$. In particular, there is some invertible element $e_y$ of $\hat{{{\cal B}}}^R$
such that $(s_y\otimes d_y)\hat{h} (s_y\otimes d_y)^{-1}= e_y \hat{h}
e_y^{-1}$. Set $c_y=e_y^{-1}(s_y\otimes d_y)\hat{h}$. Then $c_y$ is an
invertible element of the $\dot{x}$-component of $\hat{\cal A}_{\hat\varepsilon}$ and we have
$c_y u c_y^{-1}=\phi(u)\hat h$ for any $u\in R$. Thus we have $(\phi,\,\dot x)\in F_{\hat{\cal A},\,\dot G}(R_{\hat\varepsilon})$ and $E_{G,\,\dot{G}}(R_\varepsilon)\subset F_{\hat{\cal A},\,\dot G}(R_{\hat\varepsilon})$.
\end{proof}

\begin{order}{\rm The isomorphism $\tau$ of categories in Proposition \ref{Cal-L} induces a group isomorphism $\chi_R: E_{G,\,\dot G}(R_\varepsilon)\cong E_{{\cal L},\, {\cal E}}(R),$ where $E_{{\cal L},\, {\cal E}}(R)$ is the automorphism group of $R_{\{1\}}$ in ${\cal E}_{(P_{\{1\}},\,Q,\,{\cal L})}$. We also have group isomorphisms $\theta_R: N_{\cal L}(R)/Q\cong E_{{\cal L},\, {\cal E}}(R)$ and $\vartheta_R: {F}_{\hat{\cal A},\,\dot{G}}(R_{\hat\varepsilon})\cong {\cal F}_{\hat{\cal A}}(R_{\hat\varepsilon})$ (see paragraphs \ref{Def-hat-L} and \hyperlink{prop2111}{2.9.1}. respectively).  We denote by $\lambda_R$ the composition of $\theta_R$, $\chi_R^{-1}$, the inclusion map $E_{G,\,\dot{G}}(R_\varepsilon)\subset {F}_{\hat{\cal A},\,\dot{G}}(R_{\hat\varepsilon})$ and
$\vartheta_R$.}
\end{order}

\begin{order}{\rm The inclusion $N_G(R_\varepsilon)\subset N_G(Q_\delta)$ induces a group homomorphism $\theta:E_{G,\,\dot G}(R_\varepsilon)\rightarrow E_{G,\,\dot G}(Q_\delta)$ mapping $(\varphi^R_{R, \,x^{-1}},\,\dot x)$ onto $(\varphi^Q_{Q, \,x^{-1}},\,\dot x)$ for any $x\in N_G(R_\varepsilon)$. Similarly the inclusion $N_{\cal L}(R)\subset {\cal L}$ induces a group homomorphism $\vartheta: E_{{\cal L},\,{\cal E}}(R)\rightarrow E_{L,\,{\cal E}}(Q)$. We have $\vartheta\circ\chi_R=\chi_Q\circ \theta$ and then the following commutative diagram of group homomorphisms
\begin{equation}
\begin{matrix}
N_{\mathcal{L}}(R)/Q & \xrightarrow{\lambda_R} & \mathcal{F}_{\hat{\mathcal{A}}}(R_{\hat{\varepsilon}}) \\
\Big\downarrow & & \Big\downarrow \\
\mathcal{L}/Q & \xrightarrow{\lambda_Q} & \mathcal{F}_{\hat{\mathcal{A}}}(Q_{\hat{\delta}}),
\end{matrix}
\tag*{5.6.1}
\label{eq:dia-5.6.1}
\end{equation}
where the left downarrow is induced by the inclusion $N_{\cal L}(R)\subset {\cal L}$ and the right downarrow is induced by the inclusion ${\cal N}_{\hat{\cal A}_{\hat\varepsilon}^*}(R)\subset {\cal N}_{\hat{\cal A}_{\hat\delta}^*}(Q)$}
\end{order}

\begin{order}{\rm The canonical homomorphism
${\cal N}_{\hat{{\cal A}}_{\hat\delta}^*}(Q)\rightarrow {\cal N}_{\hat{{\cal A}}_{\hat\delta}^*}(Q)/k^*$ maps $(\hat{{{\cal B}}}_{\hat\delta}^Q)^*$ onto $(\hat{{{\cal B}}}_{\hat\delta}^Q)^*/k^*$ and induces a canonical group isomorphism
\begin{equation}
\Big({\cal N}_{\hat{{\cal A}}_{\hat\delta}^*}(Q)/k^*\Big)\Big/\Big((\hat{{{\cal B}}}_{\hat\delta}^Q)^*/k^*\Big)\cong {\cal N}_{\hat{{\cal A}}_{\hat\delta}^*}(Q)/(\hat{{{\cal B}}}_{\hat\delta}^Q)^*.
\tag*{5.7.1}
\label{eq:5.7.1}
\end{equation}
Set ${\frak M}={\cal N}_{\hat{{\cal A}}_{\hat\delta}^*}(Q)/k^*$ and ${\frak K}=(\hat{{{\cal B}}}_{\hat\delta}^Q)^*/k^*$. By composing the canonical surjective homomorphism ${\cal L}\rightarrow {\cal L}/Q$, the homomorphism $\lambda_Q$ and the inverse of this canonical group isomorphism, we get a group homomorphism ${\frak p}: {\cal L}\rightarrow {\frak M}/{\frak K}.$
There is an injective group homomorphism $\frak{i}: P\rightarrow \frak M$ mapping $u$ onto the image of $u\hat i$ in $\frak M$ for any $u\in P$.}
\end{order}

\begin{lem}\label{Lift-Cal-L-FrM} With the notation as above, the homomorphism ${\frak p}$ can be lifted to an injective group homomorphism
${\cal L}\rightarrow \frak M$, which extends the homomorphism $\frak i$.
\end{lem}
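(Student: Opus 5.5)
We lift ${\frak p}$ with a Gaschütz/Frattini-type argument. The extension ${\frak K}\to{\frak M}\to{\frak M}/{\frak K}$ has kernel ${\frak K}=(\hat{{\cal B}}_{\hat\delta}^Q)^*/k^*\cong 1+J(\hat{{\cal B}}_{\hat\delta}^Q)$, which is a pro-$p$ group. The group $P$ is a Sylow $p$-subgroup of ${\cal L}$: the kernel of $\pi$ is $Q$ and the image of $P$ in ${\cal E}$ is $P/Q$; since ${\cal E}$ is an extension of the $p'$-group $E_H(Q_\delta)$ by $P/Q\cong\dot G$-part, $P/Q$ is Sylow in ${\cal E}$.

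\emph{Step 1: the restriction to $P$.} First check that ${\frak p}|_P$ agrees with $P\xrightarrow{\frak i}{\frak M}\to{\frak M}/{\frak K}$. For $u\in P$, the element $u\hat i$ lies in the $\dot u$-component of $\hat{\cal A}_{\hat\delta}$ and normalizes $Q\hat i$ by conjugation, inducing the fusion $(\varphi^Q_{Q,u},\dot u)$. Under $\chi_Q$ and $\theta_Q$, this fusion corresponds to the image of $u$ in ${\cal L}/Q$. This is because $\pi(u)$ is the image of $u$ in ${\cal E}$ by Proposition \ref{Cal-L}, and the inclusion of Lemma \ref{Inj-map} sends $(\varphi^Q_{Q,u},\dot u)$ to the class of $d_u$, which we may take to be $u\hat i$ since $u$ fixes $\hat\delta$. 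So ${\frak i}$ is a lift of ${\frak p}|_P$, and it is injective.

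\emph{Step 2: the pull-back extension.} Form the pull-back ${\frak M}_{\cal L}=\{(x,m)\in{\cal L}\times{\frak M}\mid {\frak p}(x)=\bar m\}$. This is an extension $1\to{\frak K}\to{\frak M}_{\cal L}\to{\cal L}\to1$, and Step 1 gives a splitting $u\mapsto(u,{\frak i}(u))$ over $P$. Lifting ${\frak p}$ amounts to splitting this extension over ${\cal L}$ compatibly with the given splitting over $P$. Since ${\frak K}$ is not abelian, we filter it by the normal subgroups $1+J(\hat{{\cal B}}_{\hat\delta}^Q)^n$, whose successive quotients are abelian $p$-groups (finite modulo $J({\cal O})$-powers). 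We then use completeness to pass to the limit.

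\emph{Step 3: extending the splitting from $P$ to ${\cal L}$.} At each abelian layer $V$, which is an ${\cal L}$-module since ${\frak K}$ is normal in ${\frak M}_{\cal L}$, the obstruction to splitting lies in $H^2({\cal L},V)$. The restriction map to $H^2(P,V)$ is injective because $P$ is Sylow and $V$ is a $p$-group. The class vanishes on $P$, so it vanishes on ${\cal L}$. Splittings are moreover unique up to $H^1$, and $H^1({\cal L},V)\to H^1(P,V)$ is also injective. Hence we can adjust the splitting at each layer so that it restricts to the image of ${\frak i}$: take any splitting $s$, compare $s|_P$ with ${\frak i}$ to get a class in $H^1(P,V)$ that is ${\cal L}$-stable, and correct $s$ by a cocycle on ${\cal L}$ whose restriction is that class. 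Stability holds because both $s|_P$ and ${\frak i}$ come from the single extension ${\frak M}_{\cal L}$, and the correcting cocycle on ${\cal L}$ comes from transfer/corestriction, since $[{\cal L}:P]$ is invertible on $V$.

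This compatible adjustment is the main obstacle. It requires the layer-by-layer choices to be coherent, so that we can pass to the inverse limit using that $\hat{{\cal B}}_{\hat\delta}^Q$ is $J({\cal O})$-adically complete. If the cohomological bookkeeping becomes too heavy, a fallback is to apply \cite[Lemma 45.6]{T}-type unique splitting to $O^{p}$-parts, or to work with the Hall $p'$-complement $\hat F$ constructed in Proposition \ref{Graded-Hat-A-ga}. Injectivity of the resulting lift ${\cal L}\to{\frak M}$ then follows: its kernel maps trivially under ${\frak p}$, hence lies in $Q$, on which the lift equals ${\frak i}$, which is injective.
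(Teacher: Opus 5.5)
\textbf{There is a genuine gap in Step 3, at exactly the point you flag as the main obstacle.} Your skeleton matches the paper's Lemma \ref{L-M}: filter $\mathfrak K$ by $1+J(\hat{\mathcal B}_{\hat\delta}^Q)^n$, lift layer by layer using that restriction to the Sylow subgroup is injective on cohomology with $p$-torsion coefficients, and pass to the inverse limit. Step 1 and the $H^2$ argument are fine.

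The claim that the difference class in $H^1(P,V)$ is stable ``because both $s|_P$ and $\mathfrak i$ come from the single extension'' is false. A splitting over a Sylow subgroup need not extend to $\mathcal L$, even up to $V$-conjugacy. For example, take $\mathcal L=P\rtimes C$, the split extension $V\times\mathcal L$ with trivial action on $V$, and the splitting $u\mapsto (f(u),u)$ of $P$, where $f\colon P\to V$ is a homomorphism that is not $C$-invariant. No splitting of $\mathcal L$ restricts to it.

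Stability of your class is equivalent to $\mathfrak i$ being compatible with $\mathcal L$-fusion, which is condition 5.9.1: for every $R\le P$ and $x\in\mathcal L$ with $R^x\le P$, there is $y_x\in\mathfrak M$ lifting $\mathfrak p(x)$ with $\mathfrak i(u^x)=\mathfrak i(u)^{y_x}$ for $u\in R$. This is the real block-theoretic content of the lemma, and your proposal never proves it. The paper does it as follows:
\begin{enumerate}
\item Using $Q\trianglelefteq\mathcal L$ and Burnside's control of fusion for the abelian Sylow subgroup $P$, it reduces to the case $Q\le R$ and $R^x=R$.
\item It then takes $y_x$ to be the image in $\mathfrak M$ of a representative of $\lambda_R(x')$ in $\mathcal N_{\hat{\mathcal A}^*_{\hat\varepsilon}}(R)$.
\item Such a representative with the right fusion exists by Lemma \ref{Inj-map}, applied to the intermediate pointed groups $Q_\delta\le R_\varepsilon\le P_\gamma$.
\item Its image lifts $\mathfrak p(x)$ because of the commutative diagram 5.6.1 relating $\lambda_R$ and $\lambda_Q$.
\end{enumerate}

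\textbf{A second input is also missing.} Even with 5.9.1 at the bottom level, the condition does not propagate up your filtration for free. At layer $n+1$ you need it relative to the already constructed $\sigma_n$. A lift $\tilde x$ of $\sigma_n(x)$ differs from the image of $y_x$ by an element of all of $\mathfrak K/\mathfrak K_{n+1}$, not just of the current layer $\mathfrak K_n/\mathfrak K_{n+1}$. So the cocycle $R\to\mathfrak K_n/\mathfrak K_{n+1}$ comparing $\mathfrak i(u)^{\tilde x}$ with $\mathfrak i(u^x)$ need not be a coboundary.

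The paper closes this with condition 5.9.2: $\mathfrak i(P)$ centralizes $\mathfrak K$. This is proved from $\hat{\mathcal B}_{\hat\gamma}\cong\mathcal O_*(Q\rtimes\hat E_J(P_{\tilde\gamma})^\circ)$ together with $P=Q\mathcal S_P$. With it, $\mathfrak i(u)^{\tilde x}=\mathfrak i(u^x)$ holds exactly at every level, and the abelian-layer lifting lemma then applies; that lemma is essentially your $H^2$/$H^1$ argument with stability supplied. Your fallbacks (splitting of $O^p$-parts, or the complement $\hat F$) provide neither 5.9.1 nor 5.9.2.

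A minor point: the layers $J^n/J^{n+1}$ are $k$-vector spaces, hence infinite, not finite. This is harmless, since all you use is that they are uniquely $p'$-divisible.
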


Before proving this lemma, we first prove the following result.

\begin{lem}\label{L-M} Let $L$ be a finite group, $M$
a group, $Z$ a normal subgroup of $M$~and
$\bar\sigma\,\colon L\to \bar M = M/Z$ a group homomorphism. Let $\{Z_n\}_{n\in \Bbb N\cup \{0\}}$ be a sequence of subgroups of $Z$, which are normal in $M$.
Assume that $Z_0=Z$, that $Z_n/Z_{n+1}$ is a $p'$-divisible abelian group for any $n\in \Bbb N\cup \{0\}$, and that $\lim\limits_{\leftarrow} Z/Z_n\cong Z$. Assume
that, for a Sylow $p$-subgroup $P$ of $L\,,$ there exists a
group homomorphism $\varsigma\,\colon P\to M$ lifting the restriction of
$\bar\sigma$ to $P$ and fulfilling the following condition:

\noindent {\bf 5.9.1}\hypertarget{cond:4.13.1}{} For any subgroup $R$ of $P$ and any $x\in L$ such that $R^x\subset P\,,$ then there is $y_x\in M$ such that $\bar\sigma (x) = \bar y_x$, where $\bar y_x$ denotes the image of $y_x$ in $\bar M$, and such that $\varsigma (u^x) = \varsigma (u)^{y_x}$ for any $u\in R$.

\vspace{0.5\baselineskip}
\noindent {\bf 5.9.2}\hypertarget{cond:4.13.2}{} $\varsigma(P)$ acts trivially on $Z$ by the conjugation.

\noindent Then there is a group homomorphism $\sigma\,\colon L\to
M$ lifting $\bar\sigma$ and extending~$\varsigma$. Moreover, if
$\sigma'\,\colon L\to M$ is another group homomorphism lifting
$\bar\sigma$ and extending $\varsigma$, there is $z\in Z$ such that
$\sigma' (x) = \sigma (x)^z$ for any $x\in L$.
\end{lem}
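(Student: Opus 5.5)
The plan is the standard cohomological lifting argument, as in Puig's lifting lemma, adapted to a filtered kernel. First I treat the case where $Z$ is a $p'$-divisible abelian group, and also the finite-level case $Z/Z_n$. Then I pass to the inverse limit using $\lim\limits_{\leftarrow} Z/Z_n\cong Z$.

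The first step is to reduce from $L$ to the Sylow subgroup $P$. Form the pull-back $\hat L=\{(x,m)\in L\times M : \bar\sigma(x)=\bar m\}$. This is an extension of $L$ by $Z$, and a lifting $\sigma$ of $\bar\sigma$ is the same thing as a splitting of $\hat L\to L$. Over $P$ the extension splits by $u\mapsto (u,\varsigma(u))$.

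In the abelian case, condition \hyperlink{cond:4.13.2}{5.9.2} gives that $\varsigma(P)$ centralizes $Z$. Hence the $L$-module structure on $Z$ restricted to $P$ is trivial, and the extension class $c\in H^2(L,Z)$ restricts to $0$ in $H^2(P,Z)$. Note that $Z$ is abelian and $L$ acts on it through $\bar\sigma$, since $Z$ acts trivially on itself.

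I then need a transfer argument. Because $Z$ is $p'$-divisible, multiplication by $[L:P]$ is surjective on $Z$. Its kernel is a $p'$-torsion subgroup, and we can control it. This lets me conclude that $H^2(L,Z)\to H^2(P,Z)$ is injective on the relevant classes. Concretely, $\mathrm{cor}\circ\mathrm{res}=[L:P]$, and a class killed by $[L:P]$ can be divided away by $p'$-divisibility.

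So $c=0$, and there is some splitting $\sigma_0$. Its restriction to $P$ differs from $\varsigma$ by a 1-cocycle $P\to Z$. I correct this using $H^1$ together with condition \hyperlink{cond:4.13.1}{5.9.1}: the stability condition says the cocycle $\varsigma$ is $L$-stable in the sense of Cartan–Eilenberg. By the stable-element theorem, stable classes in $H^1(P,Z)$ lift to $H^1(L,Z)$ after dividing by $[L:P]$, which again uses $p'$-divisibility. This lets me modify $\sigma_0$ so that it extends $\varsigma$.

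Uniqueness up to $Z$-conjugacy follows the same pattern. Two such liftings differ by a cocycle in $Z^1(L,Z)$ whose restriction to $P$ vanishes. By $\mathrm{cor}\circ\mathrm{res}$ and divisibility, this cocycle is a coboundary.

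For the general filtered case, I induct along $n$. Apply the abelian step to $M/Z_{n+1}\to M/Z_n$, whose kernel $Z_n/Z_{n+1}$ is $p'$-divisible abelian. The image of $\varsigma(P)$ still acts trivially on it, and condition 5.9.1 descends to this quotient. This produces a compatible sequence of liftings $\sigma_n:L\to M/Z_n$ extending the images of $\varsigma$. At each stage I choose $\sigma_{n+1}$ lifting $\sigma_n$, not merely $\bar\sigma$; this works because the abelian step applied over $M/Z_n$ lifts any given homomorphism.

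Passing to the limit gives $\sigma:L\to\lim\limits_{\leftarrow} M/Z_n$. I then need to land in $M$. Since $Z\cong\lim\limits_{\leftarrow}Z/Z_n$, the fibre of $M\to M/Z$ over $\bar\sigma(x)$ identifies with the limit of the fibres. Hence $\sigma(x)\in M$.

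For uniqueness, I build the conjugating elements $z_n\in Z/Z_n$ compatibly, using the uniqueness at each abelian stage. Their limit is the required $z$.

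The main obstacle is the abelian step. I need to verify carefully that 5.9.1 indeed gives stability of the restricted cocycle. The elements $y_x$ are only determined modulo $C_M(\varsigma(R))\cap Z$-type ambiguity, so this needs care. I also need divisibility rather than uniquely-divisibility to suffice, since the kernel of multiplication by $[L:P]$ may be nonzero. I would handle the latter by working with explicit cochains and dividing the averaged cochain $\frac1{[L:P]}\sum_{x\in L/P}$, choosing roots via surjectivity, as in Puig's original lemma.
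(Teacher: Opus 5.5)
Your architecture matches the paper's. You build $\sigma_n\colon L\to M/Z_n$ inductively, with $\sigma_{n+1}$ lifting $\sigma_n$ and extending the image of $\varsigma$. You pass to the limit via $M\cong\varprojlim M/Z_n$, which is the content of \cite[Lemma 45.4]{T} and which your fibre argument reproves. You get uniqueness from compatible $z_n\in Z/Z_n$.

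The genuine difference is the abelian layer. The paper simply cites \cite[Lemma 3.6]{PZ2}. You reprove it using restriction--corestriction on $H^2$ to split the pull-back, Cartan--Eilenberg stable elements on $H^1$ to correct the splitting on $P$, and injectivity of restriction on $H^1$ for uniqueness. That is a legitimate, self-contained route, and it makes visible exactly where each hypothesis enters.

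The stability check you flag as the main worry is immediate. Write $y_x=w\,\sigma_0(x)$ with $w\in Z$. Then 5.9.2 gives $w^{-1}\varsigma(u)w=\varsigma(u)$, hence $\varsigma(u^x)=\varsigma(u)^{\sigma_0(x)}$, and so $f(u^x)=f(u)^{\sigma_0(x)}$ exactly.

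Two corrections are needed.

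\emph{Divisibility.} Your cohomological step requires multiplication by $[L:P]$ to be \emph{bijective} on $Z$; it is then an automorphism of the $L$-module $Z$, hence of each $H^n(L,Z)$. Your plan to make mere surjectivity suffice by ``choosing roots'' cannot succeed, because the lemma is false in that reading. Take $p\nmid m>1$, $L=C_m\times C_m$ (so $P=1$), and $Z=\mathbb{Q}/\mathbb{Z}$ with $Z_n=1$ for $n\geq 1$. Let $M$ be a non-split central extension of $L$ by $Z$, which exists since $H^2(L,\mathbb{Q}/\mathbb{Z})\cong C_m$, and let $\bar\sigma$ be the identity. Every hypothesis holds, yet no lift exists. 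Uniqueness already fails for $L=C_m$ and $M=L\times Z$, since $Z$ is central and ${\rm Hom}(C_m,\mathbb{Q}/\mathbb{Z})\neq 0$. So ``$p'$-divisible'' must be read as uniquely $p'$-divisible. That is what the application supplies, since there $Z_n/Z_{n+1}\cong J^n/J^{n+1}$ is a $k$-vector space.

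\emph{Descent of 5.9.1.} Your phrase ``condition 5.9.1 descends to this quotient'' skips the one thing the paper's proof actually verifies. At stage $n$ you apply the abelian step to $\sigma_n$, so you need elements of $M/Z_{n+1}$ lifting $\sigma_n(x)$, not merely $\bar\sigma(x)$. You must replace $y_x$ by $y_xw$ with $w\in Z/Z_{n+1}$. This $w$ need not lie in the kernel $Z_n/Z_{n+1}$, and $Z/Z_{n+1}$ need not be abelian. It is 5.9.2 on all of $Z$, not only on $Z_n/Z_{n+1}$, that gives $\varsigma(u)^{y_xw}=\varsigma(u^x)$. The same use of 5.9.2 is needed in your uniqueness induction, to see that $\sigma_{n+1}$ conjugated by a lift of $z_n$ still extends $\varsigma_{n+1}$.
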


\begin{proof} For any $n\in \Bbb N\cup \{0\}$, we set $M_n=M/Z_n$; we denote by $\varsigma_n$ the group homomorphism $P\rightarrow M_n$ induced by $\varsigma$, and by $y_{x, n}$ the image in $M_n$ of $y_x$ in condition \hyperlink{cond:5.9.1}{5.9.1}; there is a canonical group isomorphism $M_n\cong M_{n+1}/(Z_n/Z_{n+1});$ we identify $M_n$ with $M_{n+1}/(Z_n/Z_{n+1})$ through this isomorphism. Clearly $M_0=\bar M$ and $y_{x,\, 0}=\bar y_x$.

Set $\sigma_0=\bar \sigma$. Clearly $\sigma_0$ extends $\varsigma_0$. Assume that there is a sequence of group homomorphisms $\{\sigma_l: L\rightarrow M_l\}_{0\leq l\leq k}$ such that for any integer $l$ such that $0\leq l\leq k-1$, we have $\sigma_{l+1}$ lifts $\sigma_{l}$, and such that for any integer $l$ such that $0\leq l\leq k$, $\sigma_l$ extends $\varsigma_l$. Let $R$ be a subgroup of $P$ and take
$x\in L$ such that $R^x\subset P$. Since $\sigma_k$ lifts $\sigma_0$, $y_{x, \,k+1}$ and any representative $z_{x, \, k+1}$ of $\sigma_k(x)$ in $M_{k+1}$ differentiate by some element of $Z/Z_{k+1}$. Moreover since $\varsigma(P)$ acts trivially on $Z$, we have $\varsigma_{k+1}(u^x)=\varsigma_{k+1}(u)^{y_{x, \, k+1}}=\varsigma_{k+1}(u)^{z_{x, \, k+1}}$ for any $u\in P$. Thus $\sigma_k$, $\varsigma_{k+1}$ and $z_{x, \,k+1}$ satisfy condition  \hyperlink{cond:5.9.1}{5.9.1}. Since $Z_k/Z_{k+1}$ is a $p'$-divisible abelian group, by \cite[Lemma 3.6]{PZ2} there is a group homomorphism $\sigma_{k+1}\,\colon L\to M_{k+1}$ lifting $\sigma_k$ and extending~$\varsigma_{k+1}$. By induction, we get a sequence of group homomorphisms $\{\sigma_n: L\rightarrow M_n\}_{n\in \Bbb N\cup \{0\}}$ such that $\sigma_n$ extends $\varsigma_n$ for any $n\in \Bbb N\cup \{0\}$ and such that $\sigma_{n+1}$ lifts $\sigma_n$ for any $n\in \Bbb N\cup \{0\}$.

Since $\lim\limits_{\leftarrow} Z/Z_n\cong Z$, by \cite[Lemma 45.4]{T} canonical homomorphisms $M\rightarrow M_n$ induce a group isomorphism $M\cong\lim\limits_{\leftarrow} M_n $. So there is a unique group homomorphism $\sigma: L\rightarrow M$ such that $\sigma$ lifts $\sigma_n$ for any $n\in \Bbb N\cup \{0\}$. In particular $\sigma$ lifts $\bar \sigma$. Since the restriction of $\sigma$ to $P$ and the homomorphism $\varsigma$ both lift the sequence of group homomorphisms $\{\varsigma_n\}_{n\in \Bbb N\cup \{0\}}$, the restriction of $\sigma$ to $P$ has to be equal to $\varsigma$.

We assume that $\sigma'\,\colon L\to M$ is another group homomorphism lifting
$\bar\sigma$ and extending $\varsigma$. Then $\sigma'$ induces a sequence of group homomorphisms $\{\sigma'_n: L\rightarrow M_n\}_{n\in \Bbb N\cup \{0\}}$ such that $\sigma'_{n+1}$ lifts $\sigma'_n$ for any $n\in \Bbb N\cup \{0\}$ and such that $\sigma'_n$ extends $\varsigma_n$ for any $n\in \Bbb N\cup \{0\}$, where $\sigma'_0=\bar\sigma$.
By the uniqueness part of \cite[Lemma 3.6]{PZ2} we find a sequence $\{z_n\}_{n\in \Bbb N\cup \{0\}}$ such that $z_n$ belongs to $Z/Z_n$ for any $n\in \Bbb N\cup \{0\}$, such that $z_n$ is the image of $z_{n+1}$ for any $n\in \Bbb N\cup \{0\}$ and such that for any $n\in \Bbb N\cup \{0\}$, we have $\sigma'_n(x)=\sigma_n(x)^{z_n}$ for any $x\in L$. Since $\lim\limits_{\leftarrow} Z/Z_n\cong Z$, there is some $z\in Z$ lifting $z_n$ for any $n\in \Bbb N\cup \{0\}$. Clearly $\sigma'$ and $\sigma^z$ both lift $\sigma_n'$ for any $n\in \Bbb N\cup \{0\}$ and thus $\sigma'(x)=\sigma(x)^z$ for any $x\in L$.
\end{proof}

\begin{order} Proof {of Lemma \ref{Lift-Cal-L-FrM}}.

{\rm In order to prove this lemma, we apply
Lemma \ref{L-M} to the case that $L=\cal L$, $M=\frak M$, $Z=\frak K$, $\bar\sigma=\frak p$ and $\tau=\frak i$. By \cite[Charpter II, Proposition 8]{S}, there is a canonical group isomorphism $\frak K\cong 1+J(\hat{{{\cal B}}}_{\hat\delta}^Q)$. We identify $\frak K$ with $1+J(\hat{{{\cal B}}}_{\hat\delta}^Q)$. Set ${\frak K}_0=\frak K$ and ${\frak K}_n=1+J(\hat{{{\cal B}}}_{\hat\delta}^Q)^n$ for any $n\in \Bbb N$. Then for any $n\in {\Bbb N}\cup \{0\}$, ${\frak K}_n$ is normal in $\frak M$, ${\frak K}_n/{\frak K}_{n+1}$ is a $p'$-divisible abelian group and canonical group homomorphisms ${\frak K}\rightarrow {\frak K}/{\frak K}_n$ induces a group isomorphism $\lim\limits_{\leftarrow} {\frak K}/{\frak K}_n\cong {\frak K}$ (see \cite[Lemma 45.5]{T}). So the group $\frak K$ together the sequence $\{{\frak K}_n\}_{n\in {\Bbb N}\cup \{0\}}$ satisfies the assumption in Lemma \ref{L-M}.

Let $R$ be a subgroup of $P$ and let $x$ be an element of $\cal L$ such that $R^{x}\subset P$. Since $Q$ is normal in ${\cal L}$, we have $(RQ)^x\leq P$. It is easy to see that if the homomorphisms ${\frak p}$ and $\frak i$ satisfy condition  \hyperlink{cond:5.9.1}{5.9.1} for $RQ$, then the homomorphisms ${\frak p}$ and $\frak i$ satisfy condition  \hyperlink{cond:5.9.1}{5.9.1} for $R$. So we assume that $R$ contains $Q$. Since $P$ is an abelian Sylow $p$-subgroup of ${\cal L}$ (see \cite[Remark 1.9]{KP}), there is $y\in N_{\cal L}(P)$ such that ${u}^{x}={
u}^{y}$ for any $u\in R$ (see \cite[Proposition 49.6]{T}). Set $z=xy^{-1}$. Clearly $z$ belongs to $C_{\cal L}(R)$. So in order to show that the homomorphisms ${\frak p}$ and $\frak i$ satisfy condition \hyperlink{cond:5.9.1}{5.9.1}, it suffices to show that the homomorphisms ${\frak p}$ and $\frak i$ satisfy condition \hyperlink{cond:5.9.1}{5.9.1} for any subgroup $R$ of $P$ containing $Q$ such that $R^{x}=R$.
Let $x'$ be the image of $x$ in ${\cal L}/Q$ and take a representative $a$ of $\lambda_R(x')$ in ${\cal N}_{\hat{\cal A}_{\hat\varepsilon}^*}(R)$. There is a representative $g$ of $\pi(x)$ in $N_G(R_\varepsilon)$ such that $\varphi_{R,\,x^{-1}}^R=\varphi_{R,\,g^{-1}}^R$, such that $a$ belongs to ${\cal N}_{\hat{\cal A}_{\hat\varepsilon}^*}^{\dot g}(R)$ and such that $aua^{-1}=\varphi_{R,\,g^{-1}}^R(u)\hat h$ for any $u\in R$. We denote by $w_x$ the image of $a$ in $\frak M$. Then we have ${\frak i}(u^x)={\frak i}(u)^{w_x}$ for any $u\in R$. Moreover by diagram \ref{eq:dia-5.6.1}, $\frak p$ maps $x$ onto the image of $w_x$ in ${\frak M}/\frak K$. So the homomorphisms ${\frak p}$ and $\frak i$ satisfy condition  \hyperlink{cond:5.9.1}{5.9.1}.

Set $\hat{\cal C}_{\hat\gamma}=\hat i {\cal C}_\gamma \hat i$. Then $\hat {\cal C}_{\hat\gamma}$ is a $\dot J$-graded $P$-interior algebra. As in the proof of Lemma \ref{Iso-Hat-A-ga}, we prove that $\hat {\cal C}_{\hat\gamma}$ is isomorphic to ${\cal O}_*(P\rtimes \hat E_J(P_{\tilde\gamma})^\circ)$ as $\dot J$-graded $P$-interior algebras. In particular, $\hat{{{\cal B}}}_{\hat\gamma}$ is isomorphic to ${\cal O}_*(Q\rtimes \hat E_J(P_{\tilde\gamma})^\circ)$ as $Q$-interior $P$-algebras.
Since $P=Q {\cal S}_P$ (see 3.3), where ${\cal S}_P$ is the subgroup of all $E_J(P_{\tilde\gamma})$-fixed elements of $P$, the $P$-conjugation acts trivially on $({\cal O}_*(Q\rtimes \hat E_J(P_{\tilde\gamma})^\circ))^Q$. So condition  \hyperlink{cond:5.9.2}{5.9.2} is satisfied.

Finally by Lemma \ref{L-M} the homomorphism $\frak p$ can be lifted to a group homomorphism ${\cal L}\rightarrow \frak M$ extending the homomorphism $\frak i$. Let $x$ be an element in the kernel of the homomorphism ${\cal L}\rightarrow \frak M$. Then the isomorphism $\lambda_Q$ maps the image of $x$ in ${\cal L}/Q$ onto $1$. Thus $x$ has to be inside $Q$. Since the homomorphism ${\cal L}\rightarrow\frak M$ extends $\frak i$, $x$ has to be $1$. So the homomorphism ${\cal L}\rightarrow \frak M$ is injective.\qed
}
\end{order}

\begin{order}{\rm We denote by $\frak p'$ the homomorphism ${\cal L}\rightarrow \frak M$ in Lemma \ref{Lift-Cal-L-FrM} and by $\tilde{\cal L}$ the inverse image of ${\frak p'}({\cal L})$ in ${\cal N}_{\hat{\cal A}_{\hat\delta}^*}(Q)$. The group $\tilde{\cal L}$ with the inclusion map $k^*\rightarrow \tilde{\cal L}$ becomes a $k^*$-group with the $k^*$-quotient ${\cal L}$. We endow the twisted group algebra ${\cal O}_*\tilde{\cal L}$ with a $\dot G$-graded algebra structure through isomorphism \ref{eq:5.7.1}. The inclusion map $P\subset \tilde {\cal L}$ induces a $P$-interior algebra structure on ${\cal O}_*\tilde{\cal L}$.
Let ${\cal K}$ the inverse image of $N_H(Q_\delta)/C_H(Q)$  in ${\cal L}$
and let $\tilde {\cal K}$ be the inverse image of ${\cal K}$ in $\tilde {\cal L}$.
}
\end{order}

\begin{lem}\label{Gad-Cal-A-ga-p'} With the notation as above, there is a $\dot{G}$-graded $P$-interior algebra isomorphism
\begin{equation}
{\cal A}_\gamma\cong S\otimes_{\cal O} {\cal O}_*\tilde{\cal L}.
\tag*{5.12.1}
\label{eq:5.12.1}
\end{equation}
\end{lem}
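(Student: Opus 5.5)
We will first identify $\hat{\cal A}_{\hat\delta}$ with ${\cal O}_*\tilde{\cal L}$ as $\dot G$-graded $P$-interior algebras, and then transfer the result back to ${\cal A}_\gamma$ by tensoring with $S$, as in Lemma \ref{Cal-A-ga} and Theorem \ref{Gad-Cal-A-ga}.

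\textbf{Step 1: the homomorphism $\theta$.} By construction $\tilde{\cal L}$ is a subgroup of ${\cal N}_{\hat{\cal A}_{\hat\delta}^*}(Q)$ containing $k^*$ and $P\hat i$. The inclusion therefore induces an ${\cal O}$-algebra homomorphism
$$\theta:{\cal O}_*\tilde{\cal L}\rightarrow \hat{\cal A}_{\hat\delta}=\hat{\cal A}_{\hat\gamma}.$$
It is $P$-interior because $\frak p'$ extends $\frak i$. It respects the $\dot G$-grading because of how the grading on ${\cal O}_*\tilde{\cal L}$ was defined via isomorphism \ref{eq:5.7.1}: an element of $\tilde{\cal L}$ over $x\in{\cal L}$ lies in ${\cal N}^{\dot g}_{\hat{\cal A}_{\hat\delta}^*}(Q)$, where $\dot g$ is the image of $\pi(x)$ in $\dot G$. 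In particular, $\theta$ maps ${\cal O}_*\tilde{\cal K}$ into $\hat{{\cal B}}_{\hat\delta}$.

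\textbf{Step 2: $\theta$ is an isomorphism.} We compare ranks and show that $\theta$ is injective with image everything; this is the step I expect to be the main obstacle.

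Restricted to the $1$-component, $\hat{{\cal B}}_{\hat\delta}$ is a source-type algebra of the inertial block $b$ of $H$ twisted by $S^\circ$. By \cite{P11} (applied to $H$ with defect group $Q$, as in Lemma \ref{Iso-Hat-A-ga}), it is isomorphic to ${\cal O}_*(Q\rtimes\hat E_H(Q_\delta)^\circ)$. On the other hand, $\tilde{\cal K}/k^*\cong{\cal K}$, which is an extension of $Q$ by $N_H(Q_\delta)/C_H(Q)=E_H(Q_\delta)$. Hence both algebras have ${\cal O}$-rank $|Q||E_H(Q_\delta)|$.

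To compare the two, I would use the uniqueness of $p'$-splittings (\cite[Lemma 45.6]{T}) exactly as in Proposition \ref{Graded-Hat-A-ga}. A complement of $Q$ in $\tilde{\cal K}$ and the subgroup $\hat E_H(Q_\delta)^\circ$ both give sections of ${\cal N}_{\hat{{\cal B}}_{\hat\delta}^*}(Q)/k^*\rightarrow F_{\hat{{\cal B}}}(Q_{\hat\delta})$. They are therefore conjugate by an element of $1+J(\hat{{\cal B}}_{\hat\delta}^Q)$, and consequently $\theta$ restricted to ${\cal O}_*\tilde{\cal K}$ is an isomorphism onto $\hat{{\cal B}}_{\hat\delta}$.

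\textbf{Step 3: extending to all components.} For each $\dot g\in\dot G$, choose $\tilde y\in\tilde{\cal L}$ lying over a preimage in ${\cal L}$ of $\dot g$; such an element exists since $\pi$ is surjective and $G=HN_G(Q_\delta)$. Then $\theta(\tilde y)$ is an invertible element of the $\dot g$-component of $\hat{\cal A}_{\hat\delta}$. Because $\hat{\cal A}_{\hat\delta}$ is a crossed product over its $1$-component (invertible elements exist in each component), that component equals $\hat{{\cal B}}_{\hat\delta}\theta(\tilde y)$. Likewise, the $\dot g$-component of ${\cal O}_*\tilde{\cal L}$ is ${\cal O}_*\tilde{\cal K}\,\tilde y$. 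Hence $\theta$ is bijective on each component, so $\theta$ is a $\dot G$-graded $P$-interior isomorphism.

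\textbf{Step 4: returning to ${\cal A}_\gamma$.} Lemma \ref{Cal-A-ga} gives a $\dot G$-graded $P$-interior embedding ${\cal A}_\gamma\rightarrow S\otimes_{\cal O}\hat{\cal A}_{\hat\gamma}\cong S\otimes_{\cal O}{\cal O}_*\tilde{\cal L}$. To see that it is onto, I argue as in Theorem \ref{Gad-Cal-A-ga}. The point $\{1\}$ of $P$ on ${\cal O}_*\tilde{\cal K}$ is the unique local point, because $C_{\cal L}(Q)$ is a $p$-group containing $P$ (the block $b$ of $H$ is inertial, so $E_H(Q_\delta)$ is a $p'$-group acting faithfully on $Q$). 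Hence, by \cite[Theorem 5.3]{P4}, the identity of $S\otimes_{\cal O}{\cal O}_*\tilde{\cal L}$ is primitive in the relevant fixed points and lies in the unique local point. The image idempotent of the embedding lies in this same point, so the embedding is an isomorphism, which gives \ref{eq:5.12.1}.
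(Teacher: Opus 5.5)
The architecture of your proof is the paper's: grade $\theta$, get an isomorphism on the $1$-component by conjugacy of $p'$-complements \cite[Lemma 45.6]{T} plus a rank count, extend to all components by the crossed-product structure, then force the embedding of Lemma~\ref{Cal-A-ga} to be onto. However, the key input of Step~2 is not justified by what you cite.

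Applying \cite{P11} to $H$ only gives ${\cal B}_\delta\cong S_H\otimes_{\cal O}{\cal O}_*(Q\rtimes\hat E_H(Q_\delta)^\circ)$ for \emph{some} Dade $Q$-algebra $S_H$. But $\hat{\cal B}=S^\circ\otimes_{\cal O}{\rm Res}^G_P({\cal B})$ is built from the Dade $P$-algebra $S$ attached to $J=PH$ in paragraph~\ref{NotationF5}. The argument of Lemma~\ref{Iso-Hat-A-ga} yields $\hat{\cal B}_{\hat\delta}\cong{\cal O}_*(Q\rtimes\hat E_H(Q_\delta)^\circ)$ only if $(\jmath S\jmath)^\circ\otimes_{\cal O}S_H$ admits a $Q$-interior embedding of ${\cal O}$, i.e.\ only if $S_H$ and $\jmath S\jmath$ are equivalent Dade $Q$-algebras. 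Otherwise $\hat{\cal B}_{\hat\delta}$ carries a nontrivial Dade factor, its ${\cal O}$-rank exceeds $|Q||E_H(Q_\delta)|$, and your surjectivity-plus-rank argument collapses. Nothing in your proposal relates $S_H$ to $S$.

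The paper gets this compatibility from Proposition~\ref{Graded-Hat-A-ga} applied to the $p$-extension $J$. The isomorphism $\hat{\cal C}_{\hat\gamma}\cong{\cal O}_*(P\rtimes\hat E_J(P_{\tilde\gamma})^\circ)$ is $\dot J$-graded, which is what makes \ref{eq:5.1.1} graded. Passing to $1$-components therefore gives $\hat{\cal B}_{\hat\gamma}\cong{\cal O}_*(Q\rtimes\hat E_H(Q_\delta)^\circ)$ for the same $S$. With that substitution, the remainder of your Steps~2 and~3 goes through as written.

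In Step~4 the stated reason is false: $C_{\cal L}(Q)$ need not be a $p$-group. Elements of ${\cal L}$ lying over $C_G(Q_\delta)/C_H(Q)\le{\cal E}$ act trivially on $Q$; for instance, take $G=H\times X$ with $X$ a nontrivial $p'$-group. What you actually need concerns ${\cal K}$, and it follows at once from the corrected Step~2. There $\theta$ restricts to an isomorphism of $Q$-interior $P$-algebras ${\cal O}_*\tilde{\cal K}\cong\hat{\cal B}_{\hat\gamma}=\hat i\hat{\cal B}\hat i$. Since $\hat i$ is primitive in $\hat{\cal B}^P$ and $P_{\hat\gamma}$ is local, $1$ is primitive in $({\cal O}_*\tilde{\cal K})^P$, and $\{1\}$ is the unique point of $P$ there, which is local. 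With these two repairs your argument coincides with the paper's proof.
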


\begin{proof}
The inclusion $\tilde{\cal L}\subset \hat{\cal A}_{\hat\gamma}^*$
induces a $P$-interior algebra homomorphism
${\cal O}_*\tilde{\cal L}\rightarrow \hat{\cal A}_{\hat\gamma}$.
Let $x$ be an element of ${\cal L}$ and let $x'$ be the image of $x$ in ${\cal L}/Q$. We take a representative $a$ of $\lambda_Q(x')$ in ${\cal N}_{\hat{\cal A}_{\hat\varepsilon}^*}(Q)$ and denote by $w_x$ the image of $a$ in $\frak M$. Then $\pi(x)$ has a representative $g$ in $N_G(Q_\delta)$ such that $\varphi_{R,\,x^{-1}}^R=\varphi_{R,\,g^{-1}}^R$, such that $a$ belongs ${\cal N}_{\hat{\cal A}_{\hat\varepsilon}^*}^{\dot g}(R)$ and such that $aua^{-1}=\varphi_{R,\,g^{-1}}^R(u)\hat h$ for any $u\in R$. Clearly the homomorphism $\frak p$ maps $x$ onto the image of $w_x$ in ${\frak M}/\frak K$ and since $a$ belongs to ${\cal N}_{\hat{\cal A}_{\hat\varepsilon}^*}^{\dot g}(R)$, any inverse image of $x$ in $\tilde{\cal L}$ lies in the $\dot g$-component of $\hat{\cal A}_{\hat\gamma}$. Thus the homomorphism ${\cal O}_*\tilde{\cal L}\rightarrow \hat{\cal A}_{\hat\gamma}$ is a $\dot G$-graded algebra homomorphism.

The homomorphism ${\cal O}_*\tilde{\cal L}\rightarrow \hat{\cal A}_{\hat\gamma}$ induces an ${\cal O}$-algebra homomorphism ${\cal O}_*\tilde{\cal K}\rightarrow \hat{{{\cal B}}}_{\hat\gamma}.$
Since $E_H(Q_\delta)$ is a $p'$-group, ${\cal K}$ has a subgroup $E$ such that ${\cal K}=Q\rtimes E$. Thus we have $\tilde{\cal K}=Q\rtimes \tilde E$, where $\tilde E$ is the inverse image of $E$ in $\tilde {\cal K}$.
On the other hand, by Proposition \ref{Graded-Hat-A-ga}, there is a $Q$-interior algebra isomorphism $\hat{{{\cal B}}}_{\hat\gamma}\cong {\cal O}_*(Q\rtimes \hat E_H(Q_\delta)^\circ)$. As in the third paragraph of the proof of Proposition \ref{Graded-Hat-A-ga}, it is easily checked that $\tilde E$ and the inverse image of $\hat E_H(Q_\delta)^\circ$ through this $Q$-interior algebra isomorphism are conjugate in $(\hat{{{\cal B}}}_{\hat\gamma}^Q)^*$.
In particular, this implies that $\hat{{{\cal B}}}_{\hat\gamma}$ is generated by $\tilde E$ and $Q\hat i$. Thus the homomorphism ${\cal O}_*\tilde{\cal K}\rightarrow \hat{{{\cal B}}}_{\hat\gamma}$ is surjective and then is an ${\cal O}$-algebra isomorphism since $\hat {{{\cal B}}}_{\hat\gamma}$ and ${\cal O}_*\tilde K$ have the same ${\cal O}$-rank $|Q||E_H(Q_\delta)|$. Since $\dot{G}$-graded algebras ${\cal O}_*\tilde{\cal L}$ and $\hat{\cal A}_{\hat\gamma}$ are crossed products of $\dot{G}$, the homomorphism ${\cal O}_*\tilde{\cal L}\rightarrow \hat{\cal A}_{\hat\gamma}$ must be an isomorphism.

As in Lemma \ref{Cal-A-ga}, we prove that there is a $\dot G$-graded $P$-interior algebra embedding~${\cal A}_\gamma\rightarrow S\otimes_{\cal O} \hat{\cal A}_{\hat\gamma}$. Then we get a $\dot{G}$-graded $P$-interior algebra embedding
${\cal A}_\gamma\rightarrow S\otimes_{\cal O} {\cal O}_*\tilde{\cal L}. $ Since the identity element of $S\otimes_{\cal O} {\cal O}_*\tilde{\cal L}$ is contained in the unique local point of $P$ on $S\otimes_{\cal O} {\cal O}_*\tilde{\cal K}$ (see Lemma 4.2 above), this embedding must be an isomorphism.
\end{proof}

\begin{order} {\rm Let $\jmath$ be an element of the local point of $Q$ on $S$. By Lemma 4.2, it is easy to check that $\jmath\otimes 1$ is contained in the unique local point of $Q$ on $S\otimes_{\cal O} {\cal O}_*\tilde{\cal K}$ and that the inverse image $j'$ of $\jmath\otimes 1$ in ${\cal A}_\gamma$ through isomorphism \ref{eq:5.12.1} is contained in $\delta$. We adjust $j$ so that $j'=j$. Set ${\cal A}_\delta=j{\cal A} j$ and $V=\jmath S \jmath$. Then isomorphism \ref{eq:5.12.1} induces a $\dot G$-graded $Q$-interior algebra isomorphism ${\cal A}_\delta\cong V\otimes_{\cal O} {\cal O}_*\tilde{\cal L}$, through which, we identify ${\cal A}_\delta$ with $V\otimes_{\cal O} {\cal O}_*\tilde{\cal L}$. On the other hand, the $Q$-interior algebra $V$ is $N_G(Q_\delta)$-stable (see the second paragraph of the proof of Lemma \ref{Inj-map}). Thus for any $x\in N_G(Q_\delta)$ there is an invertible element $s_x$ in $V$ such that $s_x u s_x^{-1}=\varphi_{Q,\,x}^Q(u)1_V$ for any $u\in Q$. Then $\varphi_{Q,\,x}^Q$ belongs to $F_V(Q_{\{1_V\}})$ and the map ${E}_{G, \, \dot G}(Q_\delta)\rightarrow F_V(Q_{\{1_V\}})$ sending $(\varphi_{Q,\,x}^Q,\,\dot{x})$ onto $\varphi_{Q,\,x}^Q$ for any $x\in N_G(Q_\delta)$ is a group homomorphism. Moreover by \cite[2.12.4]{KP}, this group homomorphism can be lifted to a group homomorphism $\theta:{E}_{G, \, \dot G}(Q_\delta)\rightarrow \hat F_V(Q_{\{1_V\}})$. We set $\theta(\varphi_{Q,\,x}^Q,\,\dot{x})=(\varphi_{Q,\,x}^Q,\, \bar u_x)$. For any $\tilde y\in \tilde {\cal L}$, we take a representative $z$ of $\pi(y)$ in $N_G(Q_\delta)$. Then it is easy to check that the correspondence
\begin{equation}
\tilde{\cal L}\rightarrow \hat F_{{\cal A},\,\dot G}(Q_\delta),\, \tilde y\mapsto (\varphi_{Q,\,z}^Q,\,\dot{z},\, \overline{u_z\otimes \tilde y})
\tag*{5.13.1}
\label{eq:5.13.1}
\end{equation}
is a $k^*$-group homomorphism, where $\overline{u_z\otimes \tilde y}$ is the image of $u_z\otimes \tilde y$ in $\hat {\cal F}_{\cal A}(Q_\delta)$.}
\end{order}

\begin{lem}\label{k*-Hat-Hom} With the above notation, the inclusion ${E}_{G, \, \dot G}(Q_\delta)\subset F_{{\cal A},\,\dot G}(Q_\delta)$ can be lifted to an injective $k^*$-group homomorphism $\hat{E}_{G, \, \dot G}(Q_\delta)^\circ\rightarrow \hat F_{{\cal A},\,\dot G}(Q_\delta)$.
\end{lem}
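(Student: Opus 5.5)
We use the $k^*$-group homomorphism \ref{eq:5.13.1}, $\tilde{\cal L}\rightarrow \hat F_{{\cal A},\,\dot G}(Q_\delta)$, and show that it factors through $\hat{\cal L}^\circ$ and then through $\hat E_{G,\,\dot G}(Q_\delta)^\circ$.

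\emph{Step 1: the kernel.} The map \ref{eq:5.13.1} covers $\tilde{\cal L}\rightarrow{\cal L}\rightarrow{\cal E}=E_{G,\,\dot G}(Q_\delta)\subset F_{{\cal A},\,\dot G}(Q_\delta)$. For $u\in Q\subset\tilde{\cal L}$ we may take $z=u$. Then $u_u\otimes u$ is the image of $u$ in ${\cal A}_\delta=V\otimes_{\cal O}{\cal O}_*\tilde{\cal L}$, up to a scalar coming from the choice of $\theta$. It therefore lies in $({\cal B}_\delta^Q)^*$, and its class in $\hat{\cal F}_{\cal A}(Q_\delta)$ is a scalar. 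Since $Q$ is a $p$-group and $\theta$ is a group homomorphism, this scalar is trivial on $Q$. Hence \ref{eq:5.13.1} kills $Q$ and induces a $k^*$-group homomorphism $\tilde{\cal L}/Q\rightarrow\hat F_{{\cal A},\,\dot G}(Q_\delta)$ lifting the inclusion $E_{G,\,\dot G}(Q_\delta)\subset F_{{\cal A},\,\dot G}(Q_\delta)$. This map is injective, because any lift of an injective map between $k^*$-quotients that is the identity on $k^*$ is injective.

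\emph{Step 2: identify the $k^*$-group.} It remains to show that $\tilde{\cal L}/Q$ is isomorphic to $\hat E_{G,\,\dot G}(Q_\delta)^\circ$ as $k^*$-groups over ${\cal E}$. Both have $k^*$-quotient ${\cal E}$. Any two such lifts differ by a class in $H^2({\cal E},k^*)$, so we need an explicit comparison. I would obtain it by passing to the Brauer quotient at $Q$:
\begin{itemize}
\item By \cite[Proposition 6.12]{P6}, \cite[2.12.4]{KP} and \cite[Proposition 5.11]{P4} (as in the proof of Proposition \ref{Graded-Hat-A-ga}), $\hat F_{{\cal A},\,\dot G}(Q_\delta)$ is identified with the $k^*$-group obtained from the action of $N_G(Q_\delta)$ on the simple algebra $U$. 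That $k^*$-group is $\hat E_{G,\,\dot G}(Q_\delta)^\circ$; the opposite appears because $U$ acts by multiplication on the source side.
\item Concretely, an element $(x,s_\delta(a))\in\hat N_G(Q_\delta)$ gives the element $a^{-1}x$, multiplied by $j$, in ${\cal N}_{{\cal A}_\delta^*}(Q)$. This is the same construction as $d_y$ in the proof of Lemma \ref{Inj-map}, and its class in $\hat{\cal F}_{\cal A}(Q_\delta)$ is well defined.
\item Killing $C_H(Q)$ then yields an injective $k^*$-group homomorphism $\hat E_{G,\,\dot G}(Q_\delta)^\circ\rightarrow\hat F_{{\cal A},\,\dot G}(Q_\delta)$ lifting the inclusion.
\end{itemize}

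\emph{Step 3: check the construction.} We must verify three things for the assignment in Step 2:
\begin{itemize}
\item It is a homomorphism. The order reversal is where $\hat E^\circ$ enters.
\item It is independent of the choice of $a$. Two choices differ by elements mapping to $1+J$, and $1+J({\cal B}_\delta^Q)$ is quotiented out.
\item Its kernel is exactly $C_H(Q)$. An element $x\in C_H(Q)$ satisfies $a=x$ up to $1+J$, so $C_H(Q)$ is killed. Conversely, Lemma \ref{Inj-map} shows the underlying map on $E_{G,\,\dot G}(Q_\delta)$ is injective, so nothing else is killed.
\end{itemize}

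I expect the main obstacle to be proving well-definedness modulo $1+J({\cal B}_\delta^Q)$ and tracking the opposite $k^*$-structure. For this I would use that $j({\cal O}H)^Qj$ modulo its radical is $k$, and that $s_\delta$ restricted to $j({\cal O}H)^Qj$ maps onto $s_\delta(j)Us_\delta(j)\cong k$.
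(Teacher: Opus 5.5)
Your Steps 2 and 3 are essentially the paper's proof. The paper sends $\overline{(x,\,s_\delta(a))}$ to $(\varphi^Q_{Q,\,x},\,\dot x,\,\overline{j(xa^{-1})j})$; your $a^{-1}x$ would do as well, since both $xa^{-1}$ and $a^{-1}x$ act trivially on $U$. It then proves independence of $a$ the way you suggest: $j({\cal O}H)^Qj$ is local, which forces $j\,{\rm Ker}(s_\delta)\,j\subset J({\cal B}_\delta^Q)$. Finally it checks multiplicativity and gets injectivity because a $k^*$-lift of an injective map is injective. Step 1 can be dropped, together with the goal $\tilde{\cal L}/Q\cong\hat E_{G,\,\dot G}(Q_\delta)^\circ$ it announces. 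That identification is Lemma \ref{k*-Iso-HatL-TildeL}, which the paper deduces \emph{from} the present lemma by comparing images, and your explicit map already proves the statement. Two steps of your plan, however, do not work as written.

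\textbf{The element is not yet in ${\cal A}_\delta$.} Neither $a^{-1}x$ nor $xa^{-1}$ need commute with $j$, so ``$a^{-1}x$ multiplied by $j$'' is not yet an element of ${\cal A}_\delta$. This is not the situation of $d_y$ in Lemma \ref{Inj-map}: there $a_y$ is chosen freely to conjugate $h$, whereas here $a$ is pinned down modulo ${\rm Ker}(s_\delta)$ by the action on $U$. The paper supplies the missing step via \cite[Lemma 6.3]{P6}. The idempotents $j$ and $j^{xa^{-1}}$ have the same image in every simple quotient of $({\cal O}H)^Q$: equal in $U$ because $xa^{-1}$ acts trivially on $U$, and zero in the others because both lie in $\delta$. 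Hence they are conjugate by some $c\in 1+J(({\cal O}H)^Q)$. Replacing $a$ by $ca$ makes $xa^{-1}$ commute with $j$, and it does not change $s_\delta(a)$.

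\textbf{The $\circ$ is not an order reversal.} In paragraph \ref{Notation6}, $\hat X^\circ$ is the same group with $k^*$ embedded through $\lambda\mapsto\lambda^{-1}$. The assignment is multiplicative in the \emph{same} order; the paper's computation uses $s_\delta(a^{yd^{-1}})=s_\delta(a)$. The $\circ$ enters only through $\overline{(1,\,\lambda)}\mapsto\overline{j\lambda^{-1}j}=\lambda^{-1}$, because $x(\lambda a)^{-1}=\lambda^{-1}xa^{-1}$. That is, it appears exactly in the check that the map respects the $k^*$-structures, which your Step 3 omits. Trying instead to verify an anti-homomorphism identity would fail whenever the image is non-abelian.
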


 \begin{proof}For an element $\overline{(x, \, s_\delta(a))}$ in $\hat{E}_{G, \, \dot G}(Q_\delta)^\circ$, we have $s_\delta(j^{xa^{-1}})=s_\delta(j)$ and $s_\epsilon(j^{xa^{-1}})=s_\epsilon(j)$ for any other point $\epsilon$ of $Q$ on $B$. Then by \cite[Lemma 6.3]{P6} there is a suitable element $c$ of $1+J({{{\cal B}}}^Q)$ such that
$xa^{-1}c^{-1}$ and $j$ commute each other and such that $j(xa^{-1})j=(xa^{-1}c^{-1})j$. In particular, $j(xa^{-1})j$ belongs to ${\cal N}_{{\cal A}_\delta^*}(Q)$. For any $a\in {\cal N}_{{\cal A}_\delta^*}(Q)$, we denote by $\bar a$ the image of $a$ in $\hat {\cal F}_{\cal A}(Q_\delta)$. We define a correspondence $\hat\theta_\delta: \hat{E}_{G, \, \dot G}(Q_\delta)^\circ\rightarrow \hat F_{{\cal A},\,\dot G}(Q_\delta)$ sending $\overline{(x, \, s_\delta(a))}$ to $(\varphi_{Q,\,x}^Q,\,\dot x,\,\overline{j(xa^{-1})j})$.

Let $a'$ be another invertible element of ${{{\cal B}}}^Q$ such that $s_\delta(a')=s_\delta(a)$. By \cite[Lemma 6.3]{P6} again, there is a suitable element $c'$ of $1+J({{{\cal B}}}^Q)$ such that $xa'^{-1}c'^{-1}$ and $j$ commute and such that $j(xa'^{-1})j=(xa'^{-1}c'^{-1})j$. Clearly we have $((xa'^{-1}c'^{-1})j)^{-1}(xa^{-1}c^{-1})j=c'a'a^{-1}c^{-1}j$ and $c'a'a^{-1}c^{-1}j$ belongs to $1+J({{{\cal B}}}_\delta^Q)$. Thus we have  $\overline{j(xa^{-1})j}=\overline{xa^{-1}c^{-1}j}=\overline{xa'^{-1}c'^{-1}j}
=\overline{j(xa'^{-1})j}$ and $(\varphi_{Q,\,x}^Q,\,\dot x,\,\overline{j(xa^{-1})j})=(\varphi_{Q,\,x}^Q,\,\dot x,\,\overline{j(xa'^{-1})j})$. If $(y, \, s_\delta(d))\in \hat N_G(Q_\delta)$ has the same image as $(x, \, s_\delta(a))$ in $\hat{E}_{G, \, \dot G}(Q_\delta)^\circ$, there is some $z\in C_H(Q)$ such that $s_\delta(d)=s_\delta(az)$ and $y=xz$. As above we choose $d$ to be $az$. Then we have $(\varphi_{Q,\,x}^Q,\,\dot x,\,\overline{j(xa^{-1})j})=(\varphi_{Q,\,y}^Q,\,\dot y,\, \overline{j(yd^{-1})j})$ and so $\hat\theta_\delta$ is a well defined map.

We take two elements $(x, \, s_\delta(a))$ and $(y, \, s_\delta(d))$ in $\hat N_G(Q_\delta)$ such that $xa^{-1}$ and $yd^{-1}$ commute with $j$.
We take an element $c''$ of $1+J({{{\cal B}}}^Q)$ such that $xy (ad)^{-1}c''^{-1}$ and $j$ commute and such that $jxy (ad)^{-1}j=hxy (ad)^{-1}c''^{-1}$. Since $s_\delta(a^{yd^{-1}})=s_\delta(c''a)$,
we have
\begin{eqnarray*}\hat\theta_\delta(\overline{(x, \, s_\delta(a))}\,\overline{(y, \, s_\delta(d))})&=&\hat\theta_\delta(\overline{(xy, \, s_\delta(ad))})=(\varphi_{Q,\,xy}^Q,\,\dot{x}\dot{y},\, \overline{xy(ad)^{-1}c''^{-1}j})\cr
&=&(\varphi_{Q,\,xy}^Q,\,\dot{x}\dot{y},\, \overline{xa^{-1}j}\,\,\overline {yd^{-1}a^{yd^{-1}}a^{-1}c''^{-1}j})\cr
&=&(\varphi_{Q,\,x}^Q,\,\dot x,\,\overline{xa^{-1}j})(\varphi_{Q,\,y}^Q,\,\dot{y},\,\overline {yd^{-1}j})=\hat\theta_\delta(\overline{(x, \, s_\delta(a))})\hat\theta_\delta(\overline{(y, \, s_\delta(d))})
\end{eqnarray*}
and thus $\hat\theta_\delta$ is a group homomorphism. Clearly $\hat\theta_\delta$ maps $\overline{(x,\, s_\delta(\lambda a))}$ onto $(\varphi_{Q,\,x}^Q,\,\dot x,\,\overline{j(x\lambda a^{-1})j})$ for any $\lambda\in {\cal O}^*$ and lifts the inclusion ${E}_{G, \, \dot G}(Q_\delta)\subset F_{{\cal A},\,\dot G}(Q_\delta)$. So $\hat\theta_\delta$ is an injective $k^*$-group homomorphism.
\end{proof}

\begin{lem}\label{k*-Iso-HatL-TildeL} With the notation as above, there is a $k^*$-group isomorphism $\hat{\cal L}^\circ\cong \tilde{\cal L}$, which lifts the identity map on ${\cal L}$.
\end{lem}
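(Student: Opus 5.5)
The idea is to realize both $\hat{\cal L}^\circ$ and $\tilde{\cal L}$ inside the single $k^*$-group $\hat F_{{\cal A},\,\dot G}(Q_\delta)$, and compare them there. Both are central extensions of ${\cal L}$ by $k^*$, and both embed into $\hat F_{{\cal A},\,\dot G}(Q_\delta)$ through maps whose $k^*$-quotients factor through the same map to $F_{{\cal A},\,\dot G}(Q_\delta)$. Comparing the two embeddings should produce the required isomorphism.

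\emph{Step 1: the image of $\tilde{\cal L}$.} Let $\mu$ denote the $k^*$-group homomorphism $\tilde{\cal L}\rightarrow \hat F_{{\cal A},\,\dot G}(Q_\delta)$ of \ref{eq:5.13.1}. On $k^*$-quotients, $\mu$ is the composite of $\pi:{\cal L}\rightarrow{\cal E}=E_{G,\,\dot G}(Q_\delta)$ and the inclusion $E_{G,\,\dot G}(Q_\delta)\subset F_{{\cal A},\,\dot G}(Q_\delta)$ of Lemma \ref{Inj-map}. Its kernel is exactly $Q$, since $u\in Q$ maps to an element with trivial $\varphi$-part. Thus $\mu$ induces an injective $k^*$-group homomorphism $\tilde{\cal L}/Q\rightarrow \hat F_{{\cal A},\,\dot G}(Q_\delta)$, and $\tilde{\cal L}/Q$ is a $k^*$-group with $k^*$-quotient ${\cal E}$.

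\emph{Step 2: the image of $\hat{\cal L}^\circ$.} Lemma \ref{k*-Hat-Hom} gives an injective $k^*$-group homomorphism $\hat\theta_\delta:\hat E_{G,\,\dot G}(Q_\delta)^\circ\rightarrow \hat F_{{\cal A},\,\dot G}(Q_\delta)$ lifting the same inclusion $E_{G,\,\dot G}(Q_\delta)\subset F_{{\cal A},\,\dot G}(Q_\delta)$. So the images of $\tilde{\cal L}/Q$ and $\hat E_{G,\,\dot G}(Q_\delta)^\circ$ are two $k^*$-subgroups of $\hat F_{{\cal A},\,\dot G}(Q_\delta)$ lying over the same subgroup ${\cal E}$ of $F_{{\cal A},\,\dot G}(Q_\delta)$.

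Both subgroups contain $k^*$ and surject onto the same image with kernel $k^*$, so they coincide. This yields a $k^*$-group isomorphism $\tilde{\cal L}/Q\cong \hat E_{G,\,\dot G}(Q_\delta)^\circ$ lifting the identity of ${\cal E}$.

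\emph{Step 3: pulling back.} Since $\hat{\cal L}$ is the pull-back of $\pi$ and $\hat E_{G,\,\dot G}(Q_\delta)\rightarrow E_{G,\,\dot G}(Q_\delta)$, the group $\hat{\cal L}^\circ$ is the pull-back of $\pi$ along $\hat E_{G,\,\dot G}(Q_\delta)^\circ\rightarrow {\cal E}$. The pair consisting of the canonical map $\tilde{\cal L}\rightarrow{\cal L}$ and the composite $\tilde{\cal L}\rightarrow\tilde{\cal L}/Q\cong\hat E_{G,\,\dot G}(Q_\delta)^\circ$ is compatible over ${\cal E}$. The universal property therefore gives a $k^*$-group homomorphism $\tilde{\cal L}\rightarrow\hat{\cal L}^\circ$ lifting ${\rm id}_{\cal L}$. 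A $k^*$-group homomorphism between $k^*$-groups with the same $k^*$-quotient that lifts the identity is automatically bijective, so this is the desired isomorphism; we then invert it.

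\emph{Main obstacle.} The delicate point is Step 1, namely checking that \ref{eq:5.13.1} is a well-defined homomorphism and is compatible with $\hat\theta_\delta$ on the $\varphi$- and $\dot G$-parts. This means verifying that the choice of representative $z$ of $\pi(y)$ and of $u_z$ from the lift $\theta$ to $\hat F_V(Q_{\{1_V\}})$ cancels out modulo $1+J({\cal B}_\delta^Q)$. Here I would use the identification ${\cal A}_\delta=V\otimes_{\cal O}{\cal O}_*\tilde{\cal L}$ and the fact that $(V\otimes {\cal O}_*\tilde{\cal K})^Q$ modulo its radical is $k$.

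Since the paper asserts this verification is easy, I would invoke it and spend care only on the kernel computation. A further wrinkle is the opposite-group convention: the $\circ$ appears because $\hat\theta_\delta$ sends $(x,s_\delta(a))$ to $xa^{-1}$, inverting the scalar. I would check on $\lambda\in k^*$ that both maps send $\lambda$ to the same scalar, so that the identification in Step 2 really is one of $k^*$-groups.
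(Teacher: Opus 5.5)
Your proposal is correct and follows essentially the paper's route. The paper likewise compares the images of the injective map $\hat\theta_\delta$ of Lemma \ref{k*-Hat-Hom} and of \ref{eq:5.13.1} inside $\hat F_{{\cal A},\,\dot G}(Q_\delta)$, factors \ref{eq:5.13.1} through $\hat\theta_\delta$ to obtain $\tilde{\cal L}\rightarrow \hat E_{G,\,\dot G}(Q_\delta)^\circ$ lifting $\pi$, and concludes via the pull-back defining $\hat{\cal L}$. Your reason for why the two images coincide (both contain $k^*$ and map onto the same subgroup $E_{G,\,\dot G}(Q_\delta)$ of $F_{{\cal A},\,\dot G}(Q_\delta)$) is cleaner than the paper's appeal to the proof of Lemma \ref{k*-Hat-Hom}, while your detour through $\tilde{\cal L}/Q$ is harmless but unnecessary, since injectivity of $\hat\theta_\delta$ already lets you factor directly.
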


 \begin{proof}By Lemma \ref{k*-Hat-Hom}, we get an injective $k^*$-group homomorphism $\hat{E}_{G, \, \dot G}(Q_\delta)^\circ\rightarrow \hat F_{{\cal A},\,\dot G}(Q_\delta)$ lifting
the inclusion ${E}_{G, \, \dot G}(Q_\delta)\subset F_{{\cal A},\,\dot G}(Q_\delta)$. Moreover by the proof of Lemma \ref{k*-Hat-Hom}, the image of this $k^*$-group homomorphism coincides with the image of homomorphism \ref{eq:5.13.1}. Then by factoring it through homomorphism \ref{eq:5.13.1}, we get a $k^*$-group homomorphism $\tilde {\cal L}\rightarrow \hat{E}_{G, \, \dot G}(Q_\delta)^\circ$, which clearly lifts $\pi$. The proof is done.
\end{proof}

\begin{thm}\label{Iso-G-ga} Keep the notation as above and assume that $P$ is abelian and that the block $b$ of $H$ is inertial. Then there is a determinant one $P$-interior full matrix algebra $S$ over ${\cal O}$ such that we have a $\dot G$-graded $P$-interior algebra isomorphism (see paragraph \ref{Notation2})
\begin{equation}
({\cal O} G)_\gamma\cong S\otimes_{\cal O} {\cal O}_*\hat {\cal L}^\circ.
\tag*{5.16.1}
\label{eq:5.16.1}
\end{equation}
 Moreover, $S$ has a $P$-stable ${\cal O}$-basis containing the unity of $S$
and $S$ is unique up to $P$-interior algebra isomorphisms.
\end{thm}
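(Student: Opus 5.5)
The plan is to assemble the isomorphism \ref{eq:5.16.1} from the two results just proved. Here $({\cal O} G)_\gamma$ means $i({\cal O} G)i={\cal A}_\gamma$, since $ib=i$.

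\textbf{Step 1: the isomorphism.} Lemma \ref{Gad-Cal-A-ga-p'} gives a $\dot G$-graded $P$-interior algebra isomorphism ${\cal A}_\gamma\cong S\otimes_{\cal O}{\cal O}_*\tilde{\cal L}$. Here $S$ is the determinant one $P$-interior full matrix algebra of paragraph \ref{NotationF5}, obtained by applying Theorem \ref{Gad-Cal-A-ga} to $J=PH$. Lemma \ref{k*-Iso-HatL-TildeL} gives a $k^*$-group isomorphism $\hat{\cal L}^\circ\cong\tilde{\cal L}$ lifting the identity of ${\cal L}$. It therefore induces an ${\cal O}$-algebra isomorphism ${\cal O}_*\hat{\cal L}^\circ\cong{\cal O}_*\tilde{\cal L}$.

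\textbf{Step 2: compatibility of structures.} I must check that this last isomorphism respects the $\dot G$-grading and the $P$-interior structure.
\begin{itemize}
\item \emph{$P$-interior structure.} It lifts the identity on ${\cal L}$, and $P$ is embedded in both $\tilde{\cal L}$ and $\hat{\cal L}$ compatibly (paragraph \ref{Notation6} for $\hat{\cal L}$; the map $\frak i$ of Lemma \ref{Lift-Cal-L-FrM} for $\tilde{\cal L}$). So it is the identity on $P$, at least up to a linear character of $P$. A linear character of the $p$-group $P$ with values in $k^*$ is trivial, so this ambiguity disappears.
\item \emph{Grading.} On ${\cal O}_*\tilde{\cal L}$ the grading comes through \ref{eq:5.7.1} and $\pi$. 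On ${\cal O}_*\hat{\cal L}^\circ$ it comes from ${\cal L}/Q\cong{\cal E}\to\dot G$, as in the discussion following \ref{eq:3.10.1}. Both gradings factor through the same map ${\cal L}\to{\cal E}\to\dot G$ induced by $\pi$, so they agree.
\end{itemize}
Composing the two isomorphisms gives \ref{eq:5.16.1}.

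\textbf{Step 3: properties of $S$.} The basis and uniqueness statements are inherited from paragraph \ref{NotationF5}. There $S$ comes from the source algebra of the inertial block $b$ of $PH$ and is a primitive Dade $P$-algebra. Its existence and uniqueness up to $P$-interior isomorphism follow from \cite{P11} and \cite[Lemma 4.6]{P8} together with the determinant-one normalization of paragraph \ref{p-Cal-A-ga}. A Dade $P$-algebra has a $P$-stable ${\cal O}$-basis containing $1$.

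Uniqueness in the present $G$ also needs an argument, since a second isomorphism might a priori involve a different matrix algebra. Suppose ${\cal A}_\gamma\cong S'\otimes{\cal O}_*\hat{\cal L}^\circ$ with $S'$ of the same type. Restricting to $P$ and to the $1$-component, or cutting by the local point of $P$, yields an isomorphism $S\otimes{\cal O}_*\hat{\cal K}^\circ$-type $\cong S'\otimes(\cdots)$. The uniqueness of the Dade algebra in \cite[Lemma 4.6]{P8}, with determinant one fixing the linear character, should then give $S\cong S'$.

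\textbf{Main obstacle.} I expect the most delicate point to be Step 2, specifically confirming that Lemma \ref{k*-Iso-HatL-TildeL} really matches the gradings and the embedding of $P$. The argument of that lemma factors through \ref{eq:5.13.1}, whose third coordinate $\overline{u_z\otimes\tilde y}$ carries both the $\dot z$-grading and the conjugation action on $Q$. I would verify that $P\subset\tilde{\cal L}$ maps to the canonical copy of $P$ in $\hat{\cal L}$ by comparing their actions on $Q$ and their images in ${\cal E}$; any remaining discrepancy is a $p$-element of $k^*$, hence trivial.
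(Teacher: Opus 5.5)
This is the paper's argument. You compose the isomorphism of Lemma \ref{Gad-Cal-A-ga-p'} with the $k^*$-group isomorphism $\hat{\cal L}^\circ\cong\tilde{\cal L}$ of Lemma \ref{k*-Iso-HatL-TildeL}; your checks that the $\dot G$-grading and the copy of $P$ are respected are details the paper leaves implicit. You then read off the basis and uniqueness of $S$ from paragraph \ref{NotationF5}.

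For the uniqueness reduction, the step the paper uses is to restrict \ref{eq:5.16.1} to the $\dot J$-graded part ${\cal C}_\gamma=i({\cal O}PH)i$. This gives ${\cal C}_\gamma\cong S\otimes_{\cal O}{\cal O}_*(P\rtimes\hat E_J(P_{\tilde\gamma})^\circ)$, to which the uniqueness clause of paragraph \ref{NotationF5} applies directly. Restricting to the $1$-component, as you suggest, only keeps a $Q$-interior structure and so does not fall under that clause as it stands.
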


\begin{proof} By Lemmas \ref{Gad-Cal-A-ga-p'} and \ref{k*-Iso-HatL-TildeL}, we get the isomorphism \ref{eq:5.16.1}. Since the restriction to ${\cal C}_\gamma$ of the isomorphism \ref{eq:5.16.1} induces a $\dot{J}$-graded $P$-interior algebra isomorphism
${\cal C}_\gamma\cong S\otimes_{\cal O} {\cal O}_*(P\rtimes \hat E_J(P_{\tilde\gamma})^\circ)$, the last statement of Theorem \ref{Iso-G-ga} follows from the last statement of paragraph \ref{NotationF5}.
\end{proof}

\begin{order} Proof of Theorem \ref{Main1}.

{\rm We identify ${\cal A}_\gamma$ with $S\otimes_{\cal O} {\cal O}_*\hat{\cal L}^\circ$ through isomorphism \ref{eq:5.16.1}.
Let $V$ be an ${\cal O} P$-module such that ${\rm End}_{\cal O}(V)\cong
S$.
Clearly the $({\cal
A}_\gamma\otimes_{\cal O} {\cal O}_*\hat {\cal L})$-module $V\otimes_{\cal O} {\cal O}_*\hat
{\cal L}^\circ$ induces a Morita equivalence from ${\cal A}_\gamma$
to~${\cal O}_*\hat {\cal L}^\circ$. Since the $({\cal A}\otimes_{\cal O} {\cal
A}_\gamma^\circ)$-module ${\cal A}i$ induces a Morita equivalence
from $\cal A$ to ${\cal A}_\gamma$, the $({\cal A}\otimes_{\cal O}
{\cal O}_*\hat {\cal L})$-module
${\cal A}i\otimes_{{\cal A}_\gamma} (V\otimes_{\cal O} {\cal O}_*\hat {\cal L}^\circ)
\cong {\cal A}i\otimes_{S} V$
 induces a Morita equivalence from  $\cal A$ to~${\cal O}_*\hat {\cal L}^\circ\,.$

Let $\imath$ and $\jmath$ be elements of local points of $P$ and $Q$ on $S$, respectively. By Lemma \ref{Iso-Hat-A-ga}, it is easy to checked that $\imath\otimes 1$ and $\jmath\otimes 1$ are contained in the unique local points of $P$ and $Q$ on $S\otimes_{\cal O} {\cal O}_*\hat{\cal K}^\circ$, respectively. Therefore we can identify $S$ with a subalgebra of ${\cal A}_\gamma$ through  isomorphism \ref{eq:5.16.1} so that $i$ and $j$ are inside $S$. Set $V_\delta = j(V)$ and $S_\delta = {\rm End}_{\cal O} (V_\delta)$. The
$({{{\cal B}}}\otimes_{\cal O} {\cal O}_*\hat {\cal K})$-module
${{{\cal B}}}j\otimes_{{{{\cal B}}}_\delta} (V_\delta\otimes_{\cal O} {\cal O}_*\hat {\cal K}^\circ)
\cong {{{\cal B}}}j\otimes_{S_\delta} V_\delta$ induces a Morita
equivalence from ${{\cal B}}$ to ${\cal O}_*\hat {\cal K}^\circ\,.$

Since we have isomorphism \ref{eq:3.10.1}, analogously with the evident notation, the $({\mathbbm A}\otimes_{\cal O}
{\cal O}_*\hat {\cal L})$-module ${\mathbbm A} {\mathbbm i} \otimes_{{\mathbbm S}}
{\mathbbm V}$ induces a Morita equivalence from ${\mathbbm A}$
to~${\cal O}_*\hat {\cal L}^\circ$, whereas the $({{\mathbbm B}}\otimes_{\cal O}
{\cal O}_*\hat {\cal K})$-module ${{\mathbbm B}}{\mathbbm j} \otimes_{{\mathbbm S}_{\bbdelta}} {\mathbbm V}_{\bbdelta}$
 induces a Morita equivalence from  ${\mathbbm B}$ to~${\cal O}_*\hat {\cal K}^\circ\,.$

 \smallskip
 Consequently, the ${\cal O} (G\times {\mathbbm G})$-module
 $D= ({\cal A}i\otimes_{S} V)\otimes_{{\cal O}_*\hat {\cal L}^\circ}
  ({\mathbbm V}^\circ \otimes_{{\mathbbm S}} \i {\mathbbm A})$
 induces a Morita equivalence from  $\cal A$ to~${\mathbbm A}\,,$
 whereas the ${\cal O} (H\times {\mathbbm H})$-module
 $M = ({{{\cal B}}}j\otimes_{S_\delta} V_\delta)\otimes_{{\cal O}_*\hat {\cal K}^\circ}
 ({\mathbbm V}_{\bbdelta}^\circ \otimes_{{\mathbbm S}_\delta} \j {{\mathbbm B}})$
induces a Morita equivalence from  ${{\cal B}}$ to~${\mathbbm B}\,.$

 \smallskip
 Moreover, since we have the obvious inclusions
 ${{{\cal B}}}j\subset {\cal A}i$, $S_\delta \subset S$ and
 $V_\delta \subset V$,
it is easily checked that we have
\begin{equation}
{{{\cal B}}}j\otimes_{S_\delta} V_\delta\cong {{{\cal B}}}i\otimes_{S} V\subset {\cal A}i\otimes_{S} V;
\tag*{5.17.1}
\label{eq:5.17.1}
\end{equation}
 in particular, we have an evident section
${\cal A}i\otimes_{S} V\longrightarrow {{{\cal B}}}j\otimes_{S_\delta} V_\delta$
which is actually a $({{{\cal B}}}\otimes_{\cal O} {\cal O}_*\hat {\cal K})$-module
homomorphism. Similarly, we have a split $({{\mathbbm B}} \otimes_{\cal O}
{\cal O}_*\hat {\cal K})$-module monomorphism
\begin{equation}
{{\mathbbm B}}\mathbbm{j} \otimes_{{\mathbbm S}_\delta} {\mathbbm V}_\delta\longrightarrow
{\mathbbm A} \mathbbm{i} \otimes_{{\mathbbm S}} {\mathbbm V} .
\tag*{5.17.2}
\label{eq:5.17.2}
\end{equation}

 \smallskip
 In conclusion, the $({{{\cal B}}}\otimes_{\cal O} {\cal O}_*\hat {\cal K})$- and
 $({{\mathbbm B}}\otimes_{\cal O} {\cal O}_*\hat {\cal K})$-module homomorphisms~ \ref{eq:5.17.1} and~ \ref{eq:5.17.2},
 together with the inclusion ${\cal O}_*\hat {\cal K}\subset {\cal O} \hat {\cal L}\,,$ determine
  an  ${\cal O} (H\times {\mathbbm H})$-module homomorphism
 \begin{equation}
M\longrightarrow {\rm Res}_{H\times {\mathbbm H}}^{G\times {\mathbbm G}} (D)
\tag*{5.17.3}
\label{eq:5.17.3}
\end{equation}
 which actually admits a section too. Now
 we claim that the product by $K$ stabilizes the image of $M$ in $D\,,$ so that
 $M$ can be extended to an ${\cal O} K$-module.

 \smallskip
 It suffices to prove that the image of $M$ is stabilized by the multiplication
 by $\Delta (N_G(Q_\delta))\,.$ Given $x\in N_G(Q_\delta)$, there are
some invertible elements $a_x\in ({\cal O} H)^Q$ and  $b_x\in ({\cal O}
{\mathbbm H})^Q$ such that
$xjx^{-1} = a_xj a_x^{-1}$ and $x {\mathbbm j} x^{-1} = b_x {\mathbbm j}  b_x^{-1}$
and therefore $a_x^{-1}x$ and $b_x^{-1}x$ respectively centralize
$j$ and ${\mathbbm j} \,,$ so that $a_x^{-1}xj$ and $b_x^{-1}x {\mathbbm j} $
respectively belong to ${\cal A}_\delta$ and to ${\mathbbm A}_{\bbdelta};$ but, according to isomorphisms ~\ref{eq:5.16.1} and \ref{eq:3.10.1}, we
have $\dot{G}$-graded $Q$-interior algebra isomorphisms
${\cal A}_\delta\cong S_\delta\otimes_{\cal O} {\cal O}_*\hat {\cal L}^\circ$ and $
{\mathbbm A}_{\bbdelta}\cong {\mathbbm S}_{\bbdelta}\otimes_{\cal O} {\cal O}_*\hat
{\cal L}^\circ
$
where we are setting ${\mathbbm S}_{\bbdelta} =
{\mathbbm j} {\mathbbm S}{\mathbbm j} \,.$

\smallskip
Hence, identifying  with each other both members of these
isomorphisms and modifying if necessary our choice of $a_x\,,$ for
some $s_x\in S_\delta\,,$ $t_x\in {\mathbbm S}_{\bbdelta}$ and $\hat
y_x\in \hat {\cal L}\,,$
 we get
$a_x^{-1}x j = s_x\otimes \hat y_x$ and $b_x^{-1}x {\mathbbm j}  = t_x\otimes \hat y_x.$ Thus, setting ${\mathbbm V}_\delta = {\mathbbm j} ({\mathbbm V})\,,$
for any $a\in {{{\cal B}}}j\,,$ any $d\in {{\mathbbm B}}{\mathbbm j} \,,$ any
$v\in V_\delta$ and any $w\in {\mathbbm V}_{\bbdelta}\,,$ in $D$ we have
\begin{eqnarray*}(x,x)\!\!\!\!&\cdot&\!\!\!\!(a\otimes v)\otimes (w\otimes d)
= (x a\otimes v)\otimes (w\otimes dx^{-1})\\
&=& (x ax^{-1}a_x (a_x^{-1}xj)\otimes v)\otimes
(w\otimes ({\mathbbm j}  x^{-1}b_x)b_x^{-1}xdx^{-1})\\
&=&(x ax^{-1}a_x \otimes s_x\cdot v)\cdot \hat y_x\otimes
\hat y_x^{-1}\cdot(w\cdot t_x^{-1}\otimes b_x^{-1}xdx^{-1})\\
&=&(x ax^{-1}a_x \otimes s_x\cdot v) \otimes (w\cdot t_x^{-1}\otimes
b_x^{-1}xdx^{-1});
\end{eqnarray*}
since $x ax^{-1}a_x $ and $b_x^{-1}xdx^{-1}$ respectively belong to
${{{\cal B}}}j$ and ${\mathbbm j} {{\mathbbm B}}\,,$ this proves our claim.

\smallskip
Finally, since homomorphism~\ref{eq:5.17.3} actually becomes an ${\cal O}
K$-module homomorphism, it induces an ${\cal O} (G\times {\mathbbm G})$-module
homomorphism
${\rm Ind}_K^{G\times {\mathbbm G}}(M)\longrightarrow D$,
which is actually an isomorphism as it is easily checked.\qed}
\end{order}

\begin{order} Proof of Corollary \ref{Main2}.

{\rm
Assume that $P$ is abelian and that  the block $b$  is inertial as a block of $PH$.
Let $N$ be a normal subgroup of $G$ containing $H$ such that $G=PN$ and $N/H$ is a $p'$-group.

Since $N/H$ and $N_H(Q_{\delta})/C_H(Q)$ are both $p'$-groups, so is
$N_N(Q_{\delta})/C_H(Q)$. Every block of $N_N(Q_{\delta})$ covering $b_{\delta}$ has $Q$ as a defect group. Set $C_N(Q_{\delta})=N_N(Q_{\delta})\cap C_N(Q)$.
Then $C_N(Q_{\delta})/C_H(Q)$ is a $p'$-group.
So all blocks of $C_N(Q_{\delta})$ covering $b_{\delta}$ have $Q$ as defect group and are nilpotent.
Since $P$ is abelian, we have $C_G(Q_{\delta})=PC_N(Q_{\delta})$.
By \cite[Theorem 8.12.2]{L}, every block of $C_G(Q_{\delta})$ covering $b_{\delta}$ is also nilpotent.
Since $C_G(Q_{\delta})$ is normal in $N_G(Q_{\delta})$ and $N_G(Q_{\delta})/C_G(Q_{\delta})\cong N_N(Q_{\delta})/C_N(Q_{\delta})$ is a $p'$-group,
by \cite[Corollary]{Z2}, every block of $N_G(Q_{\delta})$ covering $b_{\delta}$ is inertial, thus every block of $N_G(Q)$ covering $b_{\delta}$ is inertial.
For any  block $c$ of $G$ covering $b$,  let ${\mathbbm c}$ be the corresponding block of $N_G(Q)$ such that the the block algebra ${\cal O} Gc$ and ${\cal O} N_G(Q)\mathbbm c$
are  basically Morita equivalent. Clearly $\mathbbm c$ covers $b_{\delta}$.
So $\mathbbm c$ is inertial and so is $c$.\qed}
\end{order}

\end{document}